
\documentclass{eectOF}
\usepackage{amsmath}
\usepackage{amssymb}
  \usepackage{paralist}
  \usepackage{graphics} 
  \usepackage{epsfig} 
\usepackage{graphicx}  \usepackage{epstopdf}
 \usepackage[colorlinks=true]{hyperref}
\hypersetup{urlcolor=blue, citecolor=red}

  \textheight=8.2 true in
   \textwidth=5.0 true in
    \topmargin 30pt
     \setcounter{page}{1}


\usepackage{bigints}
\usepackage{empheq}
\allowdisplaybreaks
\usepackage{dsfont}
\usepackage{amssymb}
\usepackage{amsfonts}
\usepackage{amsmath}
\usepackage{graphicx}
\usepackage{shadow}
\usepackage{color}
\usepackage{tcolorbox}
\usepackage[all]{xy}
\usepackage{tikz}
\usepackage{graphicx}
\usepackage{enumitem}
\usepackage{cases}

\newtheorem{theorem}{Theorem}[section]
\newtheorem{corollary}{Corollary}

\newtheorem{lemma}[theorem]{Lemma}
\newtheorem{proposition}{Proposition}

\theoremstyle{definition}
\newtheorem{definition}[theorem]{Definition}
\newtheorem{remark}{Remark}

\def \dv {\mathrm{div}}

\title[Stackelberg-Nash null controllability] 
      {Stackelberg-Nash null controllability of heat equation with general  dynamic\\ boundary conditions}

\author[Idriss Boutaayamou,  Lahcen Maniar and Omar Oukdach]{}

\subjclass{Primary: 91A65; Secondary: 93B05.}
 \keywords{Stackelberg-Nash strategies, Carleman estimates, observability inequality, heat equation with dynamic boundary conditions.}

 \email{dsboutaayamou@gmail.com}
 \email{maniar@uca.ma}
 \email{omar.oukdach@gmail.com}


\thanks{$^*$ Corresponding author: Idriss Boutaayamou}

\begin{document}
\maketitle

\centerline{\scshape Idriss Boutaayamou$^*$}

\medskip
{\footnotesize
	
	\centerline{  Facult\'e Polydisciplinaire de Ouarzazate,}
	\centerline{ Universit\'e Ibn Zohr, Ouarzazate,  B.P. 638, 45000, Morocco }
} 

\bigskip
\centerline{\scshape Lahcen Maniar and Omar Oukdach}

\medskip
{\footnotesize
   \centerline{Cadi Ayyad University, Faculty of Sciences Semlalia}
   \centerline{	LMDP, UMMISCO (IRD-UPMC) B.P. 2390, Marrakesh, Morocco}
} 

\bigskip

 \centerline{(Communicated by Genni Fragnelli)}

\begin{abstract}
	This paper deals with the hierarchical control of the anisotropic heat equation  with dynamic boundary conditions and drift terms. We use the Stackelberg-Nash strategy with one leader and two followers. To each fixed leader, we find  a Nash equilibrium corresponding to a bi-objective optimal control problem for the followers. Then, by some new Carleman estimates, we prove a null controllability result.
\end{abstract}
\section{Introduction}

A control  problem consists of finding a control that steers the system under consideration from an initial state to a fixed target. In many situations, the resolution process leads to a single-objective optimal control problem, and  generally, under some appropriate conditions, the existence and uniqueness of  optimal control problems can be proved.
\par The modeling of  many  industrial and economic complex problems reveals   several criteria to optimize simultaneously. For example,
in the heat transfer, in a room, it is meaningful to try to guide the temperature
to be close as much as possible to a fixed target  at the end of the day and, additionally,
keep the temperature not too far from a prescribed value at some
regions. This can be done by applying several controls at different locations of the
room. This problem can be seen as a game with controls as players.

\par To solve a  multi-objective optimal control problem, there are several strategies  to choose the best controls, depending on the character of the problem. Among these strategies, let us mention the cooperative strategy proposed by  Pareto  \cite{Na51}, the Nash non-cooperative strategy  \cite{Pa96},  and the Stackelberg hierarchical strategy \cite{St34}.

In the context of the control of evolution equations, a relevant question is whether
one can  steer the system to the desired state exactly or approximately by
controls  corresponding to one of these strategies. Up to now, there are
several papers related to this topic. In the seminal papers \cite{LiHy} and \cite{LiPa}, J. L. Lions  introduced and studied  the Stackelberg strategy  for the hyperbolic and parabolic equations.  Later on, the authors in  \cite{D02} and \cite{DL04},  studied the existence and uniqueness of Stackelberg-Nash equilibrium, as well as its characterization. In these works, the followers and the leader have  an approximate controllability objective. Then again, from a theoretical and numerical point of view, the papers  \cite{CF18, FACC18, GRP02, GRP01} dealt with the above questions, for parabolic equations and the Burgers equation, respectively.  We refer also to \cite{LCM09}, where the authors obtained  results of hierarchical control for parabolic equations with moving boundaries. Eventually, let  us mention that the Stackelberg-Nash strategy for  Stokes systems has been studied
in \cite{GMR13}.
We emphasize that   all the above results  deal with the hierarchical control for the evolution equation only in the case of approximate controllability. In \cite{ArCaSa15}, the authors developed the first hierarchical results in the context of  the  controllability to trajectories. These results were recently improved in \cite{ArFeGu17} by imposing some weak conditions on observation domains for the followers. The same idea is also applied for the wave equation and  the degenerate parabolic equations in the papers \cite{FACC18}  and \cite{AAF}, respectively. More recently, in \cite{AFS},  the authors dealt with the  hierarchical    exact controllability of  parabolic
equations with distributed and boundary controls. The same method was also applied in the context of parabolic coupled systems in  \cite{HTP16} and \cite{HSP18} and mixed with robust control in \cite{Luz18} and \cite{Mophou1}.  The previous works have considered Dirichlet and Neumann boundary conditions.

In this paper, we are interested in developing the Stackelberg-Nash strategy for a parabolic equation with dynamic boundary conditions.
This type of boundary conditions has been considered in the context of null controllability, when only one control is acting on the system, in  \cite{KhMa19, MMS17}. The cost of approximate controllability and an inverse source problem of such equations  were also studied in \cite{BCMO20} and \cite{ACMO20}, respectively. For some controllability results of hyperbolic equation with dynamic boundary condition, the reader can see \cite{CiGa'17}. We refer  to \cite{Gol06} for a derivation and physical interpretation of such boundary conditions, and to \cite{CM09,  GMS10, KhMaMaGh19, MiZi05, B10, C2, B11} for  the study of the  existence, the uniqueness and the regularity of their solutions. Note also
that the  controllability problems of evolution equations is extensively studied in the case of static boundary conditions in  \cite{AFHM11,BGC06,BFM18,CF07, PWZ07} and reference therein. It is worth mentioning that the presence of dynamic boundary conditions, in evolution problems, creates a transmission problem between the dynamics in the domain and on the boundary, also appear in various interesting physical models and motivated by problems occurring diffusion, reaction-diffusion, and phase-transition phenomena, special flows in hydrodynamics, models in climatology, and so on. We refer to \cite{Gol06} for a derivation and physical interpretation of such boundary conditions.

Our main results rely on a new Carleman estimate of a coupled system. In our case, the first equation is a direct heat equation those the second term is in $\mathbb{L}^{2}$ and an adjoint equation with  terms in a Sobolev space of negative order $\mathbb{H}^{-1}$.  To our knowledge, our paper is the first to deal with Carleman estimates for a coupled system with  terms in $\mathbb{H}^{-1}$, especially in the context of hierarchical control of evolution equations.

This paper is organized as follows. In Section \ref{S.1},  we   formulate the problem under consideration and state the main result. In Section \ref{S.2}, we introduce  some needed functional spaces and recall some previous results on the well-posedness. Section \ref{S.3} deals with the proof of existence and characterization of Nash equilibrium. In  Section \ref{S.4}, we prove some suitable Carleman estimates and deduce our null controllability result in the linear case. The semilinear case is considered in Section \ref{sectionsemli}.


\section{Problem and its formulation}\label{S.1}
Let  $\Omega$ be a bounded open set of $\mathbb{R}^N$, $N \geq 2$, with smooth boundary $\Gamma :=\partial\Omega$. Let    $\omega$, $\omega_1$ and $\omega_2$ be  arbitrary nonempty open subsets  strictly contained in $\Omega$. Given $T>0$, we denote   $\Omega_T:=\Omega\times(0, T)$ and  $\Gamma_T:=\Gamma\times(0, T)$.
Consider the heat equation with dynamic boundary conditions
\begin{equation}\label{Sys1.1SN}
\left\{\begin{array}{ll}
{\partial_t y-\mathrm{div}(\mathcal{A}\nabla y)  +B(x,t)\cdot \nabla y+ a(x,t)y= f1_{\omega}+ v_1 1_{\omega_1}+ v_2 1_{\omega_2}}& {\text { in } \Omega_{T},} \\
{\partial_t y_{_{\Gamma}}   - \dv_{\Gamma}(\mathcal{A}_{\Gamma}\nabla_{\Gamma} y_{\Gamma})    + \partial_{\nu}^{\mathcal{A}}y+B_{\Gamma}(x, t)\cdot \nabla_{\Gamma} y_{\Gamma} + b(x,t)y_{\Gamma}=0}& {\text { on } \Gamma_{T},} \\
{(y(0),y_{\Gamma}(0)) =(y_0, y_{\Gamma,0})},& {\text { in } \Omega \times \Gamma}.
\end{array}\right.
\end{equation}	
Here, $y_{\Gamma}$ denotes the trace of $y$ on $\Gamma$, $y_0 \in L^2(\Omega)$ and $y_{\Gamma, 0} \in  L^2(\Gamma)$ the initial data
in the domain and  on the
boundary, respectively. We emphasize that $y_{\Gamma,0}$
is not necessarily the trace of $y_{0}$, since we do not assume that $y_0$ has a trace, but if $y_0$ has a
well-defined trace on $\Gamma$, then the trace must coincide with  $y_{\Gamma, 0}$, and
$\nu$ is the outer unit normal field, $  \partial^{\mathcal{A}}_{\nu}y:= (\mathcal{A}\nabla y\cdot\nu)_{|_{\Gamma}}$ is the
co-normal derivative at $\Gamma$.
The functions   $a$, $b$, $B$ and $B_\Gamma$  belong to   $L^{\infty}(\Omega_T)$, $L^{\infty}(\Gamma_T)$, $L^{\infty}(\Omega_T)^N$ and  $L^{\infty}(\Gamma_T)^N$, respectively;  $1_\omega$, $1_{\omega_1}$ and $1_{\omega_2}$  are the  characteristic functions of $\omega$, $\omega_1$ and $\omega_2$, respectively, and $f$, $v_1$ and $v_2$ are the control functions which act on the system through the subsets $\omega$, $\omega_1$ and $\omega_2$, respectively. The boundary $\Gamma$ of $\Omega$ is considered to be a $(N -1)$-dimensional compact Riemannian submanifold equipped by  the Riemannian metric $g$ induced by the natural embedding $\Gamma \subset \mathbb{R}^N$. Let $(x_i)$ be the natural coordinate system, we denote ($\frac{\partial}{\partial x_i}$) the corresponding tangent vector field and $g_{i j}=\langle \frac{\partial}{\partial x_i}, \frac{\partial}{\partial x_j}\rangle$. We define the inner product and the norm on the tangent space by
$$g(X, Y)=\langle X, Y\rangle_{\Gamma}=\sum\limits_{i j=1}^{N-1} g_{i j}(x) \alpha_{i} \beta_{j}\,\, \text{and}\,\,
|X|_{g}=\langle X, X\rangle_{\Gamma}^{\frac{1}{2}}
$$
for all  $\displaystyle X=\sum\limits_{i=1}^{N-1} \alpha_{i} \frac{\partial}{\partial x_{i}}$ and  $\displaystyle Y=\sum\limits_{i=1}^{N-1} \beta_{i} \frac{\partial}{\partial x_{i}}$ in the tangent space and we simply denote $X\cdot Y= \langle X, Y\rangle_{\Gamma}$. For any smooth function $y$ on $\Gamma$, the tangential gradient  of $y$ on $\Gamma$ is defined as
$\displaystyle\nabla_{\Gamma} y=\sum_{i, j=1}^{N-1} g^{i j} \frac{\partial y}{\partial x_{j}} \frac{\partial}{\partial x_{i}},$
where the matrix $\hat{g}=(g^{i j})$ is the inverse of $g=(g_{i j})$, see \cite{ LaTrYa99} and  \cite{C2}   for more detail. We also recall, from \cite{KhMa19}, the following weak definitions of the  divergence operator $\dv(\cdot)$ and the  tangential divergence operator $\dv_{\Gamma}(\cdot)$.
For $F\in L^2(\Omega)$ and  $F_{\Gamma}\in L^2(\Gamma)$
$$ \dv(F): H^1(\Omega) \longrightarrow \mathbb{R},\quad u\longmapsto -\int_{\Omega} F\cdot\nabla u\, dx + \langle F\cdot \nu,u_{|\Gamma}\rangle_{H^{-\frac{1}{2}}(\Gamma),H^{\frac{1}{2}}(\Gamma)},$$
$$ \dv_{\Gamma}(F_{\Gamma}): H^1(\Gamma) \longrightarrow \mathbb{R},\quad u_{\Gamma}\longmapsto -\int_{\Gamma} F_{\Gamma}\cdot\nabla _{\Gamma} u_{\Gamma}\, d\sigma.$$
Here, $d\sigma$ is the natural surface measure on $\Gamma$.    Viewed as linear forms on $H^1(\Omega)$ and
$H^1(\Gamma)$, $\dv(F)$ and  $\dv_{\Gamma}(F_{\Gamma})$ are   continuous. In particular, we have the following estimates.
\begin{align*}
|\langle \dv(F), u\rangle| \leq C_1  \|F\|_{L^2{(\Omega)}}\|u\|_{H^{1}(\Omega)} \quad F\in L^2(\Omega), u\in H^{1}(\Omega),\\
|\langle \dv_{\Gamma}(F_{\Gamma}), u_{\Gamma}\rangle| \leq C_2 \|F_{\Gamma}\|_{L^2{(\Gamma)}}\|u_{\Gamma}\|_{H^{1}(\Gamma)} \quad F_{\Gamma}\in L^2(\Gamma), u_{\Gamma}\in H^{1}(\Gamma),
\end{align*}
where  $C_1$ and $C_2$ are  positive constants. For $F_{\Gamma} =\nabla_{\Gamma}u_{\Gamma}, u_{\Gamma}\in H^1(\Gamma)$, we define the Laplace-Beltrami operator $\Delta_\Gamma$ as follows
\begin{equation*}
\Delta_\Gamma u_{\Gamma}= \dv_{\Gamma}(\nabla_{\Gamma}u_{\Gamma}).
\end{equation*}
Throughout this paper, we assume that the matrices $\mathcal{A}$ and $\mathcal{A}_{\Gamma}$ satisfy the following assumptions.
\begin{enumerate}
	\item [(i)] $\mathcal{A}(\cdot)=(c(\cdot)_{i,j})\in C^1(\overline{\Omega}, \mathbb{R}^{N\times N})$ and $\mathcal{A}_{\Gamma}(\cdot)=(c_{_{\Gamma}}(\cdot)_{i,j})\in C^1(\Gamma, \mathbb{R}^{N\times N})$ are  symmetric, i.e., $c(x)_{i,j}=c(x)_{j,i}$ and $c_{_{\Gamma}}(x)_{i,j}=c_{_{\Gamma}}(x)_{j,i}.$
	\item [(ii)]  $\mathcal{A}(\cdot)$ and $\mathcal{A}_{\Gamma}(\cdot)$ are uniformly elliptic, in particular there exist constants  $\alpha>0 $ and $\alpha_{_{\Gamma}}>0 $  such that
	\begin{equation}\label{el1.2}
	\langle \mathcal{A}(x)\zeta, \zeta\rangle\geq	\alpha|\zeta|^2, \quad \text{and}\,\,\langle \mathcal{A}_{\Gamma}(x_{_{\Gamma}})\zeta, \zeta\rangle\geq	\alpha_{_{\Gamma}}|\zeta|^2
	\end{equation}
	for each $x\in \overline{\Omega}$, $x_{_{\Gamma}}\in \Gamma$, and  $\zeta\in\mathbb{R}^N.$
	
\end{enumerate}
\par This paper deals with Stackelberg-Nash strategy for the null controllability of the above heat equation with dynamic boundary conditions. To be more specific,  we introduce, for $i=1,2$, the non-empty open sets $\omega_{i,d}\subset \Omega$, representing the observation domains
of the followers, and the fixed target functions $y_{i,d}\in L^2(0,T;L^2( \omega_{i,d}))$. Let us define the following main cost functional
\begin{equation}
J(f)=\frac{1}{2}\int_{\omega\times (0,T)} |f|^2dx\, dt,
\end{equation}
and the secondary cost functional
\begin{equation}\label{Ji.12}
J_i(f;v_1,v_2)=\frac{\alpha_i}{2}\int_{\omega_{i,d}\times (0,T)} |y-y_{i,d}|^2dx\, dt + \frac{\mu_i}{2}\int_{\omega_{i}\times (0,T)} |v_i|^2dx\, dt,\quad i=1,2,
\end{equation}
where $\alpha_i$, $\mu_i$  are positive constants and $Y(f,v_1,v_2)=(y(f,v_1,v_2),y_{\Gamma}(f,v_1,v_2))$ is the solution to \eqref{Sys1.1SN}. For a fixed $f\in L^2(0,T; L^2(\omega))$, the pair $(v^{\star}_1,v^{\star}_2)=(v^{\star}_1(f),v^{\star}_2(f))$ is called a Nash equilibrium for $(J_1,J_2)$ associated to $f$  if
\begin{equation}
\left\{\begin{array}{ll}
{J_1(f; v^{\star}_1,v^{\star}_2)\leq J_1(f; v,v^{\star}_2) \quad \forall \, v\in  L^2(0,T;L^2( \omega_1))}, \\
{J_2(f; v^{\star}_1,v^{\star}_2)\leq J_2(f; v^{\star}_1,w) \quad \forall \, w\in  L^2(0,T;L^2( \omega_2)),}
\end{array}\right.
\end{equation}
or equivalently
\begin{equation}
\left\{\begin{array}{ll}
{J_1(f; v^{\star}_1,v^{\star}_2)= \min\limits_{v\in  L^2(0,T;L^2( \omega_1))}J_1(f; v,v^{\star}_2),} \\
{J_2(f; v^{\star}_1,v^{\star}_2)= \min\limits_{v\in  L^2(0,T;L^2( \omega_2))}J_2(f; v^{\star}_1,v).}
\end{array}\right.
\end{equation}
Since the functionals $J_i$  $(i=1,2)$, are differentiable and  convex, then, by well-known characterization results, see, e.g. Theorem 3.8 of
\cite{Jahn06} or \cite{Ek'74}, the pair $(v^{\star}_1,v^{\star}_2)=(v^{\star}_1(f),v^{\star}_2(f))$ is a Nash equilibrium for $(J_1,J_2)$ associated to $f$  if and only if
\begin{equation}\label{NE7}
\left\{\begin{array}{ll}
{\frac{\partial J_1}{\partial v_1}(f; v^{\star}_1,v^{\star}_2)(v)=0 \quad \forall \, v\in  L^2(0,T;L^2( \omega_1)),} \\
{\frac{\partial J_2}{\partial v_2}(f; v^{\star}_1,v^{\star}_2)(w)=0 \quad \forall \, w\in  L^2(0,T;L^2( \omega_2)).}
\end{array}\right.
\end{equation}

\par Our goal is to prove that, for any initial data  $Y_0=(y_0, y_{0,\Gamma})\in \mathbb{L}^2= L^2(\Omega)\times L^2(\Gamma)$,  there exist a control  $f\in  L^2(0,T;L^2( \omega))$ with minimal norm (called leader) and an associated Nash equilibrium   $(v^{\star}_1,v^{\star}_2)=(v^{\star}_1(f),v^{\star}_2(f)) \in   L^2(0,T;L^2( \omega_1))\times L^2(0,T;L^2( \omega_2))$ (called followers) such that the associated state $Y(Y_0, f, v^{\star}_1,v^{\star}_2)$ of \eqref{Sys1.1SN} satisfies $Y(f, v^{\star}_1,v^{\star}_2)(T)=0$. To do this, we shall follow  the Stackelberg-Nash strategy: for each choice of the leader $f$, we look for a Nash equilibrium pair for the cost functionals $J_i$  ($i = 1, 2$); which means finding the  controls $v^{\star}_1(f)\in L^2(0, T; L^2(\omega_1)) $ and $v^{\star}_2(f)\in L^2(0, T; L^2(\omega_2)) $, depending on $f$, satisfying  \eqref{NE7}. Once the Nash equilibrium has been identified and fixed for each $f$, we look for a control $\hat{f}$ such that
\begin{equation}\label{eq1.6}
J(\hat{f})= \min_{f\in L^2(0,T;L^2(\omega))} J(f, v^{\star}_1(f), v^{\star}_2(f)),
\end{equation}
subject to  the null controllability constraint
\begin{equation}\label{eq1.7}
Y(T, \hat{f}, v^{\star}_1(\hat{f}), v^{\star}_2(\hat{f}))(T)=0.
\end{equation}
Assume that the control regions  satisfy the following assumption
\begin{equation}\label{Assump10}
\omega_{d}=\omega_{1,d}=\omega_{2,d}, \quad \text{and }\,\omega_d\cap \omega\neq\emptyset.
\end{equation}
The main result in this paper is the following.

\begin{theorem}\label{th4.1SN}
	Let us assume that \eqref{Assump10} holds and $\mu_i>0$, $i=1,2$,  are sufficiently large. There exists a positive  weight function $\rho =\rho(t)$ blowing up at $t=T$ such that for every $y_{i,d}\in L^2(0,T; L^2(\omega_i))$ satisfying
	\begin{equation}\label{Ass11SN}
	\int_{\omega_{i, d}\times(0, T)}\rho^2 y_{i,d}^2 dx  \,dt < \infty,\quad i=1,2,
	\end{equation}
	and  every $Y_0=(y_0,y_{\Gamma, 0} )\in\mathbb{L}^2$,  there  exist a control $\hat{f}\in L^2(0,T;L^2(\omega))$ with minimal norm and an associated Nash equilibrium $(v^{\star}_1,v^{\star}_2)\in L^2(0,T;L^2(\omega_1))\times L^2(0,T;L^2(\omega_2))$
	such that the corresponding solution to \eqref{Sys1.1SN}   satisfies  $Y(T, \hat{f}, v^{\star}_1, v^{\star}_2)=0$.
\end{theorem}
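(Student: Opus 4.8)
The plan is to follow the Stackelberg--Nash scheme in which the leader's null controllability is the outer problem and the followers' Nash equilibrium is the inner problem, and to close it with a new Carleman estimate.

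\emph{Existence and characterization of the Nash equilibrium.} For a fixed leader $f$, the map $(v_1,v_2)\mapsto Y(f,v_1,v_2)$ is affine, so each secondary functional $J_i$ in \eqref{Ji.12} is convex; taking $\mu_i$ large forces strict convexity and coercivity in the $i$-th variable, which I would use to obtain existence and uniqueness of the equilibrium $(v_1^\star,v_2^\star)$. To exploit \eqref{NE7}, I would introduce, for $i=1,2$, adjoint states $(\varphi^i,\varphi_\Gamma^i)$ solving a backward problem with dynamic boundary conditions, source $\alpha_i(y-y_{i,d})1_{\omega_d}$ and $\varphi^i(T)=0$. The optimality conditions then read $v_i^\star=-\mu_i^{-1}\varphi^i1_{\omega_i}$, and inserting this into \eqref{Sys1.1SN} yields the optimality system coupling the forward state $y$ (with source $f1_\omega-\sum_i\mu_i^{-1}\varphi^i1_{\omega_i}$) to the two backward adjoints.

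\emph{Reduction to observability.} By duality, finding the minimal-norm leader $\hat f$ of \eqref{eq1.6} subject to \eqref{eq1.7} is equivalent to a weighted observability inequality for the adjoint of the optimality system. This adjoint is again a coupled forward--backward system: one equation is a \emph{direct} heat equation whose right-hand side (a localized multiple of the dual state) lies in $\mathbb{L}^2$, while the companion \emph{adjoint} equation carries a right-hand side only in $\mathbb{H}^{-1}$ --- this is forced by the fact that the drift coefficients $B,B_\Gamma$ are merely $L^\infty$, so the transposed first-order terms cannot be written in non-divergence form. The goal is an estimate controlling the initial energy of the dual state by a weighted local integral over $\omega$, with weights built from the blow-up function $\rho$.

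\emph{The Carleman estimate (the main obstacle).} This is the heart of the proof and the genuinely new point. I would construct Fursikov--Imanuvilov weights compatible with the transmission structure of the dynamic boundary conditions, following \cite{KhMa19,MMS17}, and prove a global Carleman inequality for the direct equation and for the adjoint equation separately. The crucial difficulty is to absorb the $\mathbb{H}^{-1}$ right-hand side: I would represent it as $\dv$ (and $\dv_{\Gamma}$) of $\mathbb{L}^2$ vector fields, integrate by parts against the weighted test functions, and trade the lost derivative for the extra powers of the large Carleman parameters on the principal side. The coupling terms between the two equations are then absorbed using the largeness of $\mu_i$, and the localized integral naturally produced on $\omega_d$ is transferred to the leader's domain $\omega$ via \eqref{Assump10}, i.e. $\omega_d\cap\omega\ne\emptyset$. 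Adding the two estimates and simplifying the weights yields the desired observability inequality.

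\emph{Conclusion.} With observability established, I would minimize a penalized weighted functional whose coercivity follows from that inequality; the integrability hypothesis \eqref{Ass11SN} is exactly what keeps the contribution of the targets $y_{i,d}$ finite in the weighted duality. The minimizer delivers the minimal-norm leader $\hat f$ together with the associated Nash equilibrium $(v_1^\star,v_2^\star)$, and the corresponding solution of \eqref{Sys1.1SN} satisfies $Y(T,\hat f,v_1^\star,v_2^\star)=0$. I expect the decisive obstacle to be the Carleman step above, where the negative-order source terms coming from the $L^\infty$ drift must be reconciled with the boundary--interior transmission coupling.
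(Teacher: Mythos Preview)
Your plan tracks the paper's architecture closely: Nash equilibrium via adjoint characterization, the optimality system, observability of its dual via a coupled Carleman estimate, and a penalized HUM minimization. The identification of the $\mathbb{H}^{-1}$ source (from the $L^\infty$ drift in divergence form) as the new analytic point is also correct.

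There is, however, one genuine gap in the Carleman step. After you add the two individual Carleman inequalities (for $Z$ and for the combination $H=\alpha_1\Psi^1+\alpha_2\Psi^2$), you are left with \emph{two} local terms on $\omega'\subset\omega\cap\omega_d$: one in $|z|^2$ and one in $|h|^2$. The local $|h|^2$ term is \emph{not} absorbed by taking $\mu_i$ large, nor is it merely ``transferred'' from $\omega_d$ to $\omega$; neither mechanism works here. What the paper does---and what your outline omits---is to exploit the equation for $z$ on the observation set: since $\omega'\subset\omega_d$, one has $h=-\partial_t z-\dv(\mathcal{A}\nabla z)-\dv(Bz)+az$ there, so $\int_{\omega'}\theta\,\xi^3 e^{-2s\alpha}|h|^2=\int_{\omega'}\theta\,\xi^3 e^{-2s\alpha}h\,(L^\ast z)$ with a cutoff $\theta$ supported in $\omega_1'\subset\subset\omega\cap\omega_d$. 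Integrating by parts and using the equation satisfied by $h$ converts this into terms controlled by $\int_{\omega}\xi^7 e^{-2s\alpha}|z|^2$ plus small multiples of $\mathcal I(s,\lambda;H)$, which are then absorbed on the left. This substitution-and-integration-by-parts trick is exactly where assumption \eqref{Assump10} is used, and it is the decisive step you should make explicit.

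A secondary imprecision: the largeness of $\mu_i$ is not what closes the raw Carleman estimate (the large parameter $s$ does that). In the paper, $\mu_i$ large is used in two other places: for existence and uniqueness of the Nash equilibrium (a Lax--Milgram coercivity argument on the product space, not separate strict convexity in each variable), and in the \emph{improved} Carleman with modified weights, where an energy bound $\|H\|^2_{L^2(0,3T/4;\mathbb{L}^2)}\le C(\tfrac{\alpha_1}{\mu_1}+\tfrac{\alpha_2}{\mu_2})\,\overline{\mathcal I}(s,\lambda;Z)$ must be absorbed on the left.
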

\begin{remark}
	
	(a) As mentioned by several authors, the assumption \eqref{Ass11SN} seems to be natural. It means that the follower targets  $y_{i,d}$, $i=1,2$, approach to 0 as $T\longrightarrow0$, and with this  the leader  finds no obstruction to control the system.  It remains an open problem to verify if this condition is
	necessary.\\
	(b)  In \cite{ArFeGu17}, the authors have considered some  weak conditions than \eqref{Assump10}, for instance $\omega_{1, d}\cap\omega\neq\omega_{2, d}\cap\omega$, in the context of the heat equation with Dirichlet conditions. This generalization  will be treated in a forthcoming paper for dynamic boundary conditions.\\
	(c) The hierarchical control is  motivated by applications where more than
	one objective is desirable. For example, we can think of $y$ and $y_{\Gamma}$ the concentration of some chemical product in $\Omega$ and on the boundary $\Gamma$, respectively. The
	process is to guide the system under consideration to $0$ by means of an optimal control $f$ acting on the domain $\omega$, and without, in the course of the action, going too far from $y_d$ in a small subdomain $w_d$. 	
\end{remark}

 \section{Preliminaries }\label{S.2}
 Let $\Omega\subset \mathbb{R}^N$ be a bounded open set with smooth boundary $\Gamma :=\partial\Omega$. Following \cite{MMS17}, we introduce the product space defined by
 $$ \mathbb{L}^2 = L^2(\Omega)\times L^2(\Gamma), \qquad .$$
 Here,  we have considered the Lebesgue measure $dx$ on $\Omega$ and the natural surface measure $d\sigma$ on $\Gamma.$ Equipped by the scalar product
 $$ \big\langle (u,w),(v,z)\big\rangle_{\mathbb{L}^2}= \langle u,v\rangle_{L^2(\Omega)} + \langle w,z\rangle_{L^2(\Gamma)} = \int_{\Omega}u v \,dx +  \int_{\Gamma}w z \,d\sigma.$$
 $\mathbb{L}^2$ is a Hilbert space.
 Recall that $H^1(\Gamma)$ and $H^2(\Gamma)$ are real Hilbert spaces endowed with the respective norms
 $$ \|u\|_{H^1(\Gamma)}= \langle u,u \rangle^{\frac{	1}{2}}_{H^1(\Gamma)}, \,\text{with} \,\,\langle u,v \rangle_{H^1(\Gamma)}= \int_{\Gamma}u v d\sigma + \int_{\Gamma}\nabla_{\Gamma} u \nabla_{\Gamma}v d\sigma, \quad $$
 and
 $$ \|u\|_{H^2(\Gamma)}= \langle u,u \rangle^{\frac{	1}{2}}_{H^2(\Gamma)},\, \text{with}\,\, \langle u,v \rangle_{H^2(\Gamma)}= \int_{\Gamma}u v \,d\sigma +\int_{\Gamma}\Delta_{\Gamma} u \Delta_{\Gamma}v d\sigma.$$
 We point out that the operator $\Delta_{\Gamma}$  can be considered as an unbounded linear operator from $L^2(\Gamma)$ in $L^2(\Gamma)$, with domain
 $$D(\Delta_{\Gamma}) =\{ u\in L^2(\Gamma) : \,\, \Delta_{\Gamma}u\in L^2(\Gamma)\},$$
 and  it is known   that $-\Delta_{\Gamma}$ is  a self-adjoint and nonnegative operator on $L^2(\Gamma)$. This implies that  $-\Delta_{\Gamma}$ generates an analytic  $C_0-$semigroup $(e^{t\Delta_\Gamma})_{t\geq 0}$  on $L^2(\Gamma)$. If $\Gamma$ is
 smooth, then one can show that $D(\Delta_\Gamma) = H^2(\Gamma)$, and $u\mapsto \|u\|_{L^2(\Gamma)}+ \|\Delta_{\Gamma}u\|_{L^2(\Gamma)}$ defines an equivalent norm on $H^2(\Gamma)$, see {\cite{MMS17}}  and  the references therein for more details.
 As in {\cite{MMS17}}, we denote
 $$\mathbb{H}^k=\{(u,u_\Gamma)\in H^k(\Omega)\times H^k(\Gamma): \,\, u_\Gamma=u_{|\Gamma}\},\quad k=1,2,$$
 viewed as a subspace of $H^k(\Omega)\times H^k(\Gamma)$ with the natural topology inherited by $H^k(\Omega)\times H^k(\Gamma)$ and
 $$ \mathbb{E}(t_0, t_1)= H^2(t_0,t_1; \mathbb{L}^2) \cap L^2(t_0,t_1; \mathbb{H}^2) \quad \text{for} \,\,t_1>t_0,  \, \text{and} \,\,\mathbb{E}_1=\mathbb{E}(0, T).$$
 We denote $(H^k(\Omega))'$, $H^{-k}(\Gamma)$ and  $\mathbb{H}^{-k}$ the topological dual of $H^k(\Omega)$, $H^{k}(\Gamma)$ and  $\mathbb{H}^{k}$, respectively, $k=1,2$, and
 $$	\mathbb{W}=\{U\in L^2(0,T; \mathbb{H}^1): \,\,\, U^{\prime}\in L^2(0,T; \mathbb{H}^{-1}) \}.$$
 Now we shall recall some results  on  the  well-posedness of the nonhomogeneous  forward    system
 \begin{equation}
 \left\{\begin{array}{ll}
 {\partial_t y-\text{div}(\mathcal{A}\nabla y)  +B(x,t)\cdot \nabla y+ a(x,t)y= f }& {\text { in } \Omega_T,} \\
 {\partial_t y_{_{\Gamma}}   - \text{div}_{\Gamma}(\mathcal{A}_{\Gamma}\nabla_{\Gamma} y_{\Gamma})    + \partial_{\nu}^{\mathcal{A}}y+B_{\Gamma}(x, t)\cdot \nabla_{\Gamma} y_{\Gamma} + b(x,t)y_{\Gamma}=g} &\,\,{\text {on} \,\Gamma_T,} \\
 {(y(0),y_{\Gamma}(0)) =(y_0,y_{\Gamma,0})} & {\text { in } \Omega_T} \label{sys2.2}
 \end{array}\right.
 \end{equation}
 and the nonhomogeneous backward one
{\small \begin{equation}
 \left\{\begin{array}{ll}
 {-\partial_t \varphi-\text{div}(\mathcal{A}\nabla \varphi)  -\text{div}(B\varphi)+ a(x,t)\varphi=f_1}& {\text {in} \,\Omega_T,} \\
 {-\partial_t \varphi_{\Gamma}   -\text{div}_{\Gamma}(\mathcal{A}_{\Gamma}\nabla_{\Gamma} \varphi_{\Gamma})     + \partial^{\mathcal{A}}_{\nu}\varphi - \text{div}_{\Gamma}( B_{\Gamma}\varphi_{\Gamma})+\varphi_{\Gamma} B\cdot\nu + b(x,t)\varphi_{\Gamma}=g_1,}&\,{\text{on}\,\Gamma_T,} \\
 {(\varphi(T),\varphi_{\Gamma}(T) ) =(\varphi_T,\varphi_{\Gamma,T}),} &\,{\text {in}\, \Omega \times\Gamma} \label{systadjoint2.3}.
 \end{array}\right.
 \end{equation}}Remark first that the system \eqref{sys2.2} can be rewritten as the following
 abstract Cauchy problem
 \begin{equation}\label{1.8}
 \left\{
 \begin{array}{ll}
 Y'(t) = A\,Y - D(t)\,Y+F \quad  t>0, \\

 Y(0)=Y_0= (y_0,y_{\Gamma,0}),
 \end{array}
 \right.
 \end{equation}
 where we  denoted
 $Y=(y, y_\Gamma), F=(f, g),$
 $$A=\begin{pmatrix}
 \mathrm{div}(\mathcal{A}\nabla )  & 0 \\
 -\partial^{\mathcal{A}}_{\nu} & \mathrm{div}_{\Gamma}(\mathcal{A}_{\Gamma}\nabla_{\Gamma})
 \end{pmatrix}, \mathcal{D}(A) =\mathbb{H}^2$$
 and $$D(t)=\begin{pmatrix}
 B(t)\cdot\nabla + a(t) & 0 \\
 0 & B_{\Gamma}(t)\cdot\nabla_{\Gamma} + b(t)
 \end{pmatrix}.$$
 Following \cite{MMS17}, we can show that the operator A satisfies the following important property.
 \begin{proposition}[\cite{MMS17}]\label{t4.2}
 	The operator $A$ is densely defined, and generates an analytic $C_0$-semigroup $(e^{tA})_{t\geq 0}$ on $\mathbb{L}^2.$ We have also  $(\mathbb{L}^2, \mathbb{H}^2 )_{\frac{1}{2}, 2}= \mathbb{H}^1$.
 \end{proposition}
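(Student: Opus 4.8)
The plan is to realize $-A$ as the self-adjoint operator associated with a symmetric, bounded, coercive sesquilinear form on $\mathbb{H}^1$, and then to read off both the analytic semigroup and the interpolation identity from the functional calculus of nonnegative self-adjoint operators. First I would introduce the form $\mathfrak{a}:\mathbb{H}^1\times\mathbb{H}^1\to\mathbb{R}$ defined, for $U=(u,u_\Gamma)$ and $V=(v,v_\Gamma)$ in $\mathbb{H}^1$, by
$$\mathfrak{a}(U,V)=\int_\Omega \mathcal{A}\nabla u\cdot\nabla v\,\d x+\int_\Gamma \mathcal{A}_\Gamma\nabla_\Gamma u_\Gamma\cdot\nabla_\Gamma v_\Gamma\,\d\sigma.$$
Symmetry is immediate from the symmetry of $\mathcal{A}$ and $\mathcal{A}_\Gamma$ in assumption (i), and boundedness $|\mathfrak{a}(U,V)|\le C\|U\|_{\mathbb{H}^1}\|V\|_{\mathbb{H}^1}$ follows since $\mathcal{A}\in C^1(\overline\Omega)$ and $\mathcal{A}_\Gamma\in C^1(\Gamma)$ are bounded. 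Using the uniform ellipticity \eqref{el1.2}, I would establish the G\aa rding-type inequality
$$\mathfrak{a}(U,U)+\|U\|_{\mathbb{L}^2}^2\ge \min(\alpha,\alpha_\Gamma,1)\,\|U\|_{\mathbb{H}^1}^2,$$
which expresses that $\mathfrak{a}$ is a closed, symmetric, nonnegative form whose form domain $\mathbb{H}^1$ is densely and continuously embedded in $\mathbb{L}^2$.

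Next I would identify the operator associated with $\mathfrak{a}$. A direct integration by parts, using the weak definitions of $\dv$ and $\dv_\Gamma$ recalled in Section \ref{S.1} together with $\partial_\nu^{\mathcal{A}}u=(\mathcal{A}\nabla u\cdot\nu)_{|\Gamma}$, gives for $U\in\mathbb{H}^2$ and any $V\in\mathbb{H}^1$
$$\mathfrak{a}(U,V)=-\int_\Omega \dv(\mathcal{A}\nabla u)\,v\,\d x+\int_\Gamma\big(\partial_\nu^{\mathcal{A}}u-\dv_\Gamma(\mathcal{A}_\Gamma\nabla_\Gamma u_\Gamma)\big)v_\Gamma\,\d\sigma=\langle -AU,V\rangle_{\mathbb{L}^2}.$$
This shows $\mathbb{H}^2\subset D(-A)$ and that $-A$ acts as the matrix operator of the statement. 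The reverse inclusion $D(-A)\subset\mathbb{H}^2$, i.e. the identification $\mathcal{D}(A)=\mathbb{H}^2$, is an elliptic regularity statement for the transmission (dynamic boundary) problem, and is exactly the point I would borrow from \cite{MMS17}. Granting it, $-A$ is the self-adjoint operator associated with a closed, symmetric, nonnegative form; it is therefore densely defined (as $\mathbb{H}^2$ is dense in $\mathbb{L}^2$) and its spectrum lies in $[0,\infty)$, so $A$ generates an analytic $C_0$-semigroup $(e^{tA})_{t\ge0}$ on $\mathbb{L}^2$.

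For the interpolation identity I would invoke the functional calculus of the strictly positive self-adjoint operator $L:=I-A$, with $D(L)=\mathcal{D}(A)=\mathbb{H}^2$. The form domain of $\mathfrak{a}$, equipped with the inner product $\mathfrak{a}(\cdot,\cdot)+\langle\cdot,\cdot\rangle_{\mathbb{L}^2}$, coincides with $D(L^{1/2})$; and for a positive self-adjoint operator the real interpolation space of index $(1/2,2)$ between the ground space and the operator domain is precisely this square-root domain, so that
$$(\mathbb{L}^2,\mathbb{H}^2)_{\frac12,2}=(\mathbb{L}^2,D(A))_{\frac12,2}=D(L^{1/2})=\big(\text{form domain of }\mathfrak{a}\big)=\mathbb{H}^1,$$
the last equality being the one already obtained while setting up the form. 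This is the standard square-root/reiteration result for self-adjoint operators on Hilbert space (as in Lions--Magenes), applied with $H=\mathbb{L}^2$.

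The routine parts are the boundedness, the G\aa rding inequality, and the integration-by-parts identification of $-A$, all of which reduce to assumptions (i)--(ii) and the weak definitions already fixed in the text, while the interpolation identity and semigroup generation are formal consequences of self-adjointness. The \emph{genuine obstacle} is the elliptic regularity $\mathcal{D}(A)=\mathbb{H}^2$: proving that any $U\in\mathbb{H}^1$ solving $\mathfrak{a}(U,\cdot)=\langle W,\cdot\rangle_{\mathbb{L}^2}$ with $W\in\mathbb{L}^2$ is in fact $\mathbb{H}^2$ requires controlling the coupling between the bulk equation and the Laplace--Beltrami dynamics on $\Gamma$ through the co-normal trace $\partial_\nu^{\mathcal{A}}u$, which is precisely the delicate regularity theory developed in \cite{MMS17}; once this is available, the remaining conclusions follow as above.
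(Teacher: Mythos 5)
Your proposal is correct, and it follows essentially the same route as the source of this statement: the paper itself offers no proof, quoting the proposition verbatim from \cite{MMS17}, and the argument there is exactly your form-method one --- a bounded, symmetric, coercive bilinear form on $\mathbb{H}^1$, whose associated self-adjoint nonpositive operator yields the analytic $C_0$-semigroup, with the interpolation identity $(\mathbb{L}^2,\mathbb{H}^2)_{\frac12,2}=\mathbb{H}^1$ read off from the square-root domain. You also correctly isolate the only nontrivial ingredient, the elliptic regularity $\mathcal{D}(A)=\mathbb{H}^2$ for the transmission problem, which is precisely what must be (and is) borrowed from \cite{MMS17}.
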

 The following existence and uniqueness results hold, see \cite{BCMO20} for the proof.
 \begin{proposition} [\cite{BCMO20}]\label{t4.22}
 	For every $Y_0=(y_0,y_{\Gamma,0})\in\mathbb{L}^2$, $f\in L^2(\Omega_T)$ and  $g\in L^2(\Gamma_T)$ the system \eqref{sys2.2} has a unique mild solution given by \begin{equation}\label{4.2}
 	\small
 	Y(t)= e^{tA}Y_0 \,+\, \int_{0}^{t}e^{(t-s)A}(F(s)- D(s)Y(s))ds
 	\end{equation}
 	for all $t\in [0,T]$.
 	Moreover,  there exists a
 	constant $C > 0$ such that
 	\begin{equation}\label{4.3}
 	\|Y\|_{C([0,T];\mathbb{L}^2)}\leq C\big( \|Y_0\|_{\mathbb{L}^2} + \|f\|_{L^2(\Omega_T) } + \|g\|_{L^2(\Gamma_T) } \big).
 	\end{equation}
 \end{proposition}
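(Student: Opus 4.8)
The plan is to read \eqref{1.8} as an analytic-semigroup equation with an \emph{unbounded} first-order perturbation and to solve the fixed-point equation \eqref{4.2} by a contraction argument in a space that encodes the parabolic smoothing. The only structural difficulty is that $D(t)$ is \emph{not} bounded on $\mathbb{L}^2$: the drift terms $B\cdot\nabla$ and $B_\Gamma\cdot\nabla_\Gamma$ cost one derivative, so $D(t)$ is merely bounded from $\mathbb{H}^1$ into $\mathbb{L}^2$, uniformly in $t$, with a constant $K$ depending only on the $L^\infty$-norms of $a,b,B,B_\Gamma$; this is immediate from $\|\nabla u\|_{L^2(\Omega)}\le\|u\|_{H^1(\Omega)}$ and $\|\nabla_\Gamma u_\Gamma\|_{L^2(\Gamma)}\le\|u_\Gamma\|_{H^1(\Gamma)}$. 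Since the coefficients are only $L^\infty$ in time, classical non-autonomous evolution-family theory (which would require H\"older continuity in $t$) is unavailable, so the fixed point is the natural route.

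First I would fix the functional setting. By Proposition \ref{t4.2}, $A$ generates an analytic $C_0$-semigroup on $\mathbb{L}^2$; replacing $A$ by $A-\lambda I$ for $\lambda$ large I may assume $-A$ is positive and sectorial, so that $(-A)^{1/2}$ is defined and, again by Proposition \ref{t4.2}, $D\big((-A)^{1/2}\big)=(\mathbb{L}^2,\mathbb{H}^2)_{1/2,2}=\mathbb{H}^1$ with equivalent norms. The analytic bounds
$$\|e^{tA}\|_{\mathcal{L}(\mathbb{L}^2)}\le M,\qquad \|(-A)^{1/2}e^{tA}\|_{\mathcal{L}(\mathbb{L}^2)}\le C\,t^{-1/2}\quad(0<t\le T)$$
together with this identification give the smoothing estimate $\|e^{tA}U\|_{\mathbb{H}^1}\le C t^{-1/2}\|U\|_{\mathbb{L}^2}$.

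Next I would run the contraction on a short interval $[0,T_0]$ in
$$Z=\Big\{Y\in C([0,T_0];\mathbb{L}^2):\ \|Y\|_Z:=\sup_{0<t\le T_0}\|Y(t)\|_{\mathbb{L}^2}+\sup_{0<t\le T_0}t^{1/2}\|Y(t)\|_{\mathbb{H}^1}<\infty\Big\},$$
with $\Phi Y(t)=e^{tA}Y_0+\int_0^t e^{(t-s)A}\big(F(s)-D(s)Y(s)\big)\,ds$. The weight $t^{1/2}$ makes $s\mapsto D(s)Y(s)$ integrable, since $\|D(s)Y(s)\|_{\mathbb{L}^2}\le K\|Y(s)\|_{\mathbb{H}^1}\le K s^{-1/2}\|Y\|_Z$. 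Estimating the two seminorms of $\Phi Y_1-\Phi Y_2$ with the semigroup bounds yields, for the $\mathbb{L}^2$ part, the factor $\int_0^t s^{-1/2}ds=2t^{1/2}$, and for the weighted $\mathbb{H}^1$ part the scale-invariant Beta integral $\int_0^t(t-s)^{-1/2}s^{-1/2}ds=\pi$ times the outer weight $t^{1/2}$; in both cases the bound is $\le C(M,K)\,T_0^{1/2}\|Y_1-Y_2\|_Z$. Hence $\Phi$ is a contraction once $T_0$ is small, giving a unique fixed point, i.e. a mild solution \eqref{4.2} on $[0,T_0]$. Since the contraction constant depends only on the interval length, restarting from $Y(T_0)\in\mathbb{H}^1\subset\mathbb{L}^2$ and covering $[0,T]$ by finitely many steps produces the unique mild solution on all of $[0,T]$; uniqueness propagates the same way.

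Finally, for \eqref{4.3} I would feed the solution back into \eqref{4.2}. Writing $\psi(t)=\|Y(t)\|_{\mathbb{H}^1}$, the $\mathbb{H}^1$-bound gives the weakly singular Volterra inequality $\psi(t)\le a(t)+CK\int_0^t(t-s)^{-1/2}\psi(s)\,ds$ with $a(t)=Ct^{-1/2}\|Y_0\|_{\mathbb{L}^2}+C\int_0^t(t-s)^{-1/2}\|F(s)\|_{\mathbb{L}^2}ds$, whose generalized (Henry type) Gronwall lemma yields $\int_0^T\psi(s)\,ds\le C\big(\|Y_0\|_{\mathbb{L}^2}+\|F\|_{L^2(0,T;\mathbb{L}^2)}\big)$, the $F$-contribution being controlled by Fubini and Cauchy--Schwarz. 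Inserting this into $\|Y(t)\|_{\mathbb{L}^2}\le M\|Y_0\|_{\mathbb{L}^2}+M\int_0^t\|F(s)\|_{\mathbb{L}^2}ds+MK\int_0^t\psi(s)\,ds$ and using $\|F\|_{L^2(0,T;\mathbb{L}^2)}^2=\|f\|_{L^2(\Omega_T)}^2+\|g\|_{L^2(\Gamma_T)}^2$ gives \eqref{4.3}. The main obstacle throughout is precisely the loss of one derivative in $D(t)$; the whole argument hinges on trading it against the $t^{-1/2}$ smoothing of the analytic semigroup through the interpolation identity of Proposition \ref{t4.2}.
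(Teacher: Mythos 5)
The paper itself contains no proof of Proposition \ref{t4.22}: it is imported verbatim from \cite{BCMO20}, so there is no internal argument to compare yours against. Judged on its own merits, your contraction scheme is sound and attacks exactly the right difficulty: $D(t)$ is unbounded on $\mathbb{L}^2$ but uniformly bounded $\mathbb{H}^1\to\mathbb{L}^2$, and you trade this one-derivative loss against the $t^{-1/2}$ smoothing of the analytic semigroup. The weighted space $Z$, the scale-invariant Beta integral $\int_0^t(t-s)^{-1/2}s^{-1/2}\,ds=\pi$ giving a contraction constant $\le C(M,K)\,T_0^{1/2}$, the restart over finitely many subintervals (the contraction constant depends only on the interval length), and the Henry--Gronwall step producing \eqref{4.3} all work as you describe.

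Two points deserve tightening. First, the step where you write $D\big((-A)^{1/2}\big)=(\mathbb{L}^2,\mathbb{H}^2)_{1/2,2}$ ``again by Proposition \ref{t4.2}'' claims more than that proposition gives: for a general sectorial operator the domain of the square root need \emph{not} coincide with the real interpolation space (this is the Kato square-root issue; in general one only has $(X,D(A))_{1/2,1}\subset D\big((-A)^{1/2}\big)\subset (X,D(A))_{1/2,\infty}$). Here the conclusion survives because $A$ is self-adjoint on $\mathbb{L}^2$ --- it is the operator associated with the symmetric form built from the symmetric matrices $\mathcal{A},\mathcal{A}_\Gamma$, see \cite{MMS17} --- so its square-root domain is the form domain $\mathbb{H}^1$. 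Cleaner still, you can avoid fractional powers entirely: for any analytic semigroup one has $\|e^{tA}\|_{\mathcal{L}(X,(X,D(A))_{\theta,q})}\le Ct^{-\theta}$ directly from $\|e^{tA}\|\le M$, $\|Ae^{tA}\|\le C/t$ and the $K$-functional, which together with Proposition \ref{t4.2} is precisely the bound $\|e^{tA}U\|_{\mathbb{H}^1}\le Ct^{-1/2}\|U\|_{\mathbb{L}^2}$ you need. Second, state explicitly that uniqueness is asserted in the weighted class (equivalently, among mild solutions for which $s\mapsto D(s)Y(s)$ is integrable); outside that class the right-hand side of \eqref{4.2} is not even defined, so this is the natural class, but it should be said. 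Finally, for context: an alternative, and for coefficients merely $L^\infty$ in time arguably more standard, route is variational --- Lions' theorem for the non-autonomous, bounded, quasi-coercive form on $\mathbb{H}^1$, with \eqref{4.3} coming from testing the equation with $Y$ and Gronwall --- after which one verifies that the weak solution satisfies the Duhamel formula \eqref{4.2}; your approach obtains \eqref{4.2} first, at the price of the semigroup-smoothing machinery. Both are acceptable.
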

 For the backward system, we have the following well-posedness result, see \cite{KhMaMaGh19}.
 \begin{proposition}\label{prop2.3}
 	For every $\Phi_T=(\varphi_T,\varphi_{\Gamma,T})\in\mathbb{L}^2$, $f_1\in L^2(0,T; (H^1(\Omega))')$ and $g_1\in L^2(0,T; H^{-1}(\Gamma))$, the backward system \eqref{systadjoint2.3} has a unique weak solution $\Phi=(\varphi, \varphi_{\Gamma})\in \mathbb{W}$. That is
 	\begin{align}
 	&\int_{0}^{T}\langle \partial_t\varphi, v\rangle_{(H^1(\Omega))^{'}, H^1(\Omega) }\,dt + \int_{\Omega_T}\mathcal{A}\nabla\varphi\cdot\nabla v dx\,dt
 	- \int_{\Omega_T}\varphi B\cdot\nabla v dx\,dt\nonumber\\
 	&+\int_{\Omega_T}a \varphi v dx\,dt + \int_{0}^{T}\langle \partial_t\varphi_{\Gamma}, v_{\Gamma}\rangle_{H^{-1}(\Gamma), H^1(\Gamma) }\,dt +\int_{\Gamma_T}\mathcal{A}_{\Gamma}\nabla_{\Gamma}\varphi_{\Gamma}\cdot\nabla_{\Gamma} v_{\Gamma} d\sigma\,dt \nonumber\\
 	& -\int_{\Gamma_T}\varphi_{\Gamma} B_{\Gamma}\cdot\nabla_{\Gamma} v_{\Gamma} d\sigma\,dt +\int_{\Gamma_T}b\varphi_{\Gamma} v_{\Gamma} d\sigma\,dt=  \int_{0}^{T}\langle f_1, v\rangle_{(H^1(\Omega))^{'}, H^1(\Omega) }\,dt\nonumber\\
 	&+\int_{0}^{T}\langle g_1, v_{\Gamma}\rangle_{H^{-1}(\Gamma), H^1(\Gamma) }\,dt\label{Kh1.12}
 	\end{align}
 	for each $(v,v_{\Gamma})\in L^2(0,T; \mathbb{H}^1)$, with  $v(0)=v_{\Gamma}(0)=0$ and $(\varphi(T), \varphi_{\Gamma}(T) )=(\varphi_T, \varphi_{\Gamma, T}).$
 	Moreover, we have the estimate
 	\begin{equation*}
 	\max_{0\leq t\leq T}\|\Phi(t)\|^2_{\mathbb{L}^2}+\|\Phi\|^2_{L^2(0,T ; \mathbb{H}^1)}+\|\Phi^{\prime}\|^2_{L^2(0,T; \mathbb{H}^{-1})}\leq
 	\end{equation*}
 	\begin{equation}\label{Eq.19NS}
 	C\big(\|\Phi_T\|^2_{\mathbb{L}^2}+ \|f_1\|^2_{L^2(0,T; (H(\Omega)^{1})')} +\|g_1\|^2_{L^2(0,T; H^{-1}(\Gamma))}),
 	\end{equation}
 	where  $C$ is  a positive constant.
 \end{proposition}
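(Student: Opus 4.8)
The plan is to recast the adjoint system \eqref{systadjoint2.3} as an abstract linear parabolic variational equation in a Gelfand triple, invoke the classical Lions existence theorem for such equations, and then obtain the estimate \eqref{Eq.19NS} from an energy identity and Gronwall's lemma. First I would reverse time via $t\mapsto T-t$, which turns the terminal-value problem \eqref{systadjoint2.3} into a forward Cauchy problem with initial datum $(\varphi_T,\varphi_{\Gamma,T})\in\mathbb{L}^2$ and with coefficients and data evaluated at $T-t$ (still in $L^\infty$ and in the appropriate dual spaces). By Proposition \ref{t4.2}, $\mathbb{H}^1$ is densely and continuously embedded in $\mathbb{L}^2$; identifying $\mathbb{L}^2$ with its own dual then yields the Gelfand triple $\mathbb{H}^1\hookrightarrow\mathbb{L}^2\hookrightarrow\mathbb{H}^{-1}$, in which $\mathbb{W}$ is the natural solution space. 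I would also note that the pair $(f_1,g_1)\in (H^1(\Omega))'\times H^{-1}(\Gamma)$ defines, by restriction to the trace-compatible subspace $\mathbb{H}^1$, an element $\mathcal{F}=(f_1,g_1)\in L^2(0,T;\mathbb{H}^{-1})$ with $\|\mathcal{F}\|_{\mathbb{H}^{-1}}\le C(\|f_1\|_{(H^1(\Omega))'}+\|g_1\|_{H^{-1}(\Gamma)})$; this is exactly the grouping that produces the right-hand side of \eqref{Eq.19NS}.

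Next I would introduce, for a.e.\ $t$, the bilinear form on $\mathbb{H}^1\times\mathbb{H}^1$ read off from \eqref{Kh1.12},
\begin{align*}
\mathfrak{a}(t;\Phi,V) &= \int_{\Omega}\mathcal{A}\nabla\varphi\cdot\nabla v\,dx - \int_{\Omega}\varphi\, B\cdot\nabla v\,dx + \int_{\Omega}a\varphi v\,dx \\
&\quad + \int_{\Gamma}\mathcal{A}_{\Gamma}\nabla_{\Gamma}\varphi_{\Gamma}\cdot\nabla_{\Gamma} v_{\Gamma}\,d\sigma - \int_{\Gamma}\varphi_{\Gamma}\, B_{\Gamma}\cdot\nabla_{\Gamma} v_{\Gamma}\,d\sigma + \int_{\Gamma}b\varphi_{\Gamma} v_{\Gamma}\,d\sigma,
\end{align*}
where $\Phi=(\varphi,\varphi_\Gamma)$ and $V=(v,v_\Gamma)$. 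Here I would remark that \eqref{Kh1.12} arises from \eqref{systadjoint2.3} by pairing each equation against $(v,v_\Gamma)\in\mathbb{H}^1$ and using the weak definitions of $\dv$ and $\dv_{\Gamma}$: the two boundary contributions $\langle \partial_\nu^{\mathcal{A}}\varphi,v_\Gamma\rangle$ produced by the bulk and boundary equations cancel, as do the two terms carrying $B\cdot\nu$, so that no trace regularity beyond $\mathbb{H}^1$ is required. I would then verify the three hypotheses of Lions' theorem: measurability of $t\mapsto\mathfrak{a}(t;\Phi,V)$; boundedness $|\mathfrak{a}(t;\Phi,V)|\le M\|\Phi\|_{\mathbb{H}^1}\|V\|_{\mathbb{H}^1}$, which follows from the $C^1$ (hence $L^\infty$) bounds on $\mathcal{A},\mathcal{A}_\Gamma$ and the $L^\infty$ bounds on $a,b,B,B_\Gamma$; and a Gårding inequality.

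Establishing the Gårding inequality is the main obstacle, and the only place where the structure of the problem is used in an essential way. Using the uniform ellipticity \eqref{el1.2}, the two principal terms are bounded below by $\alpha\|\nabla\varphi\|^2_{L^2(\Omega)}+\alpha_\Gamma\|\nabla_\Gamma\varphi_\Gamma\|^2_{L^2(\Gamma)}$, while the first-order drift terms $\int_{\Omega}\varphi\,B\cdot\nabla\varphi$ and $\int_{\Gamma}\varphi_\Gamma\,B_\Gamma\cdot\nabla_\Gamma\varphi_\Gamma$ are controlled, by Young's inequality, through $\tfrac{\alpha}{2}\|\nabla\varphi\|^2_{L^2(\Omega)}+\tfrac{\alpha_\Gamma}{2}\|\nabla_\Gamma\varphi_\Gamma\|^2_{L^2(\Gamma)}$ plus a large multiple of $\|\Phi\|^2_{\mathbb{L}^2}$; the zeroth-order terms are absorbed into the same $\mathbb{L}^2$ remainder. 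Adding $\lambda\|\Phi\|^2_{\mathbb{L}^2}$ for $\lambda$ large enough then reconstructs the full $\mathbb{H}^1$-norm, giving $\mathfrak{a}(t;\Phi,\Phi)\ge\beta\|\Phi\|^2_{\mathbb{H}^1}-\lambda\|\Phi\|^2_{\mathbb{L}^2}$. With the three hypotheses in hand, the classical existence theorem for abstract parabolic variational problems yields a unique $\Phi\in\mathbb{W}$ solving \eqref{Kh1.12} with the prescribed value at $t=T$; reversing time back produces the weak solution of \eqref{systadjoint2.3}.

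Finally I would prove \eqref{Eq.19NS}. Since $\Phi\in\mathbb{W}$, it belongs to $C([0,T];\mathbb{L}^2)$ and satisfies the energy identity $\tfrac{d}{dt}\|\Phi(t)\|^2_{\mathbb{L}^2}=2\langle\Phi'(t),\Phi(t)\rangle_{\mathbb{H}^{-1},\mathbb{H}^1}$. Taking $V=\Phi$ in the equation, applying the Gårding inequality, and bounding the duality pairing $\langle\mathcal{F},\Phi\rangle$ by Young's inequality, I obtain a differential inequality of the form $\tfrac12\tfrac{d}{dt}\|\Phi\|^2_{\mathbb{L}^2}+\tfrac{\beta}{2}\|\Phi\|^2_{\mathbb{H}^1}\le\lambda\|\Phi\|^2_{\mathbb{L}^2}+C\|\mathcal{F}\|^2_{\mathbb{H}^{-1}}$, to which Gronwall's lemma applies; integrating controls $\max_{t}\|\Phi(t)\|^2_{\mathbb{L}^2}$ and $\|\Phi\|^2_{L^2(0,T;\mathbb{H}^1)}$ by the right-hand side of \eqref{Eq.19NS}. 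The bound on $\|\Phi'\|^2_{L^2(0,T;\mathbb{H}^{-1})}$ then follows by reading $\Phi'$ off the equation as an $\mathbb{H}^{-1}$-valued map, $\langle\Phi'(t),V\rangle=\langle\mathcal{F}(t),V\rangle-\mathfrak{a}(t;\Phi,V)$, and using the boundedness of $\mathfrak{a}$ together with the already-established control of $\|\Phi\|_{L^2(0,T;\mathbb{H}^1)}$. Uniqueness is immediate from the same energy estimate applied to the difference of two solutions with vanishing data.
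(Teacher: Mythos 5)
Your argument is correct, but there is nothing in the paper to compare it against step by step: the authors do not prove Proposition \ref{prop2.3} at all, they quote it from the reference \cite{KhMaMaGh19} (the sentence preceding the statement is the entire justification). Your reconstruction --- reverse time, set up the Gelfand triple $\mathbb{H}^1\hookrightarrow\mathbb{L}^2\hookrightarrow\mathbb{H}^{-1}$ (density of $\mathbb{H}^1$ does follow from Proposition \ref{t4.2}, since $\mathbb{H}^2\subset\mathbb{H}^1$), regard $(f_1,g_1)$ as a single element $\mathcal{F}\in L^2(0,T;\mathbb{H}^{-1})$, and apply Lions' theorem for non-autonomous bounded forms satisfying a G\aa rding inequality, then derive \eqref{Eq.19NS} from the energy identity, Young's inequality and Gronwall's lemma --- is the natural variational (form-method) proof for this class of problems and is in the same spirit as the proof in the cited reference. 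The structural point you single out is indeed the crux: on the trace-compatible space $\mathbb{H}^1$ the two $\partial^{\mathcal{A}}_{\nu}\varphi$ contributions and the two $B\cdot\nu$ contributions cancel when the bulk and boundary equations are paired against $(v,v_{|\Gamma})$, so the resulting form contains only first-order terms with $L^\infty$ coefficients, which is exactly what boundedness and the G\aa rding bound need. What your write-up buys is a self-contained proof inside the paper; the cost is only the routine verification of Lions' hypotheses, which you carry out correctly, including the correct treatment of the drift terms by Young's inequality and the recovery of $\|\Phi'\|_{L^2(0,T;\mathbb{H}^{-1})}$ from the equation.

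Two caveats, neither of which is a gap in the well-posedness argument itself. First, direct pairing of \eqref{systadjoint2.3} against $(v,v_\Gamma)$ produces $-\int_0^T\langle\partial_t\varphi,v\rangle\,dt$ and $+\int_{\Omega_T}\varphi\, B\cdot\nabla v\,dx\,dt$, i.e.\ the opposite signs to those printed in \eqref{Kh1.12} on precisely these two terms; so your remark that \eqref{Kh1.12} ``arises'' from this pairing is not literally true of the formula as printed (this is a sign inconsistency inherited from the transcription of \cite{KhMaMaGh19}), and a careful version of your proof should state explicitly which sign convention its weak solution satisfies. Second, your construction yields $\Phi'\in L^2(0,T;\mathbb{H}^{-1})$ with $\mathbb{H}^{-1}=(\mathbb{H}^1)'$, whereas \eqref{Kh1.12} writes the time derivative as two separate pairings on $(H^1(\Omega))'$ and $H^{-1}(\Gamma)$; an element of $(\mathbb{H}^1)'$ admits no canonical such splitting, so the identity your solution satisfies is the summed one, $\langle\Phi',V\rangle_{\mathbb{H}^{-1},\mathbb{H}^1}+\mathfrak{a}(t;\Phi,V)=\langle\mathcal{F},V\rangle_{\mathbb{H}^{-1},\mathbb{H}^1}$, which is the correct reading of \eqref{Kh1.12} and the one actually used later in the paper (e.g.\ in the energy estimates of Lemma \ref{lem4.5}).
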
	

 \section{{ Existence, uniqueness and characterization of Nash-equilibrium.}}\label{S.3}
 In this section, using the same ideas as in \cite{ArCaSa15}, we shall prove the existence and provide a characterization, in term of an adjoint system,  of the Nash-equilibrium in the sense  of \eqref{NE7}. In the sequel, if $X$ is a Hilbert space, $\langle \cdot, \cdot \rangle _{X}$ stands for the scalar product  of $X$.

 \par For $i=1,2$, consider  the functional given by \eqref{Ji.12} and denote  the spaces
 {\small\begin{equation*}
 \mathcal{H}_i=L^2(0,T;L^2(w_i)),\,\, \mathcal{H}_{i,d}=L^2(0,T;L^2(w_{i,d})),   \,\,\mathcal{H}=\mathcal{H}_1\times \mathcal{H}_2\,\,\text{and} \,\, \mathcal{H}_d=\mathcal{H}_{1,d}\times \mathcal{H}_{2,d}.
 \end{equation*}}Define the operators $L_i\in \mathcal{L}(\mathcal{H}_i, L^2(0,T; \mathbb{L}^2))$ and  $\ell_i\in \mathcal{L}(\mathcal{H}_i, L^2(0,T; L^2(\Omega)))$  by
 \begin{align*}
 L_i(v_i)=Y_i= (y_i, y_{\Gamma,i}),\,\,\text{and}\,\, \ell_i(v_i)=y_i,
 \end{align*}
 where $Y_i=(y_i, y_{\Gamma,i})$ is the solution to
 {\small\begin{equation}\label{Eq20NS}
 \left\{\begin{array}{ll}
 {\partial_t y_i-\mathrm{div}(\mathcal{A}\nabla y_i)  +B(x,t)\cdot \nabla y_i+ a(x,t)y_i= v_i1_{\omega_i}}& {\text { in } \Omega_{T},} \\
 {\partial_t y_{_{\Gamma,i}}   - \mathrm{div}_{\Gamma}(\mathcal{A}_{\Gamma}\nabla_{\Gamma} y_{\Gamma,i})    + \partial_{\nu}^{\mathcal{A}}y_i+B_{\Gamma}(x, t)\cdot \nabla_{\Gamma} y_{\Gamma,i} + b(x,t)y_{\Gamma, i}=0}& {\text { on } \Gamma_{T},} \\
 {(y_i(0),y_{\Gamma,i}(0)) =0}& {\text { in } \Omega \times \Gamma}.
 \end{array}\right.
 \end{equation}}So, we can write $Y$ as follows  $$Y(f,v_1,v_2)=L_1(v_1)+L_2(v_2)+Q(f),$$
 where $Q(f)=(q, q_{\Gamma})$ solves the system
 \begin{equation}
 \left\{\begin{array}{ll}
 {\partial_t q-\mathrm{div}(\mathcal{A}\nabla q)  +B(x,t)\cdot \nabla q+ a(x,t)q= 1_{\omega}f}& {\text { in } \Omega_{T},} \\[2mm]
 {\partial_t q_{\Gamma}   - \mathrm{div}_{\Gamma}(\mathcal{A}_{\Gamma}\nabla_{\Gamma} q_{\Gamma})    + \partial_{\nu}^{\mathcal{A}}q_{\Gamma}+B_{\Gamma}(x, t)\cdot \nabla_{\Gamma} q_{\Gamma} + b(x,t)q_{\Gamma}=0}& {\text { on } \Gamma_{T},} \\[2mm]
 {(q(0),q_{\Gamma}(0)) =Y_0}& {\text { in } \Omega \times \Gamma}.
 \end{array}\right.
 \end{equation}
 %
 \par Using  some ideas of \cite{ArCaSa15}, we shall prove the existence and uniqueness of Nash-equilibrium. More precisely, we have the following result.
 \begin{proposition}\label{Prop04}
 	There exists a constant $\mu_0 >0$ such that, if $\mu_i\geq \mu_0$, $i = 1, 2$, then for   each   $f\in L^2(0,T;L^2(\omega))$ there exists a unique Nash-equilibrium $(v^{\star}_1(f),v^{\star}_2(f))\in \mathcal{H}_1\times \mathcal{H}_2$ for $(J_1, J_2)$
 	associated to $f$. Furthermore, there exists a constant $C>0 $  such that
 	\begin{equation}
 	\|(v^{\star}_1(f),v^{\star}_2(f))\|_{\mathcal{H}}\leq C(1+ \|f\|_{L^2(0,T;L^2(\omega))}).
 	\end{equation}
 \end{proposition}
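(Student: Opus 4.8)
The plan is to recast the Nash characterization \eqref{NE7} as a single coercive variational identity on the Hilbert space $\mathcal{H}=\mathcal{H}_1\times\mathcal{H}_2$ and then to invoke the Lax--Milgram theorem; the largeness of the penalization parameters $\mu_i$ is precisely what will yield coercivity.

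First I would differentiate the functionals. Since each $J_i$ in \eqref{Ji.12} is quadratic in $v_i$ and the state splits as $Y(f,v_1,v_2)=L_1(v_1)+L_2(v_2)+Q(f)$, writing $y=\ell_1(v_1)+\ell_2(v_2)+q$ for its domain component (with $Q(f)=(q,q_\Gamma)$), a direct computation gives, for every $v\in\mathcal{H}_1$,
\begin{equation*}
\frac{\partial J_1}{\partial v_1}(f;v_1,v_2)(v)=\alpha_1\int_{\omega_{1,d}\times(0,T)}(y-y_{1,d})\,\ell_1(v)\,dx\,dt+\mu_1\int_{\omega_1\times(0,T)}v_1\,v\,dx\,dt,
\end{equation*}
and symmetrically for $\partial J_2/\partial v_2$. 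Hence \eqref{NE7} is equivalent to finding $(v^\star_1,v^\star_2)\in\mathcal{H}$ such that $\mathcal{B}\big((v^\star_1,v^\star_2),(v,w)\big)=\mathcal{G}(v,w)$ for all $(v,w)\in\mathcal{H}$, where
\begin{align*}
\mathcal{B}\big((v_1,v_2),(v,w)\big)&=\alpha_1\int_{\omega_{1,d}\times(0,T)}\big(\ell_1(v_1)+\ell_2(v_2)\big)\,\ell_1(v)\,dx\,dt\\
&\quad+\alpha_2\int_{\omega_{2,d}\times(0,T)}\big(\ell_1(v_1)+\ell_2(v_2)\big)\,\ell_2(w)\,dx\,dt+\mu_1\langle v_1,v\rangle_{\mathcal{H}_1}+\mu_2\langle v_2,w\rangle_{\mathcal{H}_2},
\end{align*}
and $\mathcal{G}(v,w)=\alpha_1\int_{\omega_{1,d}\times(0,T)}(y_{1,d}-q)\,\ell_1(v)\,dx\,dt+\alpha_2\int_{\omega_{2,d}\times(0,T)}(y_{2,d}-q)\,\ell_2(w)\,dx\,dt$.

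Next I would verify the Lax--Milgram hypotheses. Because $\ell_i\in\mathcal{L}(\mathcal{H}_i,L^2(0,T;L^2(\Omega)))$ (which rests on the well-posedness estimate \eqref{4.3} of Proposition \ref{t4.22}), one has $\|\ell_i(v_i)\|_{L^2(0,T;L^2(\Omega))}\le C\|v_i\|_{\mathcal{H}_i}$, making $\mathcal{B}$ a bounded bilinear form; moreover $\|q\|_{L^2(0,T;L^2(\Omega))}\le C(\|Y_0\|_{\mathbb{L}^2}+\|f\|_{L^2(0,T;L^2(\omega))})$ and $y_{i,d}\in\mathcal{H}_{i,d}$, so $\mathcal{G}$ is a bounded linear functional. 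The crux is coercivity: evaluating on the diagonal,
\begin{equation*}
\mathcal{B}\big((v_1,v_2),(v_1,v_2)\big)=\alpha_1\|\ell_1(v_1)\|^2_{L^2(\omega_{1,d}\times(0,T))}+\alpha_2\|\ell_2(v_2)\|^2_{L^2(\omega_{2,d}\times(0,T))}+R+\mu_1\|v_1\|^2_{\mathcal{H}_1}+\mu_2\|v_2\|^2_{\mathcal{H}_2},
\end{equation*}
where the two squared norms are nonnegative and the coupling term $R$ obeys $|R|\le(\alpha_1+\alpha_2)C^2\|v_1\|_{\mathcal{H}_1}\|v_2\|_{\mathcal{H}_2}$ by continuity of the $\ell_i$. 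Setting $\mu_0:=(\alpha_1+\alpha_2)C^2$, Young's inequality gives $|R|\le\tfrac{\mu_0}{2}\big(\|v_1\|^2_{\mathcal{H}_1}+\|v_2\|^2_{\mathcal{H}_2}\big)$, so that $\mathcal{B}\big((v_1,v_2),(v_1,v_2)\big)\ge\tfrac{\mu_0}{2}\|(v_1,v_2)\|^2_{\mathcal{H}}$ whenever $\mu_i\ge\mu_0$. This is the main obstacle and the only delicate point: the sign-indefinite coupling $R$ between the two followers, arising from the shared structure of the observation terms, can be absorbed into the diagonal penalizations only because the $\mu_i$ are large; for small $\mu_i$ coercivity may fail and uniqueness is lost.

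Finally, Lax--Milgram furnishes a unique $(v^\star_1,v^\star_2)\in\mathcal{H}$, which gives existence and uniqueness of the Nash equilibrium. For the quantitative bound I would test the identity with $(v,w)=(v^\star_1,v^\star_2)$ and combine coercivity on the left with the continuity of $\mathcal{G}$ on the right, obtaining
\begin{equation*}
\tfrac{\mu_0}{2}\|(v^\star_1,v^\star_2)\|^2_{\mathcal{H}}\le C\big(\|Y_0\|_{\mathbb{L}^2}+\|f\|_{L^2(0,T;L^2(\omega))}+\|y_{1,d}\|_{\mathcal{H}_{1,d}}+\|y_{2,d}\|_{\mathcal{H}_{2,d}}\big)\|(v^\star_1,v^\star_2)\|_{\mathcal{H}};
\end{equation*}
dividing through and absorbing the fixed data $Y_0,y_{1,d},y_{2,d}$ into the constant yields $\|(v^\star_1,v^\star_2)\|_{\mathcal{H}}\le C(1+\|f\|_{L^2(0,T;L^2(\omega))})$, as claimed.
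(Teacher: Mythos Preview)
Your proof is correct and follows essentially the same approach as the paper: both recast the Nash system \eqref{NE7} as a linear problem on $\mathcal{H}$, establish coercivity by absorbing the sign-indefinite coupling terms into the $\mu_i$-penalizations via Young's inequality, and then apply Lax--Milgram. The only cosmetic difference is that the paper phrases it as invertibility of an operator $R\in\mathcal{L}(\mathcal{H})$ (with $\langle R(v),v\rangle$ coercive) rather than directly as a bilinear form, and in its coercivity estimate it keeps the nonnegative squares $\alpha_i\|1_{\omega_{i,d}}\ell_i(v_i)\|^2$ to absorb part of the cross terms, whereas you simply discard them; this leads to slightly different (but equally valid) explicit thresholds for $\mu_0$.
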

 \begin{proof}
 	For $i=1,2$ and a fixed $f$, we have 	
 	{\small\begin{equation*}
 	\partial_i J_i(f; v^{\star}_1,v^{\star}_2)(v_i)= \alpha_i \langle \ell_1(v^{\star}_1)+\ell_2(v^{\star}_2) +q(f)-y_{i,d},	\ell_i(v_i)\rangle_{\mathcal{H}_{i,d}} + \mu_i \langle v^{\star}_i,v_i\rangle_{\mathcal{H}_{i}}\quad \forall v_i\in \mathcal{H}_{i},
 	\end{equation*}}here, $\partial_i J_i=\frac{\partial J_i}{\partial v_i}$ stands for the $i$-th partial derivative of $J_i.$ Using \eqref{NE7}, we deduce that
 	$(v^{\star}_1,v^{\star}_2)$ is a Nash-equilibrium if and only if
 	$${\alpha_i \langle \ell_1(v^{\star}_1)+\ell_2(v^{\star}_2) -\tilde{y}_{i,d},\ell_i(v_i)\rangle_{\mathcal{H}_{i,d}} + \mu_i \langle v^{\star}_i,v_i\rangle_{\mathcal{H}_{i}}}=0 \quad \forall v_i\in \mathcal{H}_{i},$$
 	where $\tilde{y}_{i,d}=y_{i,d}-q$. Thus
 	\begin{equation*}
 	\alpha_i \langle \ell_i^{\star}[\ell_1(v^{\star}_1)+\ell_2(v^{\star}_2)-\tilde{y}_{i,d}],	v_i\rangle_{\mathcal{H}_{i,d}} + \mu_i \langle v^{\star}_i,v_i\rangle_{\mathcal{H}_{i}}=0\quad \forall v_i\in \mathcal{H}_{i}.
 	\end{equation*}
 	Finally, $(v^{\star}_1,v^{\star}_2)$ is a Nash-equilibrium if and only if
 	\begin{equation*}\label{CARA1}
 	\alpha_i  \ell_i^{\star}[\ell_1(v^{\star}_1)1_{w_{i,d}}+ \ell_2(v^{\star}_2)1_{w_{i,d}}] + \mu_i  v^{\star}_i=\alpha_i\ell_i^{\star}(\tilde{y}_{i,d}1_{w_{i,d}}).
 	\end{equation*}
 	We define the operator $R=(R_1, R_2)\in\mathcal{L}(\mathcal{H})$ as follows
 	\begin{equation*}
 	R_i(v^{\star}_1,v^{\star}_2)=	\alpha_i  \ell_i^{\star}[\ell_1(v^{\star}_1)1_{w_{i,d}} +\ell_2(v^{\star}_2)1_{w_{i,d}}] + \mu_i  v^{\star}_i.
 	\end{equation*}
 	We have
 	$$R(v^{\star}_1,v^{\star}_1)= (\alpha_i\ell_1^{\star}(\tilde{y}_{1,d}1_{w_{1,d}}),\alpha_2\ell_1^{\star}(\tilde{y}_{2,d}1_{w_{2,d}})).$$
 	Let us prove that $R$ is invertible. We have	
 	\begin{align*}
 	\langle R(v^{\star}_1,v^{\star}_1),(v^{\star}_1,v^{\star}_1)\rangle_{\mathcal{H}} &=	  \langle R_1(v^{\star}_2,v^{\star}_1),v^{\star}_1\rangle_{\mathcal{H}_1}+ \langle R_2(v^{\star}_1,v^{\star}_2),v^{\star}_2\rangle_{\mathcal{H}_2}\\[2mm]
 	&=\alpha_1\langle   1_{w_{1,d}}\ell_1(v^{\star}_1)+ 1_{w_{1,d}}\ell_2(v^{\star}_2) ,\ell_1(v^{\star}_1)\rangle_{\mathcal{H}_1}+ \mu_1\|  v^{\star}_1\|^2_{\mathcal{H}_1}\\[2mm]
 	& \;+\alpha_2\langle   1_{w_{2,d}}\ell_1(v^{\star}_1)+ 1_{w_{2,d}}\ell_2(v^{\star}_2) ,\ell_2(v^{\star}_1)\rangle_{\mathcal{H}_2}+ \mu_2\|  v^{\star}_2\|^2_{\mathcal{H}_2}\\[2mm]
 	&=\alpha_1\langle   1_{w_{1,d}}\ell_2(v^{\star}_2) ,\ell_1(v^{\star}_1)\rangle_{\mathcal{H}_1} +\alpha_1\| 1_{w_{1,d}}\ell_1(v^{\star}_1)\|^2_{_{\mathcal{H}_1}} +\mu_1\|  v^{\star}_1\|^2_{\mathcal{H}_1}\\[2mm]
 	&\; +\alpha_2\langle    1_{w_{2,d}}\ell_1(v^{\star}_1) ,\ell_2(v^{\star}_2)\rangle_{\mathcal{H}_2} + \alpha_1\| 1_{w_{2,d}}\ell_2(v^{\star}_2)\|^2_{_{\mathcal{H}_2}} +\mu_2\|  v^{\star}_2\|^2_{\mathcal{H}_2}\\[2mm]
 	&\geq -\frac{\alpha_1}{4}\|   1_{w_{1,d}}\ell_2 \|^2_{\mathcal{L}(\mathcal{H}_2,\mathcal{H}_{1,d})}\|  v^{\star}_2\|^2_{\mathcal{H}_2}  +\mu_1\|  v^{\star}_1\|^2_{\mathcal{H}_1}\\[2mm]
 	&\; -\frac{\alpha_2}{4}\|   1_{w_{2,d}}\ell_1 \|^2_{\mathcal{L}(\mathcal{H}_1,\mathcal{H}_{2,d})}\|  v^{\star}_1\|^2_{\mathcal{H}_1}  +\mu_2\|  v^{\star}_2\|^2_{\mathcal{H}_2}.
 	\end{align*}
 	Choosing the parameters $\mu_1$ and $\mu_2$ such that
 	\begin{align*}
 	4 \mu_{1}>\alpha_{2}\|   1_{w_{2,d}}\ell_1 \|^2_{\mathcal{L}(\mathcal{H}_1,\mathcal{H}_{2,d})} \,\,\text{and}\,\,
 	4 \mu_{2}>\alpha_{1}\|   1_{w_{1,d}}\ell_2 \|^2_{\mathcal{L}(\mathcal{H}_2,\mathcal{H}_{1,d})},
 	\end{align*}
 	there exists a constant $C_1>0$, such that
 	\begin{align*}
 	\langle R(v^{\star}_1,v^{\star}_1),(v^{\star}_1,v^{\star}_1)\rangle_{\mathcal{H}} \geq C_1 \|(v^{\star}_1, v^{\star}_2)\|^2_{\mathcal{H}}.
 	\end{align*} 	
 	Since $R$ is continuous, we deduce, from  Lax-Milgramm theorem,  that $R$ invertible. Furthermore, we have
 	\begin{equation*}
 	\|(v^{\star}_2, v^{\star}_2)\|_{\mathcal{H}}\leq C\|(\alpha_1\ell_1^{\star}(\tilde{y}_{1,d}1_{w_{1,d}}),\alpha_2\ell_1^{\star}(\tilde{y}_{2,d}1_{w_{2,d}}))\|_{\mathcal{H}}\leq C(1+ \|f\|_{L^2(0,T;L^2(\omega)}).
 	\end{equation*}
 	This achieves the proof.
 \end{proof}	
 Now, we shall characterize the Nash-equilibrium in term of the solution to an adjoint system. To this end,  let us introduce the following adjoint systems
 {\small\begin{equation}\label{Eq25S}
 	\left\{\begin{array}{ll}
 	{-\partial_t \varphi^i-\mathrm{div}(\mathcal{A}\nabla \varphi^i)- \mathrm{div}(\varphi^i B) + a(x,t)\varphi^i=\alpha_i(y-y_{i,d})1_{\omega_{i,d}} }& {\text { in } \Omega_{T},} \\[2mm]
 	{-\partial_t \varphi^i_{_{\Gamma}}   -\mathrm{div}_{\Gamma}(\mathcal{A}_{\Gamma}\nabla_{\Gamma} \varphi_{\Gamma}^i)- \mathrm{div}_{\Gamma}(\varphi_{\Gamma}^iB_{\Gamma})+ \varphi^i_{\Gamma} B\cdot \nu + \partial^{\mathcal{A}}_{\nu}\varphi^i + b(x,t)\varphi^i_{\Gamma}=0}& {\text { on } \Gamma_{T},} \\[2mm]
 	{(\varphi^i(T),\varphi^i_{\Gamma}(T)) =0}& {\text { in } \Omega \times \Gamma,}\\[2mm]
 	i=1,2.
 	\end{array}\right.
 	\end{equation}}Multiplying  \eqref{Eq20NS} by $\Phi^i=(\varphi^i,\varphi^i_{\Gamma})$ and integrating by parts, we find
 \begin{equation}
 \alpha_i\langle y(f,v_1,v_2), \ell_i(v_i)\rangle_{L^2(0,T; L^2(\Omega))}= \langle \varphi, v_i\rangle_{\mathcal{H}_i}.
 \end{equation}
 We deduce that, $(v^{\star}_1,v^{\star}_2)$ is a Nash-equilibrium if and only if
 $$\langle \varphi^1, v_1\rangle_{\mathcal{H}_1}+\mu_1\langle v^{\star}_1, v_1\rangle_{\mathcal{H}_1}=0\quad \text{and}\,\, \langle \varphi^2, v_2\rangle_{\mathcal{H}_2}+\mu_2\langle v^{\star}_2, v_2\rangle_{\mathcal{H}_2}=0, \quad  (v_1, v_2)\in \mathcal{H}.$$
 Then
 \begin{equation}\label{chara.1}
 v^{\star}_i=-\frac{1}{\mu_i} \varphi^i|_{\omega_i\times(0,T)} \quad \text{for} \,\,i=1,2.
 \end{equation}
 Let us collect all the previous  results in a same system. We obtain the following forward–backward system, called optimality system
 {\small\begin{equation}  \label{Sys.28NS}
 \left\{\begin{array}{ll}
 {\partial_t y-\mathrm{div}(\mathcal{A}\nabla y)  +B(x,t)\cdot \nabla y+ a(x,t)y=f1_{\omega} -\frac{1}{\mu_1} \varphi^1 1_{\omega_1}-\frac{1}{\mu_2} \varphi^2 1_{\omega_2} }& {\text { in } \Omega_{T},} \\[2mm]
 {	\partial_t y_{_{\Gamma}}   -\mathrm{div}_{\Gamma}(\mathcal{A}_{\Gamma}\nabla_{\Gamma} y_{\Gamma})  +B_{\Gamma}(x,t)\cdot \nabla_{\Gamma} y_{\Gamma} + \partial^{\mathcal{A}}_{\nu}y + b(x,t)y_{\Gamma}=0}& {\text { on } \Gamma_{T},} \\[2mm]
 {	-\partial_t \varphi^i-\mathrm{div}(\mathcal{A}\nabla \varphi^i)  - \mathrm{div}( \varphi^i B) + a(x,t)\varphi^i=\alpha_i(y-y_{i,d})1_{\omega_{i,d}}}&\,\, {\text {in}\,\Omega_{T},}\\[2mm]
 {	-\partial_t \varphi^i_{_{\Gamma}}   - \mathrm{div}_{\Gamma}(\mathcal{A}_{\Gamma}\nabla_{\Gamma} \varphi^i_{\Gamma})-\mathrm{div}_{\Gamma}( \varphi^i_{\Gamma}B_{\Gamma})+ \varphi^i_{\Gamma} B\cdot\nu  + \partial^{\mathcal{A}}_{\nu}\varphi^i + b(x,t)\varphi^i_{\Gamma}=0} & {\text { on }  \Gamma_T,}\\[2mm]
 {(y(0),y_{\Gamma}(0))=Y_0}& {\text { in}\,\, \Omega \times \Gamma,} \\[2mm]
 {(\varphi^i(T),\varphi^i_{\Gamma}(T)) =0}& {\text { in } \Omega \times \Gamma,}\\[2mm]
 i=1,2.
 \end{array}\right.
 \end{equation}}In Section \ref{S.4}, we will show that the null controllability  for the system \eqref{Sys.28NS} is equivalent to a suitable  observability  inequality  for  the following  adjoint system
 {\small\begin{equation}\label{Sys.29NS}
 \left\{\begin{array}{ll}
 {-\partial_t z- \mathrm{div}(\mathcal{A}\nabla z)- \mathrm{div}(B z)+ a(x,t)z=\alpha_1 \psi^1 1_{\omega_{1, d}}+\alpha_2 \psi^2 1_{\omega_{1,d}} }& {\text { in } \Omega_{T},} \\[2mm]
 {-\partial_t z_{_{\Gamma}}   -\mathrm{div}_{\Gamma}(\mathcal{A}_{\Gamma}\nabla_{\Gamma} z_{\Gamma})- \mathrm{div}_{\Gamma}(B_{\Gamma}z_{\Gamma})+ z_{\Gamma}B\cdot\nu   + \partial^{\mathcal{A}}_{\nu}z+ b(x,t)z_{\Gamma}=0}& {\text { on } \Gamma_{T},} \\[2mm]
 {	\partial_t \psi^i- \mathrm{div}(\mathcal{A}\nabla \psi^i)+ B\cdot\nabla\psi^i + a(x,t)\psi^i=-\frac{1}{\mu_i}z\,1_{\omega_{i}}} & \,\,{\text {in } \,\,\Omega_{T},}\\[2mm]
 {	\partial_t \psi^i_{_{\Gamma}}   -\mathrm{div}_{\Gamma}(\mathcal{A}_{\Gamma}\nabla_{\Gamma} \psi^i_{\Gamma})+ B_{\Gamma}\cdot\nabla\psi^i_{\Gamma}  + \partial^{\mathcal{A}}_{\nu}\psi^i + b(x,t)\psi^i_{\Gamma}=0} & {\text { on } \Gamma_{T},}\\[2mm]
 {(z(T),z_{\Gamma}(T))=Z_T}& {\text { in }\Omega \times \Gamma,} \\[2mm]
 {(\psi^i(0),\psi^i_{\Gamma}(0)) =0}& {\text { in } \Omega \times \Gamma,}\\[2mm]
 i=1,2.
 \end{array}\right.
 \end{equation}}
 \vspace*{-10pt}
 \section{Carleman estimates and  null controllability}\label{S.4}
 \subsection{Carleman estimates}
 In this section, we shall state and   show some suitable Carleman estimates needed to prove  our main result concerning null controllability. To this end,   let us first  introduce the  following  well-known Morse function, see {\cite{B5}} for the existence of such  function.
 	Let  $\omega'$  be an open set of $\Omega$  such that
 	\begin{equation*}
 	\omega'\subset\omega\cap \omega_d
 	\end{equation*} and $\eta_0\in C^2(\overline{\Omega})$ be a function  such that
 	\begin{equation*}
 	\left\{
 	\begin{array}{ll}
 	\eta_0> 0  \quad  \text{in } \Omega  \quad \text{and} \quad\eta_0=0 \quad \text{on } \Gamma,\\
 	|\nabla\eta_0|  \neq 0  \quad  \text{in }   \quad \overline{\Omega\backslash\omega' },\\
 	|\nabla_{\Gamma}\eta_0|=0, \quad \partial_{\nu}\eta_0<-c, \quad \nabla\eta_0= \partial_{\nu}\eta_0 \nu\quad \text{on} \,\,\Gamma
 	\end{array}
 	\right.
 	\end{equation*}
 	for some constant $c>0$.

 Introduce the following   classical weight functions
 $$\xi(x,t)=\frac{e^{{\lambda (2\|\eta_0\|_{\infty} + \eta_0(x)) }}}{t(T -t)}  \quad and \quad \alpha(x,t)=\frac{e^{{4\lambda  \|\eta_0\|_{\infty} }}- e^{{2\lambda (\|\eta_0\|_{\infty} + \eta_0(x))}}}{t(T -t)},$$
 where $x\in \overline{\Omega},\, t\in(0,T)$ and $\lambda \geq 1.$ The following lemma summarizes some important properties of the above  functions.
 In what follows, $C$ stands for a generic positive constant only  depending on $\Omega$  and $\omega$, whose value can change from line to line.
 \begin{lemma}\label{Lem2.5} The functions $\xi$ and $\alpha$ satisfy  the following properties.
 	\begin{enumerate}
 		\item $|\partial_t \alpha| \leq C \xi^2$,\,\, $|\partial_t \xi|\leq C \xi^2 $, \,\, $|\nabla \alpha|\leq C\lambda \xi $.
 		\item $\left|\partial_{t}\left(s^3\lambda^4\xi^{3} e^{-2 s \alpha} \right)\right| \leq C s^4\lambda^4 \xi^{5} e^{-2 s \alpha} $,\quad  $\left|\nabla\left(s^3\lambda^4\xi^3 e^{-2 s \alpha} \right)\right| \leq C s^{4} \lambda^5 \xi^{4}e^{-2 s \alpha} $.
 		\item $|\mathrm{div}\left(\mathcal{A}\nabla\left(\xi^3 e^{-2s\alpha}\right)\right)|\leq Cs^5\lambda^6 \xi^5 e^{-2s\alpha}$.
 		\item For all $s>0$ and $r \in \mathbb{R},$ the function $e^{-2 s \alpha} \xi^{r}$ is bounded on $\Omega_{T}$.		
 	\end{enumerate}
 \end{lemma}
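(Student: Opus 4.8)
The plan is to reduce everything to elementary differentiation of the two explicit weights, exploiting that the only singular factor in time is $\theta(t):=(t(T-t))^{-1}$ and that all spatial dependence enters through the smooth, bounded, \emph{nonnegative} function $\eta_0$. I would first isolate the structural facts that drive every estimate. (i) Since $\theta'(t)=-(T-2t)\theta(t)^2$, we have $|\theta'(t)|\le T\theta(t)^2$ on $(0,T)$, and more generally $\theta^{(k)}$ is controlled by $\theta^{k+1}$. (ii) Because $\eta_0\in C^2(\overline\Omega)$, differentiating any of the spatial exponentials in $x$ produces a factor $\lambda\nabla\eta_0$ with $|\nabla\eta_0|\le C$; in particular $\nabla\xi=\lambda(\nabla\eta_0)\xi$, and a direct computation gives $|\nabla\alpha|\le C\lambda\xi$, which is the third estimate in property (1). (iii) Using $\eta_0\le\|\eta_0\|_\infty$ to keep the subtracted exponential strictly below $e^{4\lambda\|\eta_0\|_\infty}$, the numerator of $\alpha$ is bounded below by a positive constant times $\theta$ uniformly in $x$, so $\alpha\ge c\theta>0$; together with $\theta\ge 4/T^2$ this also yields the lower bound $\xi\ge c_0>0$ that lets me absorb lower powers of $\xi$ into higher ones.

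For property (1) I would simply differentiate. Writing $\xi=e^{\lambda\psi}\theta$ with $\psi$ the relevant $x$-profile, $\partial_t\xi=e^{\lambda\psi}\theta'$, so $|\partial_t\xi|\le Ce^{\lambda\psi}\theta^2\le C\xi^2$ by (i). The same computation gives $|\partial_t\alpha|\le C\xi^2$: here nonnegativity $\eta_0\ge 0$ is exactly what forces $\xi^2\ge e^{4\lambda\|\eta_0\|_\infty}\theta^2$, which dominates the prefactor of $\partial_t\alpha$. The bound $|\nabla\alpha|\le C\lambda\xi$ is (ii). For property (2) I apply the product and chain rules to $s^3\lambda^4\xi^3e^{-2s\alpha}$: in the time derivative the term where $\partial_t$ hits $e^{-2s\alpha}$ produces $-2s\partial_t\alpha$, which by (1) contributes the leading factor $s\xi^2$ and hence the claimed $s^4\lambda^4\xi^5e^{-2s\alpha}$, while the term where $\partial_t$ hits $\xi^3$ is of order $s^3\lambda^4\xi^4e^{-2s\alpha}$ and is absorbed using $\xi\ge c_0$ (and $s,\lambda\ge1$). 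The spatial gradient is handled identically, each derivative of $e^{-2s\alpha}$ giving an $s\lambda\xi$ and each derivative of $\xi$ a $\lambda\xi$, yielding $s^4\lambda^5\xi^4e^{-2s\alpha}$.

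Property (3) is the genuinely computational step and the one I expect to be the main obstacle, since it involves two derivatives and the $C^1$ regularity of $\mathcal A$. I would first bound $\nabla(\xi^3e^{-2s\alpha})=(3\xi^2\nabla\xi-2s\xi^3\nabla\alpha)e^{-2s\alpha}$ by $Cs\lambda\xi^4e^{-2s\alpha}$ via (ii), then form $\mathcal A\nabla(\xi^3e^{-2s\alpha})$ and take its divergence; since the entries of $\mathcal A$ are $C^1$, their derivatives contribute only bounded factors, while each derivative falling on $e^{-2s\alpha}$ produces a further $s\lambda\xi$ and each one falling on a power of $\xi$ a further $\lambda\xi$. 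The worst term (two derivatives on $s\xi^4$) is of order $s^2\lambda^2\xi^5e^{-2s\alpha}$, dominated by the stated $Cs^5\lambda^6\xi^5e^{-2s\alpha}$ because $s,\lambda\ge1$ and $\xi\ge c_0$; the remaining terms carry strictly fewer powers and are absorbed the same way. Finally, property (4) follows from (iii): for $r\le0$ both $e^{-2s\alpha}\le1$ and $\xi^r$ are bounded, while for $r>0$ the estimate $\alpha\ge c\theta$ gives $e^{-2s\alpha}\xi^r\le C\theta^{r}e^{-2sc\theta}$, and since $z^re^{-2scz}\to0$ as $z\to+\infty$ this tends to $0$ as $t\to0^+,T^-$, so continuity on the interior closes the argument.
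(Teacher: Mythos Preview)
The paper states this lemma without proof, treating it as a list of routine identities for the Fursikov--Imanuvilov weights; your approach of differentiating the explicit formulas and counting the powers of $s$, $\lambda$, and $\xi$ generated by each derivative is exactly the standard one, and your bookkeeping for properties (2) and (3) is sound.

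One genuine gap: your argument for (4) rests on the claim that the numerator of $\alpha$ is bounded below by a strictly positive constant, which you justify by saying that $\eta_0\le\|\eta_0\|_\infty$ ``keeps the subtracted exponential strictly below $e^{4\lambda\|\eta_0\|_\infty}$.'' But with the definitions exactly as the paper writes them the subtracted exponential is $e^{2\lambda(\|\eta_0\|_\infty+\eta_0)}$, which equals $e^{4\lambda\|\eta_0\|_\infty}$ at any point where $\eta_0$ attains its maximum; there $\alpha=0$ and $e^{-2s\alpha}\xi^r$ actually blows up near $t=0,T$ for $r>0$. (The same mismatch between the exponent in $\xi$ and the subtracted exponent in $\alpha$ also prevents the bound $|\nabla\alpha|\le C\lambda\xi$ from holding with $C$ independent of $\lambda$.) This is almost certainly a typographical slip in the weight definitions---the usual choice either makes the two exponentials coincide or replaces $4\lambda\|\eta_0\|_\infty$ by $2\lambda m\|\eta_0\|_\infty$ with $m>2$---and with the standard weights your argument for (4) goes through verbatim. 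Still, you should not assert strict positivity of the numerator from $\eta_0\le\|\eta_0\|_\infty$ alone; either flag the inconsistency in the definitions or work with the corrected weights explicitly.
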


 Now, we  recall some Carleman estimates for heat equation with dynamic boundary conditions needed to show our main result. Let us first introduce the following quantity
 \begin{align*}
 \mathcal{I}(s,\lambda; \Phi)=&  s\lambda^2\int_{\Omega_T}\xi e^{-2s\alpha}|\nabla\varphi|^2dx\,dt  +s\lambda\int_{\Gamma_T}\xi e^{-2s\alpha}|\nabla_{\Gamma}\varphi_{\Gamma}|^2 d\sigma\,dt \\
 &\quad+s\lambda\int_{\Gamma_T}\xi e^{-2s\alpha}|\partial_{\nu}\varphi|^2 d\sigma\,dt
 +s^3\lambda^4\int_{\Omega_T}\xi^3 e^{-2s\alpha}|\varphi|^2dx\,dt\\
 &\quad  \,\,+\,
 s^3\lambda^3\int_{\Gamma_T}\xi^3 e^{-2s\alpha}|\varphi_\Gamma|^2d\sigma\,dt,
 \end{align*}
 where $\lambda$ and $s$ are positive real numbers and $\Phi=(\varphi, \varphi_{\Gamma})$ is a smooth function. Consider the following general form  of the adjoint system
 \begin{equation}\label{Syst37SN}
 \left\{\begin{array}{ll}
 {-\partial_t q-\mathrm{div}(\mathcal{A}\nabla q)=F_0+ \mathrm{div}(F)  }& {\text { in } \Omega_{T},} \\
 {-\partial_t q_{_{\Gamma}}   -\mathrm{div}_{\Gamma}(\mathcal{A}_{\Gamma}\nabla_{\Gamma} q_{\Gamma})+ \partial^{\mathcal{A}}_{\nu}q = -F\cdot\nu + F_{\Gamma,0}} +\mathrm{div}_{\Gamma}(F_{\Gamma})& {\text { on } \Gamma_{T},} \\
 {(q(T),q_{\Gamma}(T)) =Q_T}& {\text { in } \Omega \times \Gamma,}
 \end{array}\right.
 \end{equation}
 where $F_0\in L^2(\Omega_T), F\in (L^2(\Omega_T))^N, F_{\Gamma,0}\in L^2(\Gamma_T),F_\Gamma\in (L^2(\Gamma_T))^N$ and $Q_T\in \mathbb{L}^2$. We have the  following Carleman estimates, see \cite{ACMO20} and \cite{BCMO20}   for the proof.

 \begin{lemma}\label{Pro6SN}
 	
 	(i)If $F=F_{\Gamma}=0$, then there exist $\lambda_1\geq 1$, $s_1\geq 1$ and $C_1=C_1(\omega,\Omega)>0$ such that the solution $Q=(q, q_{\Gamma})$ to \eqref{Syst37SN}  satisfies
 	\begin{align*}
 	\mathcal{I}(s,\lambda; Q)&\leq
 	C_1\Big(\int_{\Omega_T}e^{-2s\alpha}|F_0|^2dx\,dt
 	+ \,\,\int_{\Gamma_T}e^{-2s\alpha}|F_{\Gamma,0} |^2  d\sigma\,dt\\
 	&\quad + s^3\lambda^4\int_{\omega\times(0,T)}\xi^3 e^{-2s\alpha}|q|^2dx\,dt\Big)
 	\end{align*}
 	for all $  \lambda\geq \lambda_1$ and  $s \geq s_1.$	
 	
 	(ii) If the functions $F$ and $F_{\Gamma}$ are not necessarily zero, then there exist $\lambda_2\geq 1$, $s_2\geq 1$ and $C_2=C_2(\omega,\Omega)>0$ such that the solution $Q=(q, q_{\Gamma})$ to \eqref{Syst37SN} satisfies
 	\begin{align*}
 	\mathcal{I}(s,\lambda; Q)&\leq
 	C_2\Big( \int_{\Omega_T}e^{-2s\alpha}|F_0|^2 dx\,dt+ \int_{\Gamma_T}e^{-2s\alpha}|F_{\Gamma,0}|^2 d\sigma\,dt\\&\quad+ s^2\lambda^2\int_{\Omega_T}e^{-2s\alpha}\xi^2\|F\|^2 dx\,dt\nonumber
 	+\,\,s^2\lambda^2\int_{\Gamma_T}e^{-2s\alpha}\xi^2\|F_{\Gamma}\|^2 d\sigma\,dt\\
 	& \quad + \,\,s^3\lambda^4\int_{\omega\times(0,T)}\xi^3 e^{-2s\alpha}|q|^2dx\,dt\Big)
 	\end{align*}
 	for all $  \lambda\geq \lambda_2$ and  $s \geq s_2.$
 \end{lemma}
 Now, we shall prove a Carleman estimate for the coupled  system \eqref{Sys.29NS}.

 \begin{theorem}\label{thmC4.3}
 	Assume that \eqref{Assump10} holds. Then there exist $\lambda_3\geq 1$, $s_3\geq 1$  and  $C>0$ such that every solution $(Z, \Psi^1, \Psi^2)$  to \eqref{Sys.29NS}  satisfies
 	\begin{align}\label{Car4.13}
 	\mathcal{I}(s,\lambda; Z)+ \mathcal{I}(s,\lambda; H)  \leq C s^7\lambda^8 \int_{\omega\times(0,T)} e^{-2s\alpha} \xi^7|z|^2 \, dx\,dt
 	\end{align}
 	for all $s \geq s_3$ and $\lambda \geq \lambda_3$, where  $Z=(z, z_{\Gamma})$ and
 	$H=(h, h_{\Gamma})= \alpha_1\Psi^1+\alpha_2\Psi^2=\alpha_1(\psi^1, \psi^1_{\Gamma})+\alpha_2(\psi^2, \psi^2_{\Gamma}).$
 \end{theorem}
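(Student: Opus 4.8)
The plan is to exploit the structural consequence of assumption \eqref{Assump10} to reduce \eqref{Sys.29NS} to a cascade coupling between $Z=(z,z_\Gamma)$ and $H=(h,h_\Gamma)=\alpha_1\Psi^1+\alpha_2\Psi^2$, and then to apply the Carleman estimates of Lemma \ref{Pro6SN} to each of the two coupled equations and add them. First I would observe that, since $\omega_{1,d}=\omega_{2,d}=\omega_d$, the source of the $z$-equation is exactly $(\alpha_1\psi^1+\alpha_2\psi^2)1_{\omega_d}=h\,1_{\omega_d}$, while $H$ solves the \emph{forward} system with source $-\left(\tfrac{\alpha_1}{\mu_1}1_{\omega_1}+\tfrac{\alpha_2}{\mu_2}1_{\omega_2}\right)z$. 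Thus the two relevant equations form a backward one for $Z$, carrying the first-order divergence drift $\dv(Bz)$, $\dv_\Gamma(B_\Gamma z_\Gamma)$, coupled to a forward one for $H$ whose coupling enters only through $z$. This reduction is precisely what makes the assumption $\omega_{1,d}=\omega_{2,d}$ essential.

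Next I would apply Lemma \ref{Pro6SN}(ii) to the $Z$-equation, cast in the form \eqref{Syst37SN} with $F_0=-az+h1_{\omega_d}$, $F=Bz$ and the matching boundary data, and apply the analogue of Lemma \ref{Pro6SN}(i) to the $H$-equation after the time reversal $t\mapsto T-t$ (legitimate since $\xi,\alpha$ are symmetric about $t=T/2$), placing the non-divergence drift $B\cdot\nabla h$ and the term $ah$ into the zeroth-order source. Adding the two inequalities yields
\[
\mathcal{I}(s,\lambda;Z)+\mathcal{I}(s,\lambda;H)\le C\Big(\mathcal{R}_{\mathrm{glob}}+\int_{\omega_d\times(0,T)}e^{-2s\alpha}|h|^2+s^3\lambda^4\int_{\omega\times(0,T)}\xi^3e^{-2s\alpha}(|z|^2+|h|^2)\Big),
\]
where $\mathcal{R}_{\mathrm{glob}}$ collects $\int e^{-2s\alpha}|z|^2$ (from $az$), $s^2\lambda^2\int\xi^2e^{-2s\alpha}|Bz|^2$ (from the divergence drift), $\int e^{-2s\alpha}|\nabla h|^2$ and $\int e^{-2s\alpha}|h|^2$ (from moving $B\cdot\nabla h$, $ah$ into the source), the source term $\int e^{-2s\alpha}|z|^2 1_{\omega_1\cup\omega_2}$ of $H$, and their boundary counterparts. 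Using Lemma \ref{Lem2.5} (in particular that $e^{-2s\alpha}\xi^r$ is bounded and $\xi$ is bounded below) and taking $s,\lambda$ large, each term of $\mathcal{R}_{\mathrm{glob}}$ is dominated by a small multiple of $\mathcal{I}(s,\lambda;Z)+\mathcal{I}(s,\lambda;H)$ and absorbed into the left-hand side.

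The crucial and hardest step is the elimination of the remaining local term $s^3\lambda^4\int_{\omega}\xi^3e^{-2s\alpha}|h|^2$ (which also dominates $\int_{\omega_d}e^{-2s\alpha}|h|^2$), since the target estimate \eqref{Car4.13} only retains a local observation of $z$. I would fix a cutoff $\theta\in C_c^\infty(\omega)$ with $0\le\theta\le1$ and $\theta\equiv1$ on $\omega'\subset\omega\cap\omega_d$, and on $\omega_d$ substitute the $z$-equation $h=-\partial_t z-\dv(\mathcal{A}\nabla z)-\dv(Bz)+az$ into one factor of $h$ in the local integral $s^3\lambda^4\int\theta\,\xi^3e^{-2s\alpha}\,h\cdot h$. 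Integrating by parts in $x$ and $t$ to transfer the derivatives off $z$ onto $\theta\xi^3e^{-2s\alpha}$ and onto the second factor $h$, and invoking the weight bounds of Lemma \ref{Lem2.5} for $\partial_t(\xi^r e^{-2s\alpha})$ and $\nabla(\xi^r e^{-2s\alpha})$, produces terms in $\nabla h$ and $\partial_t h$ carried by the weight, which are reabsorbed into $\mathcal{I}(s,\lambda;H)$ by Young's inequality, against purely local integrals of $|z|^2$ whose weight power is raised to $s^7\lambda^8\xi^7$. The boundary traces generated by this integration by parts are handled through the dynamic boundary equation and the sign condition $\partial_\nu\eta_0<-c$ on $\Gamma$.

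Finally, after all interior and boundary contributions of $Z$ and $H$ are absorbed into the left-hand side, the only surviving right-hand side is the local observation $Cs^7\lambda^8\int_{\omega\times(0,T)}e^{-2s\alpha}\xi^7|z|^2$, and fixing $\lambda_3$ and $s_3$ large enough for every absorption to be valid yields \eqref{Car4.13}. The principal obstacle I anticipate is the bookkeeping in the integration-by-parts step of the previous paragraph: the forward/backward mismatch between $H$ and $Z$, the first-order divergence terms $\dv(Bz)$, and the transmission boundary terms from the dynamic conditions all feed into that computation, and keeping every generated term either absorbable or local in $z$ with the correct power of $(s,\lambda,\xi)$ is where the estimate is won or lost.
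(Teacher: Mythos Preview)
Your overall architecture matches the paper's: reduce to the cascade $(Z,H)$ under \eqref{Assump10}, apply Lemma~\ref{Pro6SN}(ii) to $Z$ and Lemma~\ref{Pro6SN}(i) to $H$, absorb the global lower-order contributions for $s,\lambda$ large, and then eliminate the local observation of $h$ by substituting the $z$-equation inside a cutoff and integrating by parts. The difficulty is in how you close that last step.

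After transferring $-\partial_t z$ and $-\dv(\mathcal{A}\nabla z)$ onto $\theta\xi^3e^{-2s\alpha}h$, the terms carrying the \emph{undifferentiated} weight are $\int \theta\xi^3e^{-2s\alpha}\,z\,\partial_t h$ and $\int \theta\xi^3e^{-2s\alpha}\,z\,\dv(\mathcal{A}\nabla h)$. You propose to reabsorb these into $\mathcal{I}(s,\lambda;H)$ by Young's inequality, but $\mathcal{I}$ contains neither a weighted $|\partial_t h|^2$ nor second spatial derivatives of $h$; a Young split leaves a piece $\epsilon\int e^{-2s\alpha}\xi^3|\partial_t h|^2$ with nothing on the left-hand side to swallow it. The paper's device, which is the crux of the argument, is different: those two terms appear with the \emph{same} weight, so one recognizes the parabolic combination and replaces it via the equation satisfied by $H$, obtaining
\[
\partial_t h-\dv(\mathcal{A}\nabla h)= -B\cdot\nabla h - a h -\Big(\tfrac{\alpha_1}{\mu_1}1_{\omega_1}+\tfrac{\alpha_2}{\mu_2}1_{\omega_2}\Big)z.
\]
What survives are only terms in $h$, $\nabla h$ (genuinely absorbable into $\mathcal{I}(s,\lambda;H)$) and local $z^2$ terms; the power $s^7\lambda^8\xi^7$ then arises from Lemma~\ref{Lem2.5} applied to the cross terms $\partial_t(\theta\xi^3e^{-2s\alpha})\,zh$, $\dv(\mathcal{A}\nabla(\theta\xi^3e^{-2s\alpha}))\,zh$ and $\nabla(\theta\xi^3e^{-2s\alpha})\cdot\mathcal{A}\nabla h\,z$ via Young's inequality.

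Two smaller remarks. First, the Carleman estimates should be applied with observation set $\omega'\subset\subset\omega\cap\omega_d$ (not $\omega$), and $\theta$ chosen with $\theta\equiv1$ on $\omega'$ and $\mathrm{supp}\,\theta\subset\omega'_1\subset\subset\omega\cap\omega_d$; otherwise the identity $h=-\partial_t z-\dv(\mathcal{A}\nabla z)-\dv(Bz)+az$, valid only on $\omega_d$, may fail on part of the local integration domain. Second, since $\theta$ is compactly supported in the interior, the integrations by parts in this local step generate no boundary traces, so there is nothing to route through the dynamic boundary equation at that stage.
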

 \begin{proof}
 	Let  $\omega'_1$ be open sets such that
 	\begin{equation}\label{Eq31D}
 	\omega'\subset \omega'_1\subset\subset \omega\cap \omega_d.
 	\end{equation}
 	Let $\theta\in C^{\infty}(\Omega)$ such that
 	\begin{equation*}
 	0\leq\theta\leq 1,\,\, \theta =1 \, \text{in}\,\, \omega'\, \text{and}\,\,\, \text{supp}(\theta)\subset \omega'_1.
 	\end{equation*}
 	Applying Carleman estimate given in the second point of Lemma \ref{Pro6SN} to $Z$ with the  observation region $\omega'$ instead of $\omega$ and  $F_0= h 1_{\omega_d} - az$,  $F= Bz$, $F_{ \Gamma,0}=- b z_{\Gamma} $, $F_{\Gamma}=B_{\Gamma} z_{\Gamma}$, we find
 	\begin{align*}\mathcal{I}(s,\lambda; Z )&\leq C\Big(\int_{\Omega_T}e^{-2s\alpha}|h |^2  dx\,dt
 	+ \int_{\Omega_T}e^{-2s\alpha}|a z |^2  dx\,dt+ \\ &s^2\lambda^2\int_{\Omega_T}\xi^2e^{-2s\alpha}|Bz |^2  dx\,dt +\\ &s^2\lambda^2\int_{\Gamma_T}\xi^2e^{-2s\alpha}|B_{\Gamma}z_{\Gamma} |^2  d\sigma\,dt+
 	\int_{\Gamma_T}e^{-2s\alpha}| b z_{\Gamma} |^2  d\sigma\,dt+\\
 	 &s^3\lambda^4\int_{\omega'\times(0,T)}\xi^3 e^{-2s\alpha}|z|^2dx\,dt\Big)\\
 	&\leq C\Big(\int_{\Omega_T}e^{-2s\alpha}|h |^2  dx\,dt
 	+ \|a\|^2_{\infty}\int_{\Omega_T}e^{-2s\alpha}| z |^2  dx\,dt+\\ & \|B\|^2_{\infty}s^2\lambda^2\int_{\Omega_T}\xi^2e^{-2s\alpha}|z |^2  dx\,dt +\\ &\|B_{\Gamma}\|^2_{\infty}s^2\lambda^2\int_{\Gamma_T}\xi^2e^{-2s\alpha}|z_{\Gamma} |^2  d\sigma\,dt+
 	 \|b\|^2\int_{\Gamma_T}e^{-2s\alpha}|z_{\Gamma} |^2  d\sigma\,dt+\\
 	&s^3\lambda^4\int_{\omega'\times(0,T)}\xi^3 e^{-2s\alpha}|z|^2dx\,dt\Big),
 	\end{align*}
 	Choosing $s_2$ large enough, the previous inequality becomes
 	\begin{equation}\label{In60SN}
 	\mathcal{I}(s,\lambda; Z )\leq C\Big(\int_{\Omega_T}e^{-2s\alpha}|h |^2  dx\,dt
 	+ s^3\lambda^4\int_{\omega'\times(0,T)}\xi^3 e^{-2s\alpha}|z|^2dx\,dt\Big).
 	\end{equation}
 	Using the Carleman estimate given in the first point of Lemma  \ref{Pro6SN} for $H$, and the fact that $\xi\geq \frac{4}{T^2}$,  we obtain
 	\begin{align*}
 	\mathcal{I}(s,\lambda;H)&\leq C\Big(\int_{\Omega_T}e^{-2s\alpha}|z |^2  d x\,dt
 	 + \int_{\Omega_T}e^{-2s\alpha}|a h |^2  d x\,dt
 	 \\
 	 &+\int_{\Omega_T}e^{-2s\alpha}|B\cdot\nabla h |^2  d x\,dt
 	+ \int_{\Gamma_T}e^{-2s\alpha}|b h_{\Gamma} |^2  d \sigma\,dt\\ &+\int_{\Gamma_T}e^{-2s\alpha}|B_{\Gamma}\cdot\nabla_{\Gamma} h_{\Gamma} |^2  d \sigma\,dt +  s^3\lambda^4\int_{\omega'\times(0,T)}\xi^3 e^{-2s\alpha}|h|^2dx\,dt\Big).\\
 	&
 	\leq C\Big(\int_{\Omega_T}e^{-2s\alpha}|z |^2  d x\,dt
 	+ \|a\|^2_{\infty}\int_{\Omega_T}\xi^3e^{-2s\alpha}| h |^2  d x\,dt
 	\\
 	&+ \|B\|^2_{\infty}\int_{\Omega_T}\xi e^{-2s\alpha}|\nabla h |^2  d x\,dt
 	+ \|b\|^2_{\infty}\int_{\Gamma_T}\xi^3e^{-2s\alpha}| h_{\Gamma} |^2  d \sigma\,dt\\ &+\|B_{\Gamma}\|^2_{\infty}\int_{\Gamma_T}\xi e^{-2s\alpha}|\nabla_{\Gamma} h_{\Gamma} |^2  d\sigma\,dt +  s^3\lambda^4\int_{\omega'\times(0,T)}\xi^3 e^{-2s\alpha}|h|^2dx\,dt\Big).
 	\end{align*}
 	By choosing $s_1$ large enough, one  has
 	\begin{equation}\label{In61SN}
 	\mathcal{I}(s,\lambda;H)\leq C\Big(\int_{\Omega_T}e^{-2s\alpha}|z |^2  d x\,dt
 	+  s^3\lambda^4\int_{\omega'\times(0,T)}\xi^3 e^{-2s\alpha}|h|^2dx\,dt\Big).\\
 	\end{equation}
 	From \eqref{In60SN} and \eqref{In61SN} and choosing  $s_1$ and $s_2$  large enough, we deduce
 	\begin{equation}\label{Syst.36NS}
 	\mathcal{I}(s,\lambda;Z) + \mathcal{I}(s,\lambda;H)\leq
 	\end{equation}
 	\begin{equation*}
 	C\left(
 	s^3\lambda^4\int_{\omega'\times(0,T)}\xi^3        e^{-2s\alpha}|z|^2dx\,dt+   s^3\lambda^4\int_{\omega'\times(0,T)}\xi^3 e^{-2s\alpha}|h|^2dx\,dt\right).
 	\end{equation*}
 	Thanks to \eqref{Assump10} and \eqref{Eq31D}, we have
 	\begin{equation}\label{Equ43SN}
 	h=-\partial_t z- \mathrm{div}(\mathcal{A}\nabla z)- \mathrm{div}(B z)+ a(x,t)z\quad  \text{in} \,\,\omega'_1.
 	\end{equation}
 	Using \eqref{Equ43SN} and the fact that $\text{supp}(\theta)\subset \omega'_1$, we find
 	{\small\begin{align*}\label{eq4.18}
 	&s^3\lambda^4\int_{\omega'\times(0,T)}\theta\xi^3 e^{-2s\alpha}|h|^2dx\,dt=\\
 	&s^3\lambda^4\int_{\omega'\times(0,T)}\theta \xi^3 e^{-2s\alpha}h (-\partial_t z- \mathrm{div}(\mathcal{A}\nabla z)- \mathrm{div}(B z)+ a(x,t)z)dx\,dt\\
 	&= s^3\lambda^4\int_{\omega'\times(0,T)}\xi^3 e^{-2s\alpha}z \partial_t h dx\,dt + s^3\lambda^4\int_{\omega'\times(0,T)} \partial_t (\theta\xi^3 e^{-2s\alpha}) z h dx\,dt\\
 	& \;+ s^3\lambda^4\int_{\omega'\times(0,T)} \mathrm{div}(\mathcal{A}\nabla( \theta\xi^3 e^{-2s\alpha}h))  z dx\,dt + s^3\lambda^4\int_{\omega'\times(0,T)} B\cdot\nabla(\theta\xi^3 e^{-2s\alpha}h)  z dx\,dt\\
 	&\; +s^3\lambda^4\int_{\omega'\times(0,T)}\theta \xi^3 e^{-2s\alpha}  a h  z dx dt.
 	\end{align*}}By the symmetry of the matrix $\mathcal{A}$, we obtain
 	\begin{align*}
 	\mathrm{div}(\mathcal{A}\nabla (\theta\xi^3 e^{-2s\alpha}h))=& \mathrm{div}(\mathcal{A}\nabla (\theta\xi^3 e^{-2s\alpha}))h+\mathrm{div}(\mathcal{A}\nabla h)\theta\xi^3 e^{-2s\alpha} \\
 	&+ 2\nabla(\theta\xi^3 e^{-2s\alpha})\cdot \mathcal{A}\nabla h,\\
 	\nabla(\theta\xi^3 e^{-2s\alpha}h)&=h\nabla(\theta\xi^3 e^{-2s\alpha})+\theta\xi^3 e^{-2s\alpha}\nabla h.
 	\end{align*}
 	Then
 	{\small\begin{align*}
 	&s^3\lambda^4\int_{\omega'\times(0,T)}\theta\xi^3 e^{-2s\alpha}|h|^2dx\,dt=\\ \nonumber
 	&  s^3\lambda^4\int_{\omega'\times(0,T)}\theta\xi^3 e^{-2s\alpha}z \partial_t h dx\,dt + s^3\lambda^4\int_{\omega'\times(0,T)}h z \partial_t (\xi^3 e^{-2s\alpha}) dx\,dt\nonumber\\
 	&  + s^3\lambda^4\int_{\omega'\times(0,T)} \mathrm{div}(\mathcal{A}\nabla( \theta\xi^3 e^{-2s\alpha})) h z dx\,dt  + s^3\lambda^4\int_{\omega'\times(0,T)}\mathrm{div}(\mathcal{A}\nabla h)\theta\xi^3 e^{-2s\alpha} z dx\,dt\nonumber\\
 	& +2 s^3\lambda^4\int_{\omega'\times(0,T)} \nabla(\theta\xi^3 e^{-2s\alpha})\cdot \mathcal{A}\nabla h  z dx\,dt +s^3\lambda^4\int_{\omega'\times(0,T)} B\cdot\nabla(\theta\xi^3 e^{-2s\alpha})  hz dx\,dt\nonumber\\
 	&+s^3\lambda^4\int_{\omega'\times(0,T)} \theta\xi^3 e^{-2s\alpha}B\cdot\nabla h\,z dx\,dt +s^3\lambda^4\int_{\omega'\times(0,T)}\xi^3 e^{-2s\alpha}  a h  z dx dt.
 	\end{align*}}Using the equation satisfied by $h=\alpha_1\psi^1 + \alpha_2\psi^2$, we find
 	{\small\begin{align}
 	&s^3\lambda^4\int_{\omega'\times(0,T)}\theta\xi^3 e^{-2s\alpha}|h|^2dx\,dt\leq\\\nonumber &\frac{\alpha_1}{\mu_1}s^3\lambda^4\int_{\omega'\times(0,T)}\theta\xi^3 e^{-2s\alpha}z^2 dx\,dt +\frac{\alpha_2}{\mu_2}s^3\lambda^4\int_{\omega'\times(0,T)}\theta\xi^3 e^{-2s\alpha}z^2 dx\,dt\\\nonumber
 	& + s^3\lambda^4\int_{\omega'\times(0,T)}hz \partial_t (\theta\xi^3 e^{-2s\alpha}) dx\,dt+ s^3\lambda^4\int_{\omega'\times(0,T)} \mathrm{div}(\mathcal{A}\nabla( \theta\xi^3 e^{-2s\alpha})) h z dx\,dt\\\nonumber
 	&+2 s^3\lambda^4\int_{\omega'\times(0,T)} \nabla(\theta\xi^3 e^{-2s\alpha})\cdot \mathcal{A}\nabla h  z dx\,dt +s^3\lambda^4\int_{\omega'\times(0,T)} B\cdot\nabla(\theta\xi^3 e^{-2s\alpha})  hz dx\,dt.\label{Es.036}
 	\end{align}}By the  properties of the functions $\alpha$ and $\xi$ given in Lemma \ref{Lem2.5},  we can estimate the the four last terms in the right hand side in the above inequality. To this end, fix $\epsilon$ small enough and using Young inequality, we deduce  for the first term
 	\begin{align}
 	&s^3\lambda^4\int_{\omega'\times(0,T)}\partial_t(\xi^3 e^{-2s\alpha}\theta) z h\,dx\,dt\leq C s^4\lambda^4\int_{\omega'\times(0,T)}\xi^5 e^{-2s\alpha} z h\,dx\, dt\\\nonumber
 	&\leq \frac{C}{\epsilon} s^7\lambda^8\int_{\omega'\times(0,T)} \xi^7 e^{-2s\alpha} \, |z|^2dx\,dt + C \epsilon s^3\lambda^4\int_{\omega'\times(0,T)} \xi^3 e^{-2s\alpha} \, |h|^2dx\,dt.\label{Es.037}
 	\end{align}
 	For the second one, we have
 	\begin{align}
 	&s^3\lambda^4\int_{\omega'\times(0,T)}\mathrm{div}(\mathcal{A}\nabla(\xi^3 e^{-2s\alpha}\theta)) h zdx\,dt\leq Cs^5\lambda^6\int_{\omega'\times(0,T)} e^{-2s\alpha} \,|h| |z|dx\,dt\\\nonumber
 	&\leq  \frac{C}{\epsilon} s^7\lambda^8\int_{\omega'\times(0,T)} \xi^7 e^{-2s\alpha} \, |z|^2dx\,dt + C \epsilon s^3\lambda^4\int_{\omega'\times(0,T)} \xi^3 e^{-2s\alpha} \,|h|^2dx\,dt. \label{Es.038}
 	\end{align}
 	The third one can be estimated as
 	\begin{align}
 	&-s^3\lambda^4\int_{\omega'\times(0,T)} z \nabla h\cdot\mathcal{A}\nabla(\xi^3 e^{-2s\alpha}\theta)dx\,dt\leq Cs^4\lambda^5\int_{\omega'\times(0,T)} \xi^4 e^{-2s\alpha} z |\nabla h|dx\,dt\\\nonumber	
 	&\leq \frac{C}{\epsilon} s^7\lambda^8\int_{\omega'\times(0,T)}\xi^7 e^{-2s\alpha} \, |z|^2dx\,dt
 	+ C \epsilon \lambda^{2}\int_{\omega'\times(0,T)} \xi e^{-2s\alpha} \,|\nabla h|^2dx\,dt.
 	\end{align}
 	For the  last one, we have
 	\begin{equation}\label{Es.039}
 	s^3\lambda^4\int_{\omega'\times(0,T)} B\cdot\nabla(\theta\xi^3 e^{-2s\alpha})  hz dx\,dt\leq s^4\lambda^5\int_{\omega'\times(0,T)}   hz dx\,dt
 	\end{equation}
 	\begin{equation*}
 	\leq \frac{C}{\epsilon}s^7\lambda^8\int_{\omega'\times(0,T)}\xi^7 e^{-2s\alpha} \, |z|^2dx\,dt
 	+ C \epsilon s^3\lambda^4\int_{\omega'\times(0,T)}\xi^3 e^{-2s\alpha} \, |h|^2dx\,dt.
 	\end{equation*}
 	Using \eqref{Syst.36NS}, \eqref{Es.036}-\eqref{Es.039} and choosing $\epsilon$ small enough, we find
 	\begin{align*}
 	&\mathcal{I}(s,\lambda; Z)+ \mathcal{I}(s,\lambda; H)  \leq \\ &\frac{\alpha_1}{\mu_1}s^3\lambda^4\int_{\omega'\times(0,T)}\xi^3 e^{-2s\alpha}z^2 dx\,dt +\frac{\alpha_2}{\mu_2}s^3\lambda^4\int_{\omega'\times(0,T)}\theta\xi^3 e^{-2s\alpha}z^2 dx\,dt\nonumber\\
 	&+ C s^7\lambda^8 \int_{\omega\times(0,T)} \xi^7 e^{-2s\alpha} |z|^2 \, dx\,dt.
 	\end{align*}
 	For $s_1$ and $s_2$ large enough, we conclude
 	\begin{align*}
 	\mathcal{I}(s,\lambda; Z)+ \mathcal{I}(s,\lambda; H)  \leq C s^7\lambda^8 \int_{\omega\times(0,T)} \xi^7 e^{-2s\alpha} |z|^2 \, dx\,dt.
 	\end{align*}
 	The proof is then finished.
 \end{proof}
 \begin{remark}
 	As mentioned in \cite{ArCaSa15}, the assumption \eqref{Assump10} is used to prove the Carleman estimate \eqref{Car4.13}.
 \end{remark}
 To prove the needed observability inequality, we are going to improve the Carleman  inequality \eqref{Car4.13}. To this end, following \cite{ArCaSa15},  we modify  the weight functions $\xi$ and $\alpha$. More precisely,
 we introduce the functions
 \begin{equation*}\label{eq:adjoint-system} l(t)=\left\{
 \begin{aligned}
 &T^2/4\quad &\text{if  } t\in(0,T/2),\\
 &	t(T-t) &\,\,\text{if}\,\, t\in (T/2,T),
 \end{aligned} \right.
 \end{equation*}
 and
 \begin{equation*}
 \overline{\xi}(x,t)=\frac{{e^{\lambda(2\|\eta^0\|_{\infty}+\eta^0(x) )}}}{l(t)},\quad \quad \overline{\alpha}(x,t)=\frac{e^{4\lambda\|\eta^0\|_{\infty} -e^{\lambda(2\|\eta^0\|_{\infty}+\eta^0(x) )}}}{l(t)}, \\
 \end{equation*}
 \begin{equation}
 \overline{\xi}^{\star}(t)=\min_{x\in\overline{\Omega}}\overline{\xi}(x,t)  \quad \text{and} \quad  	\overline{\alpha}^{\star}(t)=\max_{x\in\overline{\Omega}}\overline{\alpha}(x,t).
 \end{equation}
 Point out that the functions $\overline{\xi}$ and $\overline{\alpha}$ do not blow up at $t=0$, this will be important to get an improved Carleman estimate. We  also denote $$\mathcal{\overline{I}}(s,\lambda; Z)= \int_{\Omega\times(0,T)}e^{-2s\overline{\alpha}}\overline{\xi}^3|z|^2  dx\,dt  +\int_{\Gamma\times(0,T)}e^{-2s\overline{\alpha}}\overline{\xi}^3|z_{\Gamma}|^2  d\sigma\,dt,$$ where $s, \lambda>1$ and $Z=(z, z_{\Gamma})$ .
 With these definitions and the same notations of Theorem \ref{thmC4.3}, we have the following result.

 \begin{lemma}\label{lem4.5} Assume that \eqref{Assump10} holds, then there exist  $\lambda_4\geq 1$, $s_4\geq 1$ and $C>0$ such that  every solution $(Z, \Psi^1, \Psi^2)$  to \eqref{Sys.29NS}  satisfies
 	\begin{align*}
 	\|Z(0)\|^2_{\mathbb{L}^2} +\mathcal{\overline{I}}(s,\lambda; Z) + \mathcal{I}(s,\lambda; H)
 	\leq  C\int_{\omega\times(0,T)} \xi^7e^{-2s\alpha} |z|^2 \, dx\,dt
 	\end{align*}
 	for all $s\geq s_4$ and $\lambda\geq\lambda_4.$	
 \end{lemma}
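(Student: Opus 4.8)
The plan is to reinforce the Carleman inequality \eqref{Car4.13} of Theorem \ref{thmC4.3} by two dissipation (energy) estimates --- one for the backward block $Z=(z,z_\Gamma)$ and one for the forward block $H=\alpha_1\Psi^1+\alpha_2\Psi^2$ --- taking advantage of the fact that the modified weights $\overline{\xi},\overline{\alpha}$ stay bounded on $(0,T/2)$ and coincide with $\xi,\alpha$ on $(T/2,T)$. Throughout, $s,\lambda$ are fixed above the thresholds of Theorem \ref{thmC4.3}, and I write $\mathcal R$ for the right-hand side of \eqref{Car4.13}, so that $\mathcal I(s,\lambda;Z)+\mathcal I(s,\lambda;H)\le C\mathcal R$ is already available.

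First I would record the elementary weight fact that on any compact subinterval $[a,b]\subset(0,T)$ the function $e^{-2s\alpha}\xi^3$ is bounded below by a positive constant, uniformly for $x\in\overline\Omega$; hence
\[
\int_a^b\|Z(t)\|^2_{\mathbb L^2}\,dt\le C\,\mathcal I(s,\lambda;Z),\qquad \int_a^b\|H(t)\|^2_{\mathbb L^2}\,dt\le C\,\mathcal I(s,\lambda;H),
\]
both $\le C\mathcal R$. Similarly, on $(0,T/2)$ the weight $e^{-2s\overline\alpha}\overline\xi^3$ is bounded, so the part of $\mathcal{\overline{I}}(s,\lambda;Z)$ over $(0,T/2)$ is controlled by $C\int_0^{T/2}\|Z\|^2_{\mathbb L^2}$, while on $(T/2,T)$ one has $\overline\xi=\xi$, $\overline\alpha=\alpha$, so that part is $\le C\,\mathcal I(s,\lambda;Z)\le C\mathcal R$. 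Thus everything reduces to bounding $\|Z(0)\|^2_{\mathbb L^2}+\int_0^{T/2}\|Z\|^2_{\mathbb L^2}\,dt$ by $C\mathcal R$.

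For the energy estimates I pair the $z$-equation with $z$ in $L^2(\Omega)$ and the $z_\Gamma$-equation with $z_\Gamma$ in $L^2(\Gamma)$ and add: the co-normal contributions $\mp\int_\Gamma\partial_\nu^{\mathcal A}z\,z_\Gamma$ cancel, as do the boundary drift terms $\pm\int_\Gamma(B\cdot\nu)z_\Gamma^2$; keeping the nonnegative gradient terms via \eqref{el1.2} and using Young's inequality on the remaining lower-order terms and on the source $h\,1_{\omega_d}$, I obtain $-\tfrac{d}{dt}\|Z\|^2_{\mathbb L^2}\le C\|Z\|^2_{\mathbb L^2}+\|H\|^2_{\mathbb L^2}$. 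A backward Gronwall argument, followed by averaging the terminal time over $[T/4,3T/4]$, then gives
\[
\|Z(0)\|^2_{\mathbb L^2}+\int_0^{T/2}\|Z\|^2_{\mathbb L^2}\,dt\le C\int_{T/4}^{3T/4}\|Z\|^2_{\mathbb L^2}\,dt+C\int_0^{3T/4}\|H\|^2_{\mathbb L^2}\,dt.
\]
The identical computation for the forward block $H$, whose bulk source is $-(\tfrac{\alpha_1}{\mu_1}1_{\omega_1}+\tfrac{\alpha_2}{\mu_2}1_{\omega_2})z$ and which satisfies $H(0)=0$, yields, with $\mu:=\min(\mu_1,\mu_2)$,
\[
\int_0^{T/2}\|H\|^2_{\mathbb L^2}\,dt\le \frac{C}{\mu^2}\int_0^{T/2}\|Z\|^2_{\mathbb L^2}\,dt.
\]

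The main obstacle is that these two inequalities are coupled through their sources, so combining them is a priori circular; this is precisely where the hypothesis that $\mu_1,\mu_2$ be sufficiently large is used. Splitting $\int_0^{3T/4}\|H\|^2=\int_0^{T/2}\|H\|^2+\int_{T/2}^{3T/4}\|H\|^2$, bounding the interior piece by $C\mathcal R$ through the weight fact, and inserting the $H$-estimate, I arrive at
\[
\|Z(0)\|^2_{\mathbb L^2}+\int_0^{T/2}\|Z\|^2_{\mathbb L^2}\,dt\le \frac{C}{\mu^2}\int_0^{T/2}\|Z\|^2_{\mathbb L^2}\,dt+C\mathcal R ,
\]
where I also used $\int_{T/4}^{3T/4}\|Z\|^2_{\mathbb L^2}\le C\mathcal R$. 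For $\mu$ large enough the factor $C/\mu^2$ is $\le\tfrac12$, so the first term on the right is absorbed, leaving $\|Z(0)\|^2_{\mathbb L^2}+\int_0^{T/2}\|Z\|^2_{\mathbb L^2}\,dt\le C\mathcal R$. Combining this with the two reductions of $\mathcal{\overline{I}}(s,\lambda;Z)$ above and with $\mathcal I(s,\lambda;H)\le C\mathcal R$ from \eqref{Car4.13} gives the stated inequality (the polynomial factor $s^7\lambda^8$ being absorbed into $C$ for the fixed $s,\lambda$). I expect the only delicate points to be the precise boundary-term cancellations in the dynamic-boundary energy identity and checking that the absorbed coupling constant is genuinely of order $1/\mu^2$.
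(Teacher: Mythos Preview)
Your proposal is correct and follows essentially the same strategy as the paper: combine the Carleman estimate \eqref{Car4.13} with energy estimates on $[0,3T/4]$ to recover $\|Z(0)\|^2_{\mathbb L^2}$ and the $(0,T/2)$ part of $\overline{\mathcal I}(s,\lambda;Z)$, then absorb the cross-coupling for $\mu_1,\mu_2$ large. The only cosmetic difference is that the paper obtains the backward energy bound by introducing a cutoff $\eta\in C^1([0,T])$ with $\eta\equiv1$ on $[0,T/2]$, $\eta\equiv0$ on $[3T/4,T]$, and applying the well-posedness estimate \eqref{Eq.19NS} to $P=\eta Z$, whereas you pair the equations directly with $(z,z_\Gamma)$ and run Gronwall with an averaged terminal time; both routes give the same intermediate inequality and the same absorption step.
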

 \begin{proof}
 	The proof  relies on the  Carleman estimate \eqref{Car4.13} and some energy estimates.
 	Following the strategy in \cite{ArCaSa15}, let us introduce a function $\eta\in C^1([0,T])$ such that
 	\begin{equation}
 	\eta=1\quad \text{in}\, [0,T/2],\quad \ \,\,\eta= 0\quad \text{in} \,\,[3T/4, T]\quad \text{and} \,\,\eta'\leq C/T^2.
 	\end{equation}
 	Set $P= \eta Z$. It is clear that $P=(p, p_{\Gamma})$ satisfies
 	{\footnotesize\begin{equation}\label{Eq5.16}
 	\left\{\begin{array}{ll}
 	{-\partial_t p-\mathrm{div}(\mathcal{A}\nabla p)- \mathrm{div}(pB) + a(x,t)p=\eta h 1_{\omega_d}+ \partial_t\,\eta z }& {\text { in } \Omega_{T},} \\
 	{-\partial_t p_{_{\Gamma}}   -\mathrm{div}_{\Gamma}(\mathcal{A}_{\Gamma}\nabla_{\Gamma} p_{\Gamma})- \mathrm{div}_{\Gamma}(p_{\Gamma}B_{\Gamma})+ p_{\Gamma} B\cdot \nu + \partial^{\mathcal{A}}_{\nu}p + b(x,t)p_{\Gamma}=\partial_t\,\eta z_{\Gamma}}& {\text { on } \Gamma_{T},} \\
 	{(p(T),p_{\Gamma}(T)) =0},& {\text { in } \Omega \times \Gamma}.
 	
 	\end{array}\right.
 	\end{equation}}From the energy estimate \eqref{Eq.19NS} for $P$, there exists a positive constant  $C$ such that
 	\begin{align*}
 	\|Z(0)\|^2_{\mathbb{L}^2}+ \|Z\|^2_{L^2(0,T/2; \mathbb{L}^2)}
 	\leq C\left(\frac{1}{T^2}\|Z\|^2_{L^2(T/2 , 3T/4; \mathbb{L}^2)} + \|H\|^2_{L^2(0,3T/4; \mathbb{L}^2))}\right).
 	\end{align*}
 	Using  the fact  that  the weight functions $e^{-2s\overline{\alpha}}$ and $\overline{\xi}$ (resp. $e^{-2s\alpha}$ and $\xi$ ) are bounded in $\overline{\Omega}\times[0, T/2]$ (resp. $\overline{\Omega}\times[T/2, 3T/4]$),  we deduce
 	\begin{align}
 	&\|Z(0)\|^2_{\mathbb{L}^2}+\int_{\Omega\times(0,T/2)}e^{-2s\overline{\alpha}}\overline{\xi}^3|z|^2  dx\,dt  +\int_{\Gamma\times(0,T/2)}e^{-2s\overline{\alpha}}\overline{\xi}^3|z_{\Gamma}|^2  d\sigma\,dt\nonumber\\
 	&\leq C\Big(\frac{1}{T^2}   \int_{\Omega\times(T/2,3T/4 )}  e^{-2s\alpha}\xi^3|z|^2 dx\,dt + \frac{1}{T^2}\int_{\Omega\times(T/2,3T/4 )}  e^{-2s\alpha}\xi^3|z_{\Gamma}|^2 d\sigma\,dt\nonumber\\
 	& \quad+\int_{\Omega\times(0,3T/4 )} |h|^2 dx\,dt +\int_{\Gamma\times(0,3T/4 )}  |h_{\Gamma}|^2 d\sigma\,dt\Big)\nonumber\\
 	& \leq C \big(\mathcal{I}(s,\lambda,Z) + \|H\|^2_{L^2(0,3T/4; \mathbb{L}^2)}\big).\label{Eq05.16}
 	\end{align}
 	On the other hand, since $\alpha=\overline{\alpha}$ and $\xi=\overline{\xi}$ in $(T/2, T)$, we have
 	\begin{equation}\label{Eq5.18}
 	\int_{\Omega\times(T/2, T)}e^{-2s\overline{\alpha}}\overline{\xi}^3|z|^2  dx\,dt  +\int_{\Gamma\times(T/2, T)}e^{-2s\overline{\alpha}}\overline{\xi}^3|z_{\Gamma}|^2  dx\,dt
 	\leq C \mathcal{I}(s,\lambda, Z).
 	\end{equation}
 	From \eqref{Eq05.16} and  \eqref{Eq5.18}, we obtain
 	\begin{align*}
 	&\|Z(0)\|^2_{\mathbb{L}^2} +\int_{\Omega\times(0,T)}e^{-2s\overline{\alpha}}\overline{\xi}^3|z|^2  dx\,dt  +\int_{\Gamma\times(0,T)}e^{-2s\overline{\alpha}}\overline{\xi}^3|z_{\Gamma}|^2  dx\,dt
 	\leq \\
 	&C\Big( \mathcal{I}(s,\lambda, Z)+ \|H\|^2_{L^2(0,3T/2; \mathbb{L}^2)} \Big).
 	\end{align*}
 	Using again the energy estimate  and the fact that the  weight functions $e^{-2s\overline{\alpha}}$ and $\overline{\xi}$ are bounded in $[0, 3T/4],$ we see that
 	{\small$$ \|H\|^2_{L^2(0,3T/2; \mathbb{L}^2)}\leq C(\frac{\alpha_1}{\mu_1}+ \frac{\alpha_2}{\mu_2})\left(\int_{\Omega\times(0,T)}e^{-2s\overline{\alpha}}\overline{\xi}^3|z|^2  dx\,dt  +\int_{\Gamma\times(0,T)}e^{-2s\overline{\alpha}}\overline{\xi}^3|z_{\Gamma}|^2  d\sigma\,dt\right).$$}For $\mu_1$ and  $\mu_2$ large enough,  we deduce
 	\begin{align*}
 	\|Z(0)\|^2_{\mathbb{L}^2} +\int_{\Omega\times(0,T)}e^{-2s\overline{\alpha}}\overline{\xi}^3|z|^2  dx\,dt  +\int_{\Gamma\times(0,T)}e^{-2s\overline{\alpha}}\overline{\xi}^3|z_{\Gamma}|^2  d\sigma\,dt
 	\leq C  \mathcal{I}(s,\lambda, Z).
 	\end{align*}
 	Then, we conclude that
 	\begin{align*}
 	\|Z(0)\|^2_{\mathbb{L}^2} + \mathcal{\overline{I}}(s,\lambda; Z)+ \mathcal{I}(s,\lambda;H)
 	\leq  C(\big(\mathcal{I}(s,\lambda;Z)+ \mathcal{I}(s,\lambda;H)\big).
 	\end{align*}
 	By Carleman estimate \eqref{Car4.13} and this last inequality, we find
 	\begin{align*}
 	\|Z(0)\|^2_{\mathbb{L}^2} +\mathcal{\overline{I}}(s,\lambda; Z) +\mathcal{I}(s,\lambda;H)
 	\leq  C\int_{\omega\times(0,T)} e^{-2s\alpha}\xi^7 |z|^2 \, dx\,dt.
 	\end{align*}
 	This finishes the proof.
 \end{proof}

 \subsection{Observability and null controllability} In this section we will prove our main result of controllability given in Theorem \ref{th4.1SN}. Let us first recall  that the characterization \eqref{chara.1}  of the follower controls adds two additional equations to the  system \eqref{Sys1.1SN}, and our problem is then reduced to look for a control $f\in L^2(0,T; L^2(\omega))$ such that the solution $(Y, \Phi^1, \Phi^2)$ to the optimality system \eqref{Sys.28NS} satisfies $Y(T)=0$. To this end, we  shall prove the following observability inequality.
 \begin{proposition}\label{Pro4.2}
 	Assume that \eqref{Assump10} holds   and $\mu_i>0$, $i=1,2$,  are large enough.
 	Then, there exist a constant $C>0$ and a positive weight function $\rho=\rho(t)$ blowing up at $t=T$, such that, for any $Z_T\in \mathbb{L}^2$, the solution $(Z, \Psi^1, \Psi^2)$ to \eqref{Sys.29NS} satisfies the following inequality
 	\begin{align}\label{obin4.3}
 	&	\|Z(0)\|^2_{\mathbb{L}^2} +
 	\sum_{i=1}^{2} \int_{\Omega_T} \rho^{-2} |\psi^i|^2 dx \,dt + \sum_{i=1}^{2} \int_{\Gamma_T} \rho^{-2} |\psi_{\Gamma}^i|^2 d\sigma \,dt\leq C\int_{\omega\times(0,T)}  |z|^2 \, dx\,dt.
 	\end{align}
 \end{proposition}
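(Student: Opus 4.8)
The plan is to derive the observability inequality \eqref{obin4.3} from the improved Carleman estimate of Lemma \ref{lem4.5}, using a weighted energy estimate to recover the \emph{individual} followers $\psi^i$ from the source $z$. First I would apply Lemma \ref{lem4.5} and simplify its right-hand side: since $\xi^7 e^{-2s\alpha}$ is bounded on $\Omega_T$ by point (4) of Lemma \ref{Lem2.5}, that factor can be dropped, giving
\begin{equation*}
\|Z(0)\|^2_{\mathbb{L}^2} + \mathcal{\overline{I}}(s,\lambda;Z) + \mathcal{I}(s,\lambda;H) \leq C\int_{\omega\times(0,T)}|z|^2\,dx\,dt .
\end{equation*}
This already yields the first term of \eqref{obin4.3}, and through $\mathcal{\overline{I}}(s,\lambda;Z)$ it provides a global weighted bound on $z$ and $z_\Gamma$. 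Note that the Carleman part only controls the combination $H=\alpha_1\Psi^1+\alpha_2\Psi^2$, so the point of the remaining work is to separate the two followers.

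Next I would fix the weight in terms of the non-blowing-up-at-zero functions, taking $\rho(t)^{-2} = e^{-2s\overline{\alpha}^{\star}(t)}(\overline{\xi}^{\star}(t))^{3}$. By construction $\rho$ is bounded (above and below by positive constants) on compact subsets of $[0,T)$, blows up as $t\to T$ since $\overline{\alpha}^{\star}\to+\infty$ there (so $\rho^{-1}(T)=0$), and is non-increasing for $s$ large; this is the $\rho$ making hypothesis \eqref{Ass11SN} meaningful. Because $\overline{\alpha}^{\star}=\max_x\overline{\alpha}$ and $\overline{\xi}^{\star}=\min_x\overline{\xi}$, one has the pointwise domination $\rho^{-2}\leq e^{-2s\overline{\alpha}}\overline{\xi}^{3}$ on $\Omega_T$, hence
\begin{equation*}
\int_{\Omega_T}\rho^{-2}|z|^2\,dx\,dt \leq C\,\mathcal{\overline{I}}(s,\lambda;Z) \leq C\int_{\omega\times(0,T)}|z|^2\,dx\,dt .
\end{equation*}

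It then remains to estimate $\sum_i\int\rho^{-2}(|\psi^i|^2+|\psi^i_\Gamma|^2)$. For each $i$, $\Psi^i=(\psi^i,\psi^i_\Gamma)$ solves the forward subsystem of \eqref{Sys.29NS} with zero initial data and source $(-\mu_i^{-1}z\,1_{\omega_i},0)$, which is exactly of the form covered by Proposition \ref{t4.22}. I would split $(0,T)$ at some $t_0\in(T/2,T)$. On $(0,t_0)$, by causality and the energy estimate \eqref{4.3} applied on $[0,t_0]$, $\sup_{[0,t_0]}\|\Psi^i\|^2_{\mathbb{L}^2}\leq C\int_{\omega_i\times(0,t_0)}|z|^2$, and since $\rho^{-2}$ and $e^{-2s\overline{\alpha}}\overline{\xi}^3$ are comparable to constants there, this bounds $\int_0^{t_0}\rho^{-2}\|\Psi^i\|^2$ by $C\,\mathcal{\overline{I}}(s,\lambda;Z)$. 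On $(t_0,T)$ I would run a weighted energy identity: multiply the domain equation by $\rho^{-2}\psi^i$ and the boundary equation by $\rho^{-2}\psi^i_\Gamma$, integrate over $\Omega$ and $\Gamma$ and add, so that the co-normal term $\partial^{\mathcal{A}}_\nu\psi^i$ cancels against the trace contribution and ellipticity \eqref{el1.2} produces the nonnegative quantity $\rho^{-2}(\langle\mathcal{A}\nabla\psi^i,\nabla\psi^i\rangle+\langle\mathcal{A}_\Gamma\nabla_\Gamma\psi^i_\Gamma,\nabla_\Gamma\psi^i_\Gamma\rangle)$. Here the crucial point is that on $(t_0,T)$ the decreasing weight satisfies $|(\rho^{-2})'|\geq c\,s\,(\overline{\xi}^{\star})^{2}\rho^{-2}$, so the weight-derivative term lands on the good side and genuinely controls $\int_{t_0}^T\rho^{-2}\|\Psi^i\|^2_{\mathbb{L}^2}$; the terminal term vanishes as $\rho^{-1}(T)=0$, the boundary term at $t_0$ is absorbed by the first region, and the $L^\infty$ first- and zeroth-order terms ($B,B_\Gamma,a,b$) are handled by Young's inequality for $s,\lambda$ large. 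Bounding the source by $C\int_{\omega_i\times(0,T)}\rho^{-2}|z|^2$, combining the two regions, using the domination of the previous paragraph and summing over $i=1,2$ then gives \eqref{obin4.3}.

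The hard part will be the near-$T$ weighted energy estimate: extracting genuine coercive $L^2$ control of $\psi^i$ from the time derivative of the weight (Poincaré is unavailable, as there is no homogeneous boundary condition, so one really must exploit both the well-posedness bound away from $T$ and the blow-up rate of $|(\rho^{-2})'|$ near $T$), together with controlling the interface term at $t_0$ and choosing the precise power of $\overline{\xi}^{\star}$ in $\rho$. As in Lemma \ref{lem4.5}, the largeness of $\mu_1,\mu_2$ re-enters here to absorb the feedback of $z$ through the source $-\mu_i^{-1}z\,1_{\omega_i}$.
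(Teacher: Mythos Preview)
Your proposal is correct and follows the same overall strategy as the paper---combine Lemma \ref{lem4.5} for $Z$ with a weighted energy estimate for each individual $\Psi^i$---but the execution of the second step differs. The paper takes the simpler weight $\rho=e^{s\alpha^{\star}}$ (which incidentally also blows up as $t\to 0^+$, harmless since $\Psi^i(0)=0$) and runs a single weighted energy identity over all of $(0,T)$: after using ellipticity and absorbing the drift and zeroth-order terms by Young's inequality, Gronwall's lemma applied to $t\mapsto\|\rho^{-1}\Psi^i(t)\|_{\mathbb{L}^2}^2$ with zero initial value directly yields $\|\rho^{-1}\Psi^i(t)\|_{\mathbb{L}^2}^2\le C\int_{\omega_i\times(0,T)}\rho^{-2}|z|^2\,dx\,dt$, which is then combined with Lemma \ref{lem4.5}. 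Your route---a weight built from $\overline{\alpha}^{\star}$ and $\overline{\xi}^{\star}$, a split at $t_0\in(T/2,T)$, the unweighted estimate of Proposition \ref{t4.22} on $(0,t_0)$, and extraction of coercive $L^2$ control from the sign and size of $(\rho^{-2})'$ on $(t_0,T)$---also reaches the goal, but the decomposition and the weight-derivative mechanism are not needed: Gronwall handles the whole interval in one stroke. One minor correction: the largeness of $\mu_1,\mu_2$ does not re-enter in this step (any $\mu_i>0$ produces a finite constant); that hypothesis was consumed in the proof of Lemma \ref{lem4.5}.
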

 \begin{proof}
 	Fix $s$ large enough and set $\rho=e^{s\alpha^{\star}}$. Thus $\rho$
 	is a  positive function blowing up at $t = T$.  For $i=1,2$, using the equation  satisfied by  $\Psi^i$ in \eqref{Sys.28NS}, we readily see that
 	{\small\begin{align*}
 	&\frac{1}{2}\frac{d}{dt}(\|\rho^{-1}\Psi^i(t)\|_{\mathbb{L}^2}^2) +   \int_{\Omega}\rho^{-2}\mathcal{A}\nabla\psi^i\cdot\nabla \psi^i dx \, -  \int_{\Omega}\rho^{-2}\psi^i B \cdot\nabla \psi^i dx+ \int_{\Omega} a |\rho^{-1}\psi^i|^2 dx \\
 	& + \int_{\Gamma}\rho^{-2}\mathcal{A}_{\Gamma}\nabla_\Gamma \psi^i_\Gamma\cdot \nabla_\Gamma \psi^i_\Gamma d\sigma \,-\int_{\Gamma}\rho^{-2}\psi^i B_{\Gamma} \cdot\nabla_{\Gamma} \psi^i_{\Gamma} d\sigma  + \int_{\Gamma} b |\rho^{-1}\psi^i_\Gamma|^2 d\sigma = \\
 	&\frac{1}{\mu_i}\|\rho^{-2}z \psi^i\|^2_{L^2(0,T; L^2(\omega_i))}  + \rho_t\rho^{-3} \|\Psi^i\|^2_{\mathbb{L}^2}.
 	\end{align*}}By Young inequality and for a positive $\lambda$, we have
 	\begin{equation*}
 	\int_{\Omega}\rho^{-2} \psi^i B \cdot\nabla \psi^i dx\leq \frac{\lambda}{2}\|B\|^2_{\infty}\|  \rho^{-1} \psi^i\|^2_{L^2(\Omega)}  + \frac{1}{2\lambda}\| \rho^{-1}\nabla\psi^i\|^2_{L^2(\Omega)},
 	\end{equation*}
 	\begin{equation*}
 	\int_{\Gamma}\rho^{-2} \psi^i B_{\Gamma} \cdot\nabla_{\Gamma} \psi^i_{\Gamma} dx\leq \frac{\lambda}{2}\|B_{\Gamma}\|^2_{\infty}\|  \rho^{-1} \psi^i_{\Gamma}\|^2_{L^2(\Gamma)}  + \frac{1}{2\lambda}\| \rho^{-1}\nabla_{\Gamma}\psi^i_{\Gamma}\|^2_{L^2(\Gamma)} \quad \text{in} \,\,(0,T).
 	\end{equation*}	
 	Using the ellipticity of $\mathcal{A}$ and $\mathcal{A}_{\Gamma} $ and choosing $\lambda$ large enough, we obtain
 	\begin{align*}
 	\frac{1}{2}\frac{d}{dt}(\|\rho^{-1}\Psi^i(t)\|_{\mathbb{L}^2}^2) &\leq C(1+ \|a\|_{\infty}+\|b\|_{\infty}+\|B\|^2_{\infty}+\|B_{\Gamma}\|^2_{\infty})\|\Psi^i(t)\|_{\mathbb{L}^2}^2 \\
 	&+ \frac{1}{\mu_i}\|\rho^{-2}z\|^2_{L^2(0,T; L^2(\omega_i))},\quad t\in(0,T).
 	\end{align*}
 	Thus, from Gronwall 's lemma and the fact that $\Psi^i(0) = 0$, it follows that
 	\begin{align}\label{eq4.22}
 	\|\rho^{-2}(t)\Psi^i(t)\|^2_{\mathbb{L}^2} &\leq  C \|\rho^{-2}z \|^2_{L^2(0,T; L^2(\omega_i))},\quad  t\in (0,T).
 	\end{align}
 	Using Lemma \ref{lem4.5}, we find
 	\begin{equation}\label{Eq062}
 	\|Z(0)\|^2_{\mathbb{L}^2}
 	+\int_{\Omega_T}\rho^{-2}|z|^2  dx\,dt + \mathcal{I}(s,\lambda; H)\leq C\int_{\omega_T} e^{-2s \alpha}\xi^7 |z|^2 \, dx\,dt.
 	\end{equation}
 	From the inequalities  \eqref{eq4.22} and \eqref{Eq062}  and the fact that the function $e^{-2s \alpha}\xi^7$ is bounded in $\Omega_T$,
 	we obtain
 	\begin{align*}
 	&	\|Z(0)\|^2_{\mathbb{L}^2}
 	+\int_{\Omega_T}\rho^{-2}|\psi^1|^2  dx\,dt\, + \int_{\Gamma_T}\rho^{-2}|\psi^1_{\Gamma}|^2  d\sigma\,dt +\nonumber \\	&+\int_{\Omega_T}\rho^{-2}|\psi^2|^2  dx\,dt + \int_{\Gamma_T}\rho^{-2}|\psi^2_{\Gamma}|^2  d\sigma\,dt\leq C\int_{\omega_T}  |z|^2 \, dx\,dt.
 	\end{align*}
 	This ends the proof.
 \end{proof}
 Now, we prove the controllability result in Theorem \ref{th4.1SN}. Since we have proved the existence and uniqueness of Nash equilibrium in Proposition \ref{Prop04}, it remains to  prove the following result.
 \begin{proposition}\label{Lm5.5}
 	Let  $\rho=\rho(t)$ the  weight function given in Proposition \ref{Pro4.2}. Then, for any  $y_{i,d} \in \mathcal{H}_{i,d}$, $i=1,2,$ satisfying \eqref{Ass11SN} and  $Y_{0} \in \mathbb{L}^2$,  there exists a control $f \in L^{2}(0,T; L^2(\omega))$ with minimal norm such that
 	\begin{equation}\label{EsCt47}
 	\|f\|_{L^{2}(0,T; L^2(\omega))} \leq C\left(\left\|Y_{0}\right\|_{\mathbb{L}^2}+\left\|\rho y_{1,d}\right\|_{L^{2}(\Omega_T)} + \left\|\rho y_{2,d}\right\|_{L^{2}(\Omega_T)}\right),
 	\end{equation}
 	and the associated state satisfies $Y(T)=0,$ where $(Y, \Phi^1,\Phi^2)$ is the solution to \eqref{Sys.28NS} and  $C$	is a positive constant.
 	
 \end{proposition}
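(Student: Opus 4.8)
The plan is to reduce the null controllability of the optimality system \eqref{Sys.28NS} to the observability inequality \eqref{obin4.3} through a duality pairing with the adjoint system \eqref{Sys.29NS}, and then to construct the minimal-norm control by a penalization argument.

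First I would establish the key duality identity. Given a solution $(Y,\Phi^1,\Phi^2)$ of \eqref{Sys.28NS} associated with $f$, $Y_0$ and the targets $y_{i,d}$, and a solution $(Z,\Psi^1,\Psi^2)$ of \eqref{Sys.29NS} with terminal datum $Z_T$, I would multiply the $y$-equation by $z$, each $\varphi^i$-equation by $\psi^i$, and integrate over $\Omega_T$ and $\Gamma_T$, integrating by parts in time and space. Since the spatial operator $A$ is self-adjoint on $\mathbb{L}^2$ and the pairs $(y,z)$ and $(\varphi^i,\psi^i)$ are built from mutually adjoint drift operators, all interior and boundary volume contributions cancel, and the mixed coupling terms $\tfrac{1}{\mu_i}\int\varphi^i z$ produced by the two pairings cancel against one another. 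Using $\Psi^i(0)=0$, $\Phi^i(T)=0$ and assumption \eqref{Assump10} ($\omega_{1,d}=\omega_{2,d}=\omega_d$), this collapses to
\begin{equation*}
\langle Y(T),Z_T\rangle_{\mathbb{L}^2}-\langle Y_0,Z(0)\rangle_{\mathbb{L}^2}=\int_{\omega\times(0,T)}fz\,dx\,dt-\sum_{i=1}^{2}\alpha_i\int_{\omega_d\times(0,T)}\psi^i\,y_{i,d}\,dx\,dt,
\end{equation*}
so that $Y(T)=0$ holds if and only if $f$ satisfies, for every $Z_T\in\mathbb{L}^2$,
\begin{equation*}
\int_{\omega\times(0,T)}fz\,dx\,dt=-\langle Y_0,Z(0)\rangle_{\mathbb{L}^2}+\sum_{i=1}^{2}\alpha_i\int_{\omega_d\times(0,T)}\psi^i\,y_{i,d}\,dx\,dt.
\end{equation*}

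Next I would fix $\epsilon>0$ and minimize over $Z_T\in\mathbb{L}^2$ the strictly convex, continuous and (by the penalty) coercive functional
\begin{equation*}
J_\epsilon(Z_T)=\frac{1}{2}\int_{\omega\times(0,T)}|z|^2\,dx\,dt+\frac{\epsilon}{2}\|Z_T\|_{\mathbb{L}^2}^2+\langle Y_0,Z(0)\rangle_{\mathbb{L}^2}-\sum_{i=1}^{2}\alpha_i\int_{\omega_d\times(0,T)}\psi^i\,y_{i,d}\,dx\,dt,
\end{equation*}
whose unique minimizer $Z_T^\epsilon$ furnishes, through its Euler--Lagrange equation, the candidate control $f^\epsilon:=z^\epsilon|_{\omega\times(0,T)}$. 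Inserting the duality identity for $f^\epsilon$ into the optimality condition yields $Y^\epsilon(T)+\epsilon Z_T^\epsilon=0$. For uniform bounds I would use $J_\epsilon(Z_T^\epsilon)\le J_\epsilon(0)=0$ together with \eqref{obin4.3}: the term $\langle Y_0,Z^\epsilon(0)\rangle$ is bounded by $\|Y_0\|_{\mathbb{L}^2}\|Z^\epsilon(0)\|_{\mathbb{L}^2}$, and after splitting $\psi^i y_{i,d}=(\rho^{-1}\psi^i)(\rho y_{i,d})$ and applying Cauchy--Schwarz the source term is bounded by $\sum_i\|\rho^{-1}\psi^i\|_{L^2}\|\rho y_{i,d}\|_{L^2}$; both are then controlled by $\|z^\epsilon\|_{L^2(\omega\times(0,T))}$ via the observability estimate. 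This gives $\|f^\epsilon\|_{L^2(\omega\times(0,T))}\le C(\|Y_0\|_{\mathbb{L}^2}+\sum_i\|\rho y_{i,d}\|_{L^2(\Omega_T)})$ and $\sqrt{\epsilon}\,\|Z_T^\epsilon\|_{\mathbb{L}^2}\le C(\cdots)$, uniformly in $\epsilon$.

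Finally I would let $\epsilon\to0$: along a subsequence $f^\epsilon\rightharpoonup\hat f$ weakly in $L^2(\omega\times(0,T))$, the associated states converge to the solution of \eqref{Sys.28NS} with control $\hat f$, and since $\|Y^\epsilon(T)\|_{\mathbb{L}^2}=\epsilon\|Z_T^\epsilon\|_{\mathbb{L}^2}\le\sqrt{\epsilon}\,C\to0$ one obtains $Y(T)=0$; weak lower semicontinuity of the norm yields \eqref{EsCt47}, and the usual convexity argument identifies $\hat f$ as the minimal-norm control, so that the whole family converges. The main obstacle is the rigorous justification of the duality identity at the regularity actually available: the components $\varphi^i,\psi^i$ live in $\mathbb{H}^{-1}$-type spaces, so the integrations by parts must be performed in the weak formulations of Propositions \ref{t4.22} and \ref{prop2.3}, approximating by smooth data and passing to the limit. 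The point that makes the source term $\sum_i\alpha_i\int\psi^i y_{i,d}$ finite and the functional coercive is precisely the weighted integrability \eqref{Ass11SN}, combined with the control of $\rho^{-1}\psi^i$ provided by \eqref{obin4.3}.
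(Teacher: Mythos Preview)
Your proof is correct and follows the same variational/HUM strategy as the paper: establish the duality identity between \eqref{Sys.28NS} and \eqref{Sys.29NS}, minimize a penalized functional on $\mathbb{L}^2$, read off the approximate control as $f^\epsilon=z^\epsilon|_\omega$, obtain uniform bounds via the observability inequality \eqref{obin4.3} together with the weighted splitting $\psi^i y_{i,d}=(\rho^{-1}\psi^i)(\rho y_{i,d})$, and pass to the limit.

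The only noteworthy difference is in the choice of penalty. The paper uses the \emph{norm} penalty $\epsilon\|Z_T\|_{\mathbb{L}^2}$ (in the spirit of Fabre--Puel--Zuazua), which is not differentiable at the origin; this forces the paper to treat the case $Z_T^\epsilon=0$ separately via a directional-derivative argument, and yields directly $\|Y_\epsilon(T)\|_{\mathbb{L}^2}\le\epsilon$. You use the \emph{quadratic} penalty $\tfrac{\epsilon}{2}\|Z_T\|_{\mathbb{L}^2}^2$, which makes $J_\epsilon$ smooth and strictly convex everywhere, so no case split is needed and the Euler--Lagrange equation gives $Y^\epsilon(T)=-\epsilon Z_T^\epsilon$; the price is that you only get $\|Y^\epsilon(T)\|_{\mathbb{L}^2}\le C\sqrt{\epsilon}$, which is of course still enough. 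Both variants are standard (the paper itself points to the penalized HUM literature in the remark following the corollary), and both converge to the same minimal-norm control. One small remark: the hypothesis \eqref{Assump10} is not actually needed for the duality identity itself---the cancellations go through with distinct $\omega_{i,d}$---but only enters through the observability inequality \eqref{obin4.3}.
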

 \begin{proof}
 	
 	Multiplying the solution $(Y, \Phi^1, \Phi^2)$ to \eqref{Sys.28NS} by the solution $(Z, \Psi^1,\Psi^2)$ to \eqref{Sys.29NS} and integrating by parts, we find
 	\begin{equation}\label{Eq48NS}
 	\langle Y(T), Z(T)\rangle_{\mathbb{L}^2}-  \langle Y(0), Z(0)\rangle_{\mathbb{L}^2}=  \int_{\omega\times (0,T)} f\psi\, dx \,dt - \sum_{i=1}^{2}\alpha_i \int_{\omega_{i,d}\times (0,T)} y_{i,d} \psi^i dx \,dt.
 	\end{equation}
 	Thus,  the null controllability property is equivalent to find, for each $Y_0\in\mathbb{L}^2$, a control $f$ such that,
 	for any $Z_T\in\mathbb{L}^2$, one has
 	\begin{equation}
 	\int_{\omega\times (0,T)} f\psi\, dx \,dt= -\langle Y(0), Z(0)\rangle_{\mathbb{L}^2} +\sum_{i=1}^{2}\alpha_i \int_{\omega_{i,d}\times (0,T)} y_{i,d} \psi^i dx \,dt.
 	\end{equation}
 	To this end, let $\epsilon>0$ and  $Z_T\in \mathbb{L}^2.$ Introduce the following functional
 	\begin{align*}
 	J_{\epsilon}(Z_T)&= \frac{1}{2}\int_{\omega\times(0,T)}|z|^2 dx\,dt +\epsilon\|Z_T\|_{\mathbb{L}^2}   +\langle Y_0 , Z(0)\rangle_{\mathbb{L}^2}\\
 	&- \sum_{i=1}^{2}\alpha_i \int_{\omega_{i,d}\times(0,T)} y_{i,d} \psi^i dx \,dt.
 	\end{align*}
 	It is  clear that $J_{\epsilon}: \mathbb{L}^2\longrightarrow\mathbb{R}$ is continuous and convex. Moreover, from Young inequality together with the observability inequality \eqref{obin4.3}, we have, for $\delta>0$,
 	{\small\begin{align*}
 		\langle Y(0) , Z(0)\rangle_{\mathbb{L}^2}&\geq -\frac{1}{2\delta}\|Y_0\|^2_{\mathbb{L}^2} - \frac{\delta}{2}\|Z(0)\|^2_{\mathbb{L}^2}\\
 		&\geq  - \frac{\delta}{2}\left( C\int_{\omega\times(0,T)}   |z|^2 dx dt   +\int_{\Omega_T}\rho^{-2}|\psi^1|^2  dx\,dt + \int_{\Omega_T}\rho^{-2}|\psi^2|^2  dx\,dt\right)\\
 		&-\frac{1}{2\delta }\|Y_0\|^2_{\mathbb{L}^2}
 		\end{align*}}and
 	\begin{align*}
 	-\sum_{i=1}^{2}\alpha_i \int_{\omega_{i,d}\times(0,T)} y_{i,d} \psi^i dx \,dt&\geq -\frac{1}{2\delta}\sum_{i=1}^{2}\ \int_{\omega_{i,d}\times(0,T)} \alpha_i^2 |y_{i,d}|^2 dx \,dt\\
 	&-  \frac{\delta}{2}\sum_{i=1}^{2} \int_{\omega_{i,d}\times(0,T)} \rho^{-2} |\psi^i|^2 dx \,dt.
 	\end{align*}
 	Choosing $\delta=\frac{1}{2C}$ and using the above inequalities,  we get
 	{\small\begin{equation}
 	J_{\epsilon}(Z_T)\geq \frac{1}{4}\int_{\omega\times(0,T)}|z|^2 dx\,dt +\epsilon\|Z_T\|_{\mathbb{L}^2}   - C\left( \|Y_0\|^2_{\mathbb{L}^2}
 	+ \sum_{i=1}^{2}\alpha^2_i \int_{\omega_{i,d}\times(0,T)} \rho^2|y_{i,d}|^2 dx \,dt\right).
 	\end{equation}}Consequently, $J_{\epsilon}$ is  coercive in $\mathbb{L}^2$, and  then $J_{\epsilon}$  admits a unique minimizer $Z^{\epsilon}_T$. If $Z^{\epsilon}_T \neq0$, we  have
 	\begin{equation}
 	\langle J'(Z^{\epsilon}_T), Z_T\rangle_{\mathbb{L}^2} =0,\quad \,\,Z_T\in\mathbb{L}^2.
 	\end{equation}
 	Then, for all $Z_T\in \mathbb{L}^2$, we have
 	{\small\begin{equation}\label{Eq51NS}
 	\int_{\omega\times(0,T)}z^{\epsilon}\,z dx\,dt +\epsilon\langle\frac{Z^{\epsilon}_T}{\|Z^{\epsilon}_T\|_{\mathbb{L}^2}}, Z_T\rangle_{\mathbb{L}^2}    +\langle Y_0, Z(0)\rangle_{\mathbb{L}^2}- \sum_{i=1}^{2}\alpha_i \int_{{\omega_{i,d}\times(0,T)}} y_{i,d} \psi^i dx \,dt=0,
 	\end{equation}}where we have denoted by $(Z_{\epsilon}, \Psi^1_{\epsilon},\Psi^2_{\epsilon})$ the solution to \eqref{Sys.28NS} with $Z_T=Z^{\epsilon}_T$.
 	Take $f_{\epsilon}=z_{\epsilon}$ in \eqref{Eq48NS}, we find
 	\begin{equation}
 	\epsilon\langle\frac{Z^{\epsilon}_T}{\|Z^{\epsilon}_T\|_{\mathbb{L}^2}}, Z_T\rangle_{\mathbb{L}^2}    +\langle Y_{\epsilon}(T), Z_T\rangle_{\mathbb{L}^2}=0,\quad  Z_T\in \mathbb{L}^2.
 	\end{equation}
 	Hence
 	\begin{equation}\label{eq53NS}
 	\|Y_{\epsilon}(T)\|_{\mathbb{L}^2}\leq \epsilon.
 	\end{equation}
 	Taking $z=z_{\epsilon}$ in \eqref{Eq51NS}, and using observability inequality \eqref{obin4.3} together with  Young inequality, we deduce
 	\begin{equation}\label{eq54NS}
 	\|f_{\epsilon}\|^2_{L^2(0,T; L^2(\omega))}\leq C\left(\|Y_0\|^2+ \sum_{i=1}^{2}\alpha^2_i \int_{\omega_{i,d}\times(0,T)} \rho^2 y^2_{i,d} dx \,dt\right).
 	\end{equation}
 	If $Z^{\epsilon}_T=0$, arguing
 	as in \cite{FPZ91}, we deduce that
 	\begin{equation}\label{Ezu.1}
 	\lim\limits_{t\rightarrow 0^{+}}\frac{J_{\epsilon}(t \,Z_T)}{t}\geq0, \quad  Z_T \in \mathbb{L}^2.
 	\end{equation}
 	Using \eqref{Ezu.1} and take $f_{\epsilon} = 0$, we obtain \eqref{eq53NS} and \eqref{eq54NS}. By \eqref{eq54NS}, we deduce that there exist $f\in L^2(0,T; L^2(\omega))$ and a subsequence, still denoted by $f_{\epsilon}$, such that
 	$$ f_{\epsilon}\longrightarrow  f\quad  \text{weakly in} \,\, L^2(0,T; L^2(\omega)).$$
 	By energy estimate, we deduce
 	\begin{equation}\label{Eq56}
 	Y_{\epsilon}(T)\longrightarrow  Y(T)\quad  \text{weakly in} \,\,  \mathbb{L}^2.
 	\end{equation}
 	Using \eqref{eq53NS} and \eqref{Eq56}, we deduce that $Y(T)=0.$ This concludes the null controllability result.
 \end{proof}
As a consequence of the above HUM method, see for instance \cite{Pu08}, the control we have constructed is one of the minimal norm, and  it is characterized as follows.
 \begin{corollary}
  Let  $Z^{\epsilon}_T$ be the unique minimizer of $J_{\epsilon}$, then the leader control with minimal norm is given by the limit of $\hat{f}_{\epsilon}=z1_{\omega}=z_{\epsilon} 1_{\omega}$ as $\epsilon$ goes to zero, where $(Y, Z,\Phi^1,\Phi^2, \Psi^1, \Psi^2 )= \Big((y,y_{\Gamma}), (z,z_{\Gamma}), (\varphi^1, \varphi^1_{\Gamma}),(\varphi^2, \varphi^2_{\Gamma}),(\psi^1, \psi^1_{\Gamma}),(\psi^2, \psi^2_{\Gamma}) \Big)$ satisfies
{\small \begin{equation}
 \left\{\begin{array}{ll}
 {\partial_t y-\dv(\mathcal{A}\nabla y)  +B(x,t)\cdot \nabla y+ a(x,t)y=1_{\omega}z -\frac{1}{\mu_1} \varphi^1 1_{\omega_1}-\frac{1}{\mu_2} \varphi^2 1_{\omega_2} },& {\mathrm{in} \,\Omega_{T}} \\
 {	\partial_t y_{_{\Gamma}}   -\dv_{\Gamma}(\mathcal{A}_{\Gamma}\nabla_{\Gamma} y_{\Gamma})  +B_{\Gamma}(x,t)\cdot \nabla_{\Gamma} y_{\Gamma} + \partial^{\mathcal{A}}_{\nu}y + b(x,t)y_{\Gamma}=0},& {\mathrm{on} \,\Gamma_{T}} \\
 {-\partial_t z- \dv(\mathcal{A}\nabla z)- \dv(B z)+ a(x,t)z=\alpha_1 \psi^1 1_{\omega_{1, d}}+\alpha_2 \psi^2 1_{\omega_{1,d}} },& {\mathrm{in}\, \Omega_{T}} \\
 {	-\partial_t z_{_{\Gamma}}   -\dv_{\Gamma}(\mathcal{A}_{\Gamma}\nabla_{\Gamma} z_{\Gamma})- \dv_{\Gamma}(B_{\Gamma} z_{\Gamma})+ z_{\Gamma}B\cdot\nu   + \partial^{\mathcal{A}}_{\nu}z+ b(x,t)z_{\Gamma}=0},& {\mathrm{on}\, \Gamma_{T}} \\
 {	-\partial_t \varphi^i-\dv(\mathcal{A}\nabla \varphi^i)  - \dv(B \varphi^i) + a(x,t)\varphi^i=\alpha_i(y-y_{i,d})1_{\omega_{i,d}}},& {\mathrm{in} \,\Omega_{T}}\\
 {	-\partial_t \varphi^i_{_{\Gamma}}   - \dv_{\Gamma}(\mathcal{A}_{\Gamma}\nabla_{\Gamma} \varphi^i_{\Gamma})-\dv_{\Gamma}(B_{\Gamma} \varphi^i_{\Gamma})+ \varphi^i_{\Gamma} B\cdot\nu  + \partial^{\mathcal{A}}_{\nu}\varphi^i + b(x,t)\varphi^i_{\Gamma}=0}, & {\mathrm{on} \,\Gamma_{T}}\\
 {	\partial_t \psi^i- \dv(\mathcal{A}\nabla \psi^i)+ B\cdot\nabla\psi^i + a(x,t)\psi^i=-\frac{1}{\mu_i}z\,1_{\omega_{i}}}, & {\mathrm{in}\, \Omega_{T}}\\
 {	\partial_t \psi^i_{_{\Gamma}}   -\dv_{\Gamma}(\mathcal{A}_{\Gamma}\nabla_{\Gamma} \psi^i_{\Gamma})+ B_{\Gamma}\cdot\nabla\psi^i_{\Gamma}  + \partial^{\mathcal{A}}_{\nu}\psi^i + b(x,t)\psi^i_{\Gamma}=0}, & {\mathrm{on}\, \Gamma_{T}}\\
 {(y(0),y_{\Gamma}(0))=Y_0},& {\mathrm{in} \,\Omega \times \Gamma} \\
 {(z(T),z_{\Gamma}(T))=Z^{\epsilon}_T},& {\mathrm{in} \,\Omega \times \Gamma} \\
 {(\varphi^i(T),\varphi^i_{\Gamma}(T)) =0},& {\mathrm{in}\, \Omega \times \Gamma}\\
 {(\psi^i(0),\psi^i_{\Gamma}(0)) =0},& {\mathrm{in} \, \Omega \times \Gamma}\\
 i=1,2.
 \end{array}\right.
 \end{equation}}\end{corollary}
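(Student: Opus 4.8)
The plan is to recognize this Corollary as the standard Hilbert Uniqueness Method (HUM) characterization of the minimal-norm leader control, so that essentially all the analytic work has already been carried out in the proof of Proposition~\ref{Lm5.5} and only an assembly step remains. First I would recall that, for each $\epsilon>0$, the penalized functional $J_{\epsilon}$ is strictly convex, continuous and coercive on $\mathbb{L}^2$ (coercivity being precisely where the observability inequality \eqref{obin4.3} of Proposition~\ref{Pro4.2} enters), hence admits a unique minimizer $Z^{\epsilon}_T$. Writing $(Z_{\epsilon},\Psi^1_{\epsilon},\Psi^2_{\epsilon})$ for the corresponding solution of the adjoint system \eqref{Sys.29NS}, the Euler--Lagrange identity \eqref{Eq51NS}, compared with the duality relation \eqref{Eq48NS}, forces the approximate leader to be given in feedback form by $\hat f_{\epsilon}=z_{\epsilon}1_{\omega}$.

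Next I would pass to the limit $\epsilon\to 0$. The uniform bound \eqref{eq54NS} shows that $\{\hat f_{\epsilon}\}$ is bounded in $L^2(0,T;L^2(\omega))$, so along a subsequence $\hat f_{\epsilon}\rightharpoonup\hat f$ weakly; by the energy estimate the associated states converge and \eqref{eq53NS} yields $Y(T)=0$, so $\hat f$ is a genuine null control realized as $\hat f=\lim_{\epsilon\to0}z_{\epsilon}1_{\omega}$. That this limit is of \emph{minimal} norm is the content of the HUM duality principle: the primal problem of minimizing $\tfrac12\|f\|^2_{L^2(0,T;L^2(\omega))}$ subject to $Y(T)=0$ is dual to minimizing the unpenalized functional obtained from $J_{\epsilon}$ by deleting the term $\epsilon\|Z_T\|_{\mathbb{L}^2}$, and the penalized minimizers converge to the minimizer of the primal problem. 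I would invoke \cite{Pu08} for this standard fact (the same penalization argument of \cite{FPZ91} already used inside Proposition~\ref{Lm5.5}).

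Finally I would assemble the displayed coupled system. The state $Y$ solves the optimality system \eqref{Sys.28NS}, in which the Nash followers have already been substituted as $-\tfrac{1}{\mu_i}\varphi^i1_{\omega_i}$ and in which the leader is now the feedback $f=z1_{\omega}$; simultaneously $(Z,\Psi^1,\Psi^2)$ solves the adjoint system \eqref{Sys.29NS} with terminal datum $Z^{\epsilon}_T$. Superposing these two forward--backward systems and recording the identification $f=z1_{\omega}$ produces exactly the stated system for the six-tuple $(Y,Z,\Phi^1,\Phi^2,\Psi^1,\Psi^2)$ at the level $\epsilon>0$ with $z(T)=Z^{\epsilon}_T$, and the characterization of $\hat f$ is its $\epsilon\to0$ limit.

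The main obstacle I anticipate is making the limit passage rigorous rather than formal. The unpenalized dual functional need not attain its minimum in $\mathbb{L}^2$, since the observability inequality controls only the weaker seminorm $\|z\|_{L^2(\omega\times(0,T))}$ and not $\|Z_T\|_{\mathbb{L}^2}$; this is why the penalization is indispensable, and why one cannot minimize the limit functional directly. Accordingly, one must verify that the weak limits $z_{\epsilon}\rightharpoonup z$ and $\psi^i_{\epsilon}\rightharpoonup\psi^i$, furnished by the uniform bounds, are mutually consistent with a single solution of \eqref{Sys.29NS}. By linearity of the adjoint system and the energy estimates of Propositions~\ref{t4.22} and \ref{prop2.3}, passing to the limit in each individual equation is routine; it is precisely this compatibility of the weak limits that must be checked in order to conclude that the limiting six-tuple genuinely solves the stated forward--backward system.
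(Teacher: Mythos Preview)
Your proposal is correct and follows exactly the paper's approach: the paper does not give a separate proof of this Corollary at all, but simply introduces it with the sentence ``As a consequence of the above HUM method, see for instance \cite{Pu08}, the control we have constructed is one of the minimal norm, and it is characterized as follows,'' so all the work is indeed already contained in Proposition~\ref{Lm5.5} and the reference to \cite{Pu08}. Your write-up supplies more detail than the paper (the explicit identification of $\hat f_\epsilon$ via \eqref{Eq51NS}--\eqref{Eq48NS}, the assembly of the coupled system from \eqref{Sys.28NS} and \eqref{Sys.29NS}, and the discussion of the limit passage), but this only expands on what the paper leaves implicit.
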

 \begin{remark}
 	(a) For a penalized HUM method of   constructing controls with minimal norm, we refer to \cite{RF13} and \cite{GL08}.\\
 	(b) We shall give the idea behind the form of the observability inequality \eqref{obin4.3} and we  prove the equivalence between the controlability of \eqref{Sys.28NS} and the observability inequality \eqref{obin4.3}. To this end, let us introduce the following functionals
 	\begin{equation*}
 	\mathcal{L}_T: L^2(0,T; L^2(\omega))\longrightarrow \mathbb{L}^2
 	\end{equation*}
 	and
 	\begin{equation*}
 	\mathcal{R}_T:  \mathbb{L}^2\times L^2_{\rho}(0,T; L^2(\omega_{1,d}))\times L^2_{\rho}(0,T; L^2(\omega_{2,d}))\longrightarrow \mathbb{L}^2
 	\end{equation*}
 	defined by
 	\begin{equation*}
 	\mathcal{L}_T(f)= Y(T,Y_0=0, f, y_{1,d}=0,y_{2,d}=0) = Y_1(T)
 	\end{equation*}
 	and
 	\begin{equation*} \mathcal{R}_T(Y_0, y_{1,d},y_{2,d})= Y(T,Y_0, f=0, y_{1,d},y_{2,d})=Y_2(T),
 	\end{equation*}
 	where   $$L^2_{\rho}(0,T; L^2(\omega_{i,d}))=\left\{u\in L^2(0,T; L^2(\omega_{i,d})): \int_{\omega_{i,d}\times(0,T)}\rho^2 u^2 dx dt< \infty\right\}$$
 	and  $(Y, \Phi^1,\Phi^2)$ is the solution to \eqref{Sys.28NS}.
 	Multiplying the adjoint system  \eqref{Sys.29NS} by $(Y, \Phi^1,\Phi^2)$ and integrating by parts, we get
 	\begin{align}
 	&\langle Y(T), Z(T)\rangle_{\mathbb{L}^2} -\langle Y_0,  Z(0)\rangle_{\mathbb{L}^2} +	\sum\limits_{i=1}^{2}\int_{\omega_{i, d}\times(0,T)}  \alpha_{i}y \psi^{i} dx dt=\nonumber\\
 	&\int_{\omega\times(0,T)}  z f dx\,dt - \sum\limits_{i=1}^{2}\frac{1}{\mu_{i}}\int_{\omega_{i}\times(0,T)} z\varphi^{i} dx dt \label{Eq58}.
 	\end{align}
 	In the same way, multiplying the  system  \eqref{Sys.28NS} by $(Z, \Psi^1, \Psi^2)$ and integrating by parts, we get
 	\begin{equation} \label{Eq59}
 	\int_{\omega_{i,d}\times(0,T)} \alpha_{i}\psi^i(y-y_{i, d})\,dx\,dt= -\frac{1}{\mu_{i}}\int_{\omega_{i}\times(0,T)} z\varphi^i \,dx\,dt  .
 	\end{equation}
 	If $Y_0=0$ and  $y_{1,d}=y_{1,d}=0$, by using \eqref{Eq58} and \eqref{Eq59}, we find
 	$$
 	\left\langle Y_1(T), Z_{T}\right\rangle=\int_{\omega\times(0,T)} z f\,dx\,dt.
 	$$
 	This proves that
 	\begin{equation*}
 	\mathcal{L}^{\star}_T(Z_T)= z 1_{\omega}.
 	\end{equation*}
 	If now $f=0$, using again \eqref{Eq58} and \eqref{Eq59}, we get
 	\begin{equation*}
 	\langle Y_2(T), Z(T)\rangle_{\mathbb{L}^{2}}=\langle Y_0, Z(0)\rangle_{\mathbb{L}^{2}}-\sum_{i=1}^{2} \alpha_{i} \int_{\omega_{i,d}\times (0,T)} y_{i, d} \psi^{i} d x d t.
 	\end{equation*}
 	That is
 	\begin{equation*}
 	\mathcal{R}_{T}^{\star}\left(Z_{T}\right)=\left(Z(0), -\alpha_{1} \rho^{-1} \psi^{1}, -\alpha_{2} \rho^{-1} \psi^{2}\right).
 	\end{equation*}
 	The relation between observability inequality \eqref{obin4.3}  and null controlability of the system \eqref{Sys.28NS}  follows from  Theorem 1.18  of \cite{WWZ18}.
 \end{remark}
 \section{Similar results for semilinear problems}\label{sectionsemli}
 In this section, following \cite{ArCaSa15, BCMO20}, and using  the results obtained in the linear case with a fixed point argument, we deduce similar results for the following  semilinear problem
 \begin{equation}
 \left\{\begin{array}{ll}
 {\partial_t y-\text{div}(\mathcal{A}\nabla y)  + F (y,\nabla y)= f1_{\omega}+ v_1 1_{\omega_1}+ v_2 1_{\omega_2} }& {\text { in } \Omega_T,} \\
 {\partial_t y_{_{\Gamma}}   - \text{div}_{\Gamma}(\mathcal{A}_{\Gamma}\nabla_{\Gamma} y_{\Gamma})    + \partial_{\nu}^{\mathcal{A}}y+ G (y_{\Gamma},\nabla_{\Gamma} y_{\Gamma})=0} &\,\,{\text {on} \,\Gamma_T,} \\
 {(y(0),y_{\Gamma}(0)) =(y_0,y_{\Gamma,0})} & {\text { in } \Omega \times \Gamma}, \label{nl2.62_}
 \end{array}\right.
 \end{equation}
 with  $F: (s, \zeta_1,...,\zeta_N )\longmapsto F(s, \zeta_1,...,\zeta_N )$  and    $G: (s, \zeta_1,...,\zeta_N )\longmapsto G(s, \zeta_1,...,\zeta_N )$ are in $C^1(\mathbb{R}\times \mathbb{R}^N)$ such that $F(0)=G(0)=0$. We assume furthermore that there exist $L_F , L_G>0$ such that
 \begin{align}
 |F(s,\zeta)-F(r,\zeta')|&\leq L_F(|s-r| + \|\zeta- \zeta'\|),\label{Lip2.63}\\
 |G(s,\zeta)-G(r,\zeta')|&\leq L_G(|s-r| + \|\zeta- \zeta'\|) \label{Lip2.64}
 \end{align}
 for all \,\,$s, r \in \mathbb{R}$\, and \,\,$\zeta, \zeta'\in \mathbb{R}^N$. \\
 Since the method is standard, we only give the main ideas.
 In the semilinear framework, the corresponding functionals  $J_1$ and  $J_2$ are not convex in general.
 For this reason, we must consider the following  weaker Nash equilibrium.
 \begin{definition} Let  $f\in L^2(0,T; L^2(\omega))$ be given. The pair $(v_1, v_2)$ is called Nash
 	quasi-equilibrium of $(J_1, J_2)$ if
 	\begin{equation*}
 	\left\{\begin{array}{ll}
 	{\frac{\partial J_1}{\partial v_1}(f; v^{\star}_1,v^{\star}_2)(v)=0 \quad \forall \, v\in  L^2(0,T;L^2( \omega_1)),} \\
 	{\frac{\partial J_2}{\partial v_2}(f; v^{\star}_1,v^{\star}_2)(w)=0 \quad \forall \, w\in  L^2(0,T;L^2( \omega_2)).}
 	\end{array}\right.
 	\end{equation*}
 \end{definition}
Introduce the following notations
\begin{align*}
&\tilde{ F}_1(y)=\partial_s F(y, \nabla y),\,\, \tilde{ F}_2(y)=(\partial_{\zeta_1}G(y, \nabla y),..., \partial_{\zeta_N}F(y, \nabla y)),\\
&\tilde{ G}_1(y)=\partial_s G(y, \nabla y),\,\, \tilde{ G}_2(y)=(\partial_{\zeta_1}G(y, \nabla y),..., \partial_{\zeta_N}F(y, \nabla y)).
\end{align*}
  Using the same ideas as in \cite{ArCaSa15}, the Nash quasi-equilibrium $(v^{\star}_1, v^{\star}_2)$ of $(J_1, J_2)$ is given by
\begin{equation}\label{chara.1_}
v^{\star}_i=-\frac{1}{\mu_i} \varphi^i|_{\omega_i\times(0,T)} \quad \text{for} \,\,i=1,2,
\end{equation}
where $(\varphi^1, \varphi^2)$ satisfies
{\tiny\begin{equation*}
	\left\{\begin{array}{ll}
	{-\partial_t \varphi^i-\mathrm{div}(\mathcal{A}\nabla \varphi^i)- \mathrm{div}(\varphi^i \tilde{ F}_2(y)) + \tilde{ F}_1(y)\varphi^i=\alpha_i(y-y_{i,d})1_{\omega_{i,d}} }& {\text { in } \Omega_{T},} \\
	{-\partial_t \varphi^i_{_{\Gamma}}   -\mathrm{div}_{\Gamma}(\mathcal{A}_{\Gamma}\nabla_{\Gamma} \varphi_{\Gamma}^i)- \mathrm{div}_{\Gamma}(\varphi_{\Gamma}^i\tilde{G}_2(y_{\Gamma}))+ \varphi^i_{\Gamma} \tilde{ F}_2(y)\cdot \nu + \partial^{\mathcal{A}}_{\nu}\varphi^i + \tilde{ G}_1(y_{\Gamma})\varphi^i_{\Gamma}=0}& {\text { on } \Gamma_{T},} \\
	{(\varphi^i(T),\varphi^i_{\Gamma}(T)) =0}& {\text { in } \Omega \times \Gamma,}\\
	i=1,2,
	\end{array}\right.
	\end{equation*}}Accordingly, we have the next optimality system
{\tiny\begin{equation}\label{Eq25S0_}
	\left\{\begin{array}{ll}
	{\partial_t y-\text{div}(\mathcal{A}\nabla y)  + F (y,\nabla y)= f1_{\omega}+ v_1 1_{\omega_1}+ v_2 1_{\omega_2} }& {\text { in } \Omega_T,} \\
	{\partial_t y_{_{\Gamma}}   - \text{div}_{\Gamma}(\mathcal{A}_{\Gamma}\nabla_{\Gamma} y_{\Gamma})    + \partial_{\nu}^{\mathcal{A}}y+ G (y_{\Gamma},\nabla_{\Gamma} y_{\Gamma})=0} &\,\,{\text {on} \,\Gamma_T,} \\
	{-\partial_t \varphi^i-\mathrm{div}(\mathcal{A}\nabla \varphi^i)- \mathrm{div}(\varphi^i \tilde{ F}_2(y)) + \tilde{ F}_1(y)\varphi^i=\alpha_i(y-y_{i,d})1_{\omega_{i,d}} }& {\text { in } \Omega_{T},} \\
	{-\partial_t \varphi^i_{_{\Gamma}}   -\mathrm{div}_{\Gamma}(\mathcal{A}_{\Gamma}\nabla_{\Gamma} \varphi_{\Gamma}^i)- \mathrm{div}_{\Gamma}(\varphi_{\Gamma}^i\tilde{G}_2(y_{\Gamma}))+ \varphi^i_{\Gamma} \tilde{ F}_2(y)\cdot \nu + \partial^{\mathcal{A}}_{\nu}\varphi^i + \tilde{ G}_1(y_{\Gamma})\varphi^i_{\Gamma}=0}& {\text { on } \Gamma_{T},} \\
	{(y(0),y_{\Gamma}(0)) =(y_0,y_{\Gamma,0})} & {\text { in } \Omega \times \Gamma},\\
	{(\varphi^i(T),\varphi^i_{\Gamma}(T)) =0}& {\text { in } \Omega \times \Gamma,}\\
	i=1,2.
	\end{array}\right.
	\end{equation}}Using the same ideas as in the linear case, we can prove the null controllablity of the optimality system \eqref{Eq25S0_}.  Then, by  some well-known fixed point arguments, we deduce the same results for the semilinear system \eqref{nl2.62_}.  More precisely, we have the next theorem.
\begin{theorem}Under the same assumptions of Theorem \ref{th4.1SN}, there exists a positive  weight function $\rho =\rho(t)$ blowing up at $t=T$  such that for every $y_{i,d}\in L^2(0,T; L^2(\omega_i))$ satisfying \eqref{Ass11SN}
	and  every $Y_0=(y_0,y_{\Gamma, 0} )\in\mathbb{L}^2$,  there  exist  controls $\hat{f}\in L^2(0,T;L^2(\omega))$ and  associated Nash quasi-equilibrium $(v^{\star}_1,v^{\star}_2)\in L^2(0,T;\break L^2(\omega_1))\times L^2(0,T;L^2(\omega_2))$
	such that the corresponding solution to \eqref{nl2.62_}   satisfies  $Y(T, \hat{f}, v^{\star}_1, v^{\star}_2)=0$.
\end{theorem}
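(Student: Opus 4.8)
The plan is to linearize the semilinear system by freezing the state inside the nonlinear terms and to recover a solution of the nonlinear controllability problem as a fixed point of the resulting controlled-solution map, exactly in the spirit of \cite{ArCaSa15, BCMO20}. The global Lipschitz assumptions \eqref{Lip2.63}--\eqref{Lip2.64} together with $F(0)=G(0)=0$ are precisely what make this strategy succeed: they guarantee that the frozen coefficients remain uniformly bounded in $L^\infty$, so that the linear theory applies with a control cost that does not depend on the point about which one linearizes.

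Concretely, for a fixed $\bar y$ (taken in $L^2(0,T;\mathbb{H}^1)$), I would write, using the fundamental theorem of calculus and $F(0)=0$,
$$F(\bar y,\nabla\bar y)=a_{\bar y}(x,t)\,\bar y+B_{\bar y}(x,t)\cdot\nabla\bar y,\qquad a_{\bar y}=\int_0^1\partial_s F(\sigma\bar y,\sigma\nabla\bar y)\,d\sigma,$$
and similarly $B_{\bar y}=\int_0^1\partial_\zeta F(\sigma\bar y,\sigma\nabla\bar y)\,d\sigma$, with analogous potentials $b_{\bar y},B_{\Gamma,\bar y}$ built from $G$ on $\Gamma$. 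By \eqref{Lip2.63}--\eqref{Lip2.64} these satisfy $\|a_{\bar y}\|_\infty,\|B_{\bar y}\|_\infty\le L_F$ and $\|b_{\bar y}\|_\infty,\|B_{\Gamma,\bar y}\|_\infty\le L_G$, uniformly in $\bar y$. With this frozen state the system \eqref{nl2.62_} takes exactly the form \eqref{Sys1.1SN}, so Theorem \ref{th4.1SN} provides a leader $\hat f(\bar y)$ of minimal norm, an associated Nash equilibrium $(v_1^\star,v_2^\star)$, and a state $Y$ with $Y(T)=0$. Crucially, the cost estimate \eqref{EsCt47} and the observability constant of Proposition \ref{Pro4.2} depend only on $\Omega,\omega,T$ and the $L^\infty$ bounds of the coefficients, hence are uniform in $\bar y$.

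Next, I would define the map $\Lambda:\bar y\mapsto y$, where $y$ is the first component of the controlled trajectory just constructed, and seek a fixed point. The uniform bound \eqref{EsCt47} on $\hat f$ and $(v_1^\star,v_2^\star)$ yields, through the well-posedness estimate \eqref{4.3} and parabolic regularity, a uniform bound on $Y$ in $\mathbb{E}_1=H^2(0,T;\mathbb{L}^2)\cap L^2(0,T;\mathbb{H}^2)$. Since $\mathbb{E}_1$ embeds compactly into $L^2(0,T;\mathbb{H}^1)$ by an Aubin--Lions argument, $\Lambda$ maps a large closed ball of that space into itself and is compact; its continuity follows from the continuous dependence of the frozen coefficients on $\bar y$ together with the stability of the HUM minimizer. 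A Schauder (or Leray--Schauder) fixed-point theorem then produces $y=\Lambda(y)$, and by construction the corresponding controls steer \eqref{nl2.62_} to $Y(T)=0$, with the quasi-equilibrium characterized by \eqref{chara.1_}.

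The main obstacle is establishing the continuity (and single-valuedness) of $\Lambda$: the leader control is obtained as an $\epsilon\to 0^+$ limit of minimizers of the penalized functionals $J_\epsilon$, so one must check that the minimal-norm control depends continuously on the frozen coefficients $a_{\bar y},B_{\bar y},b_{\bar y},B_{\Gamma,\bar y}$. This reduces to the continuous dependence of the solutions of the adjoint optimality system \eqref{Sys.29NS} on these $L^\infty$ coefficients, uniformly in the weights, which in turn rests on the uniformity of the Carleman and observability constants already noted. Should continuity of a single-valued map prove delicate, the standard alternative is to work with the set-valued solution map and invoke Kakutani's theorem, convexity of the fibers being ensured by the linearity of the frozen problem.
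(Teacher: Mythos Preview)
Your overall strategy---freeze the state, apply the linear result, then close by a Schauder-type fixed point with uniform $L^\infty$ bounds on the frozen potentials---is exactly the route the paper sketches, and all the compactness and continuity considerations you raise are the right ones. There is, however, one genuine gap in the way you set up the frozen problem.

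When you freeze $\bar y$ and write $F(\bar y,\nabla\bar y)=a_{\bar y}\bar y+B_{\bar y}\cdot\nabla\bar y$ with the integral-mean potentials $a_{\bar y}=\int_0^1\partial_sF(\sigma\bar y,\sigma\nabla\bar y)\,d\sigma$, $B_{\bar y}=\int_0^1\partial_\zeta F(\sigma\bar y,\sigma\nabla\bar y)\,d\sigma$, you obtain a linear state equation of type \eqref{Sys1.1SN}, and Theorem \ref{th4.1SN} then hands you a leader together with a Nash equilibrium $(v_1^\star,v_2^\star)$ for that \emph{linear} problem. The characterization of this Nash equilibrium is $v_i^\star=-\mu_i^{-1}\varphi^i$ with $\varphi^i$ solving the adjoint system \eqref{Eq25S} whose potentials are the \emph{same} $a_{\bar y},B_{\bar y}$. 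But the Nash quasi-equilibrium for the semilinear problem is characterized by \eqref{chara.1_}, where the adjoint carries the \emph{pointwise} derivatives $\tilde F_1(y)=\partial_sF(y,\nabla y)$, $\tilde F_2(y)=\partial_\zeta F(y,\nabla y)$ (and the analogous $\tilde G_i$ on $\Gamma$). These do not coincide with $a_y,B_y$ unless $F$ and $G$ are affine. Consequently, at a fixed point $y=\bar y$ of your map $\Lambda$, the state indeed solves \eqref{nl2.62_} and is driven to zero, but the followers you produce are \emph{not} a Nash quasi-equilibrium of $(J_1,J_2)$: they satisfy the first-order condition of the wrong (frozen-linear) functionals.

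The fix, which is what the paper means by ``prove the null controllability of the optimality system \eqref{Eq25S0_}'', is to linearize the \emph{whole} optimality system rather than only the state equation: freeze $\bar y$, put $a_{\bar y},B_{\bar y}$ (and their boundary counterparts) in the $y$-equation, but put $\tilde F_1(\bar y),\tilde F_2(\bar y),\tilde G_1(\bar y_\Gamma),\tilde G_2(\bar y_\Gamma)$ in the $\varphi^i$-equations. This yields a linear coupled forward--backward system where the potentials in the forward and backward parts differ; both sets are bounded by $L_F,L_G$ thanks to \eqref{Lip2.63}--\eqref{Lip2.64}. The Carleman and observability machinery of Section \ref{S.4} goes through verbatim for this system (the constants depend only on the $L^\infty$ norms, not on the potentials themselves coinciding), so you obtain null controllability with a uniform cost. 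The fixed point of the corresponding map then solves \eqref{Eq25S0_}, and now $v_i^\star=-\mu_i^{-1}\varphi^i$ is automatically the quasi-equilibrium \eqref{chara.1_}. All the rest of your argument (uniform estimates, Aubin--Lions compactness, continuity of the HUM control, Schauder or Kakutani) applies unchanged to this corrected map.
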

 \section{Conclusion}
 In this paper we have studied a hierarchical control problem of heat equation with general dynamic boundary  conditions. Following the Stackelberg-Nash  strategy with one leader and two followers, we have proved, for each fixed leader, the existence and uniqueness of a Nash-equilibrium, and by means of an adjoint system, we have characterized the Nash-equilibrium, and we have  deduced an optimality system. By  suitable Carleman estimates, we have established an observability inequality which is  the key to deduce our  controllability result. The similar results are also obtained for  the  semilinear system.

\medskip
Received December 2020; revised June 2021.
\medskip

\end{document}